\numberwithin{equation}{section}
\DeclareSymbolFont{symbolsC}{U}{txsyc}{m}{n}
\DeclareMathSymbol{\diamonddot}{\mathord}{symbolsC}{144} 
\DeclareMathSymbol{\diamondopen}{\mathord}{symbolsC}{94} 
\renewcommand{\Diamond}{\mathop\diamondopen} 
\newcommand{\DIAMOND}{\mathop\diamonddot} 
\newcommand{\BOX}{\mathop\boxdot}  
\renewcommand{\Box}{\mathop\square} 
\newtheorem{theorem}{Theorem}[section]
\newtheorem{lemma}[theorem]{Lemma}
\newtheorem{corollary}[theorem]{Corollary}
\newtheorem{proposition}[theorem]{Proposition}
\theoremstyle{definition}
\newtheorem{remark}[theorem]{Remark}
\newtheorem{example}[theorem]{Example}
\newcommand{\TABS}{%
\qquad\qquad\qquad\qquad\qquad\qquad\qquad\qquad\qquad\= \kill}
\newcommand{\FUSE}{\ensuremath{\otimes}}%
\newcommand{\PLUS}{\ensuremath{\oplus}}
\begin{document}

\title[Characterizing intermediate tense logics]{Characterizing intermediate tense logics in terms of Galois connections}

\author[W.~Dzik]{Wojciech Dzik}
\address{W.~Dzik, Institute of  Mathematics, University of Silesia, ul.~Bankowa 12,~40-007 Katowice, Poland}
\email{wojciech.dzik@us.edu.pl}

\author[J.~J{\"a}rvinen]{Jouni J{\"a}rvinen}
\address{J.~J{\"a}rvinen, Sirkankuja 1, 20810~Turku, Finland}
\email{Jouni.Kalervo.Jarvinen@gmail.com}

\author[M.~Kondo]{Michiro Kondo}
\address{M.~Kondo, School of Information Environment, Tokyo Denki University, Inzai, 270-1382, Japan}
\email{mkondo@mail.dendai.ac.jp}

\begin{abstract}
We propose a uniform way of defining for every logic {\sf L} intermediate between intuitionistic and classical logics, 
the corresponding intermediate tense logic {\sf LK$_t$}. This is done by building the fusion of two copies of 
intermediate logic with a Galois connection {\sf LGC}, and then interlinking their operators by two Fischer Servi axioms.  
The resulting system is called {\sf L2GC+FS}. In the cases of intuitionistic logic {\sf Int} and classical logic {\sf Cl}, 
it is noted that {\sf Int2GC+FS} is syntactically equivalent to intuitionistic tense logic {\sf IK$_t$} by W.~B.~Ewald 
and {\sf Cl2GC+FS} equals classical tense logic {\sf K$_t$}. This justifies calling {\sf L2GC+FS} the {\sf L}-tense logic 
{\sf LK$_t$} for any intermediate logic {\sf L}. We define H2GC+FS-algebras as expansions of HK1-algebras, 
introduced by E.~Or{\l}owska and I.~Rewitzky. For each intermediate logic {\sf L}, we show algebraic completeness of 
{\sf L2GC+FS} and its conservativeness over {\sf L}. We  prove relational completeness of {\sf Int2GC+FS} with respect to the models 
defined on {\sf IK}-frames introduced by G.~Fischer Servi. We also prove a representation theorem stating that every H2GC+FS-algebra 
can be embedded into the complex algebra of its canonical {\sf IK}-frame.
\end{abstract}

\maketitle

\section{Introduction} \label{Section:Intro}

In this paper, we consider the following  method of introducing unary operators to intuitionistic propositional logic: 
\begin{enumerate}[\rm (A)]
\item Building the fusion {\sf IntGC}\FUSE{\sf IntGC} of two copies of intuitionistic logic with a Galois connection {\sf IntGC}, 
the first one with a Galois connection $({\Diamond},{\BOX})$ and the second one with $({\DIAMOND},{\Box})$, and adding Fischer Servi axioms 
to connect  $({\Diamond},{\Box})$ and $({\DIAMOND},{\BOX})$. 
\end{enumerate}
Another method of introducing unary operators leading to intuitionistic tense logic was investigated by J.~M.~Davoren \cite{Davoren}: 
\begin{enumerate}[\rm (B)]
\item Building the fusion {\sf IK}\FUSE{\sf IK} of two copies of intuitionistic modal logic {\sf IK}, the first one with modalities $({\Diamond},{\Box})$ 
and the second one with $({\DIAMOND},{\BOX})$, and adding Brouwerian axioms to connect $({\Diamond},{\BOX})$ and $({\DIAMOND},{\Box})$.
\end{enumerate}
These two methods are shown here to be equivalent and the result is called {\sf Int2GC+FS}, according to (A). This name should be understood
as ``intuitionistic logic with two Galois connections combined using Fischer Servi axioms''.

Note that for combinations of modal logics, we follow the notation of \cite{Davoren}. If $\mathcal{L}_1$ and $\mathcal{L}_2$ are axiomatically presented modal 
logics in languages $\Lambda_1$ and $\Lambda_2$, respectively, then the fusion $\mathcal{L}_1$\FUSE$\mathcal{L}_2$ is the smallest multi-modal logic in the 
language $\Lambda_1$\FUSE$\Lambda_2$ containing $\mathcal{L}_1$ and $\mathcal{L}_2$, and closed under all the inference rules of  $\mathcal{L}_1$ and $\mathcal{L}_2$, 
where $\Lambda_1$\FUSE$\Lambda_2$  denotes the smallest common extension of the languages  $\Lambda_1$ and $\Lambda_2$. If $\mathcal{L}$ is a logic in language 
$\Lambda$, and $\Gamma$ is a finite set of schemes in $\Lambda$, then the extension $\mathcal{L} \PLUS \Gamma$ is the smallest logic in $\Lambda$ extending
$\mathcal{L}$, containing the schemes in $\Gamma$ as additional axioms, and closed under the rules of $\mathcal{L}$. 

If $\Diamond$, $\Box$  and  $\DIAMOND$, $\BOX$  are identified with tense operators $F$, $G$ (future) and $P$, $H$ (past), respectively,
the system {\sf Int2GC+FS} is equivalent to the known system {\sf IK$_t$}, called \emph{intuitionistic tense logic},
introduced by W.~B.~Ewald \cite{Ewald86}.\footnote{This equivalence was proved already in \cite{DzJaKo12A}.}
The logic {\sf IK$_t$} is generally taken as the intuitionistic counterpart of the \emph{classical tense logic} {\sf K$_t$} 
(see \cite{Davoren, simpson1994proof}, for instance) and we will 
neither discuss this fact here nor consider the philosophical issues raised by {\sf IK$_t$} (for instance, its constructivity).  
We would also like to emphasize that this is not a matter of providing another list of axioms for {\sf IK$_t$} 
that is much shorter than the Ewald's list of axioms. 
Note that the logic {\sf K$_t$} is often in the literature called the \emph{minimal tense logic}. Since we consider only the 
minimal tense (classical,  intuitionistic, intermediate) logics, we will omit the word ``minimal'' in the rest of the paper.
Methods (A) and (B) are visualized in Figure~\ref{Fig:figure1}.
\begin{figure}[h] \label{Fig:figure1} \centering
\includegraphics[width=100mm]{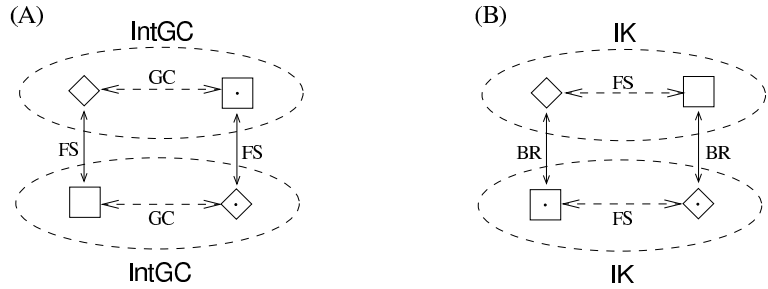}
\caption{\small Two methods of building {\sf IK$_t$}. Here FS stands for the Fischer Servi axioms, GC for Galois connections, 
and BR for the Browerian axioms.}
\end{figure}

The above equivalences also hold if one changes the basic logic from intuitionistic to classical, in which case one gets 
classical tense logic {\sf K$_t$}. Adopting approach (A) from intuitionistic and classical logics to any intermediate 
logic {\sf L}, we present a method to obtain the corresponding logic {\sf L2GC+FS}. This is done simply by adding to 
{\sf L} two Galois connections (by means of the appropriate rules), or by building the fusion
{\sf LGC}\FUSE{\sf LGC} of two copies of intermediate logic  with a Galois connection {\sf LGC}, and then interlinking their operators 
by two Fischer Servi axioms.  We prove algebraic completeness for {\sf L2GC+FS} and show that it is conservative over {\sf L}. 
We also give facts justifying why {\sf L2GC+FS} can be considered as an \emph{intermediate {\sf L}-tense logic} {\sf LK$_t$}.

There are several advantages of approach (A) over (B). The {\sf L}-tense logic {\sf LK$_t$} (or equivalently {\sf L2GC+FS}) 
can be uniformly built for every intermediate logic {\sf L}, without entering the problem of what is the modal version {\sf LK} of {\sf L},  
since the ``modal part'' is  provided solely by the Galois connections, and the Fischer Servi axioms make a duality-like connection between the
operators. For a given intermediate logic {\sf L}, it is often not clear what is its modal analogue {\sf LK} (between {\sf IK} and {\sf K}). 
For instance, it took several years to find out, what is G{\"o}del modal logic. In \cite{Caicedo10}, strong completeness of the $\Box$-version 
and the $\Diamond$-version of G{\"o}del modal logic were proved. Recent studies \cite{Caicedo12, Caicedo13} show that there are several G{\"o}del 
modal logics of two modalities which are defined by a Kripke frame semantics. In particular, G{\"o}del modal logics  are different for ``crisp'' 
frames and for ``fuzzy'' frames. Moreover, approach (A) allows a uniform treatment of algebraic semantics.

Galois connections play a central role both in (A) and (B) -- and in the whole paper, hence we recall some well-known properties of order-preserving Galois 
connections used here. They can be found in \cite{ErKoMeSt93}, for instance. Let $\varphi \colon P \to Q$ and $\psi \colon Q \to P$ be maps between
ordered sets $P$ and $Q$. The pair $(\varphi,\psi)$ is a \emph{Galois connection} between $P$ and $Q$, if for all $p \in P$ and $q \in Q$,
\[
 \varphi(p) \leq q  \iff p \le \psi(q).
\]
An equivalent characterisation states that a pair $(\varphi,\psi)$ forms a Galois connection between $P$ and $Q$ if and only if
\begin{align}
&\text{$p \leq \psi (\varphi (p))$ for all $p \in P$ and $\varphi ( \psi(q)) \leq q$ for all $q \in Q$;} \label{EQ:GC1}\\
&\text{the maps $\varphi$ and $\psi$ are order-preserving.} \label{EQ:GC2}
\end{align}

It is well known that  Galois connections can be created by any relational frame $(U,R)$ by reversing the relation $R$.  
The operators $\Diamond$ and $\Box$ defined for all $X \subseteq U$ by
$\Diamond X = \{ x \in U \mid (\exists y \in U)\, x \, R \, y \ \& \ y \in X \}$ and 
$\Box X = \{x \in U \mid (\forall y \in U)\, x\, R \, y \Rightarrow y \in X \}$
are both part of a Galois connection.
The Galois connections in question on the powerset lattice $\wp(U)$ are then
$({\Diamond},{\BOX})$ and $({\DIAMOND},{\Box})$, where the operators
$X \mapsto \DIAMOND X$ and $X \mapsto \BOX X$ are defined by inverting
the relation $R$. However, the idea of extending propositional calculus with a Galois 
connection as modalities appears to be rather new, and mainly motivated by applications 
in computer science. There is a growing interest in the study of Galois connections 
as modalities, as can be seen in the recent surveys by M.~Menni and C.~Smith
\cite{Menni2013} and Garc\'ia-Pardo \textit{et al.}\@ \cite{GarciaPardo2013}.
The study of Galois connections can be traced back to the initial works of O.~Ore \cite{Ore44} and  B.~J{\'o}nsson and A.~Tarski \cite{JoTa51}. 
More recent studies of Galois connections as modal operators in complete lattices can be found, for instance, 
in  \cite{vonKarger95}, where B.~{von~Karger} developed several temporal logics 
from the theory of complete lattices, Galois connections, and fixed points, and in
\cite{JaKoKo06}, where Galois connections, conjugate functions, and their 
fixed points are considered in complete Boolean lattices.

In ``syntactical side'', Galois connections can be subsumed into a logic only either
by including Galois connection rules (see page~\pageref{Rule:GC}) or by introducing
Browerian axioms (see page~\pageref{Eq:Brower}). However, in ``semantical side'', the
situation is different in the sense that, for instance, for a complete lattice $(L,\leq)$, a mapping 
$f \colon L \to L$ is known to be a part of a Galois connection if and only if $f$ is a complete join-morphism, that is, 
$f (\bigvee S) = \bigvee f(S)$ for all $S \subseteq L$. In such a case, the ``other part'' $g$ is defined by 
$g(a) := \max \{ a \in L \mid f(a) \leq b\}$. This means, for example, that in a finite lattice $(L,\vee,\wedge)$, 
every additive and normal map $L \to L$ induces a Galois connection. In relational settings, Galois
connections are essentially related to inverting a relation; if a possibility-like
operator is defined in terms of a relation (or a composition of relations) by an ``exists''-condition, 
then its adjoint operator is defined simply by a ``for all''-condition and the inverse the original relation 
(or the inverse of the composition of relations). Similar kind of situation can be observed  for ``categorical functors'', 
and a functor is known to have a left adjoint if and only if it is continuous and a certain ``smallness condition'' is satisfied.
Note that every partially ordered set can be viewed as a category in a natural way: there is a unique morphism from $x$ to $y$ if and only 
if $x \leq y$. Thus, an order-preserving Galois connection is a pair of adjoint functors between two categories that arise from 
partially ordered sets.

In the literature can be found several papers that consider modalities as adjoint pairs. Topos-theoretic approaches to modality are presented 
by G.~E.~Reyes and H.~Zolfaghari in \cite{ReyZolfa91}, with adjoint pair $({\Diamond}, {\Box})$, 
and {\sf S4}-like axioms satisfied by $\Box$ and $\Diamond$ separately.  In \cite{ReyZawad91}, G.~E.~Reyes and M.~W.~Zawadowski 
developed this theory further in the context of locales, giving  axiomatisation, completeness and decidability of modal 
logics arising in this context. More recently, M.~Sadrzadeh and R.~Dyckhoff studied in \cite{SadDyc10} positive logic whose 
nonmodal part has conjunction and disjunction as connectives, and whose modalities come in adjoint pairs.

In categorical models, propositions are interpreted as the objects of a category and proofs as morphisms. For instance, 
P.~N.~Benton considers in \cite{Benton95} so-called {\sf LNL}-models, which are categorical models for intuitionistic linear logic as defined by Girard.
Benton studies also rules for {\sf LNL} which are similar to our Galois connection rules.
In \cite{BiermanPaiva99}, G.M.~Bierman and V.~de~Paiva consider an intuitionistic variant {\sf IS4} of the modal logic {\sf S4} and its models 
in the framework of category theory.  Alechina \emph{et al.}\@  study in \cite{alechina} two systems of constructive modal logic which are 
computationally motivated. These logics are ``Constructive S4'' and ``Propositional Lax Logic''. They
prove duality results which show how to relate Kripke models to algebraic models, and these in turn to the appropriate categorical models.
Our work is based on algebraic and Kripke semantics, and since we consider minimal intermediate tense logics, we do not assume additional modal axioms. 
Hence, we do not follow the categorical proposal of modelling constructive {\sf S4}-modalities that uses the additional axioms {\sf T} and {\sf 4}.
The difference between systems applying constructive {\sf S4}-modalities and ours is similar to the difference between classical tense logic 
and classical {\sf S4}.

This paper continues our study of Galois connections in intuitionistic logic. 
In \cite{DzJaKo10}, we introduced intuitionistic propositional logic
with a Galois connection $({\Diamond},{\BOX})$, called {\sf IntGC}. We showed that
${\Diamond}$ and ${\BOX}$ are \emph{modal operators} in the sense that ${\Diamond}$ distributes over $\vee$ 
(that is, is \emph{additive}) and preserves $\perp$ (that is, is \emph{normal})
and ${\BOX}$ distributes over $\wedge$ (i.e., is \emph{multiplicative}) and preserves
$\top$ (i.e., is \emph{co-normal}). We gave both algebraic and relational
semantics, and showed that {\sf IntGC} is complete with respect to  both of
these semantics. We noted that {\sf IntGC} is conservative over intuitionistic logic and 
that Glivenko's Theorem does not hold between propositional logic with a Galois connection
\cite{JaKoKo08} and {\sf IntGC}.
In addition, in \cite{DzJaKo12} we proved that {\sf IntGC} has the finite model property,
which enabled us to state that a formula of {\sf IntGC} is provable if and only if
it is valid in any finite distributive lattice with an additive and normal
operator, or equivalently, the formula is valid in any finite distributive lattice with a 
multiplicative and co-normal operator. With respect to relational semantics,
this is equivalent to the validity in any finite relational models for {\sf IntGC}.
We also presented how {\sf IntGC} is motivated by generalised fuzzy sets. 
In \cite{DzJaKo14}, we gave representations of expansions of bounded distributive lattices equipped 
with a Galois connection.
We studied in \cite{DzJaKo12A} two Galois connections in intuitionistic logic and then with Fischer Servi axioms added,  
their algebraic and relational semantics. We announced there some results that are presented here. 
In a similar way, in this work we extend {\sf IntGC} with a Galois connection $(\Diamond,\BOX)$ by adding another Galois connection 
pair $(\DIAMOND,\Box)$. Just adding another Galois connection does not change much, we have
{\sf IntGC} ``doubled'', called here {\sf Int2GC}. Note that  {\sf Int2GC} is the same as the
fusion {\sf IntGC}\FUSE{\sf IntGC}.
One of the motivating questions of this paper is: What axioms connecting two independent Galois connections 
$(\Diamond,\BOX)$ and $(\DIAMOND,\Box)$ should be added to obtain intuitionistic (or intermediate) tense logic?

In classical logic, the operators $\Diamond$ and $\Box$ may be defined as a shorthand of each 
other by using the following \emph{De Morgan definitions}:
\begin{equation}\label{Eq:DeMorganDef}
 \Box A := \neg \Diamond \neg A \quad \text{ and } \quad \Diamond A := \neg \Box \neg A.
\end{equation}
Classical tense logic {\sf K$_t$} can be obtained by adding to classical logic two Galois connections 
$(\Diamond, \BOX)$ and $(\DIAMOND, \Box)$, and then connecting them by the following \emph{De~Morgan axioms}:
\begin{equation}\label{Eq:DeMorganDual1}
\Box A \leftrightarrow \neg \Diamond \neg A \quad \text{ and } \quad \BOX A \leftrightarrow \neg \DIAMOND \neg A,
\end{equation}
or 
\begin{equation}\label{Eq:DeMorganDual2}
\Diamond A \leftrightarrow  \neg \Box \neg A \quad \text{ and } \quad \DIAMOND A \leftrightarrow  \neg \BOX \neg A 
\end{equation}
Note that in the case of classical logic, the formulas in \eqref{Eq:DeMorganDual1} are equivalent to the ones in 
\eqref{Eq:DeMorganDual2}. In a more concise way, {\sf K$_t$} may be determined by adding to classical 
logic one Galois connection $(\Diamond, \BOX)$, and then defining the second one $(\DIAMOND, \Box)$ in terms of 
\eqref{Eq:DeMorganDef}, that is, by setting $\DIAMOND A := \neg \BOX \neg A$ and $\Box A := \neg \Diamond \neg A$. 
This approach is present in  \cite[Proposition~8.5(iii)]{Venema2007} and also, in another, independent way, 
in \cite{JaKoKo08}.

However, if one changes the base logic from 
classical to intuitionistic, or algebraically from Boolean to Heyting algebras, these kinds of 
ways cannot be used, because they lead to serious faults and fallacies. In particular, having a Galois connection 
$({\Diamond},{\BOX})$, if one defines the operators  $\Box$ and $\DIAMOND$ by using \eqref{Eq:DeMorganDef} 
with intuitionistic negation, the resulting pair $({\DIAMOND},{\Box})$ does not form a Galois connection; 
see \cite{DzJaKo10}. In another similar approach \cite{chajda11} (without using the term Galois connection, but providing 
the equivalent axiomatisation), the assertions \eqref{Eq:DeMorganDef} are used to define ``possibility-like'' 
tense operators $F$, $P$, over intuitionistic logic, from ``necessity-like'' tense operators $G$, $H$. 
It is claimed in  \cite{chajda11} that the resulting logic is intuitionistic  tense logic and 
that the ``possibility-like'' tense operators $F$, $P$ are ``existential quantifiers'' (see \cite[Remark~8]{chajda11}) 
meaning, in particular, that $F$, $P$ preserve disjunctions (that is, lattice-joins). Showing that this is not true is the topic 
of \cite{figallo2012remarks}. Note also that in Example~\ref{Exa:Motivation}(c) we show that in $\mbox{\sf Int2GC+FS} = \mbox{\sf IK$_t$}$, 
formulas \eqref{Eq:DeMorganDual1} and \eqref{Eq:DeMorganDual2} are not provable.

By the above, it is clear that De~Morgan axioms \eqref{Eq:DeMorganDual1} and \eqref{Eq:DeMorganDual2} are not appropriate 
for connecting modalities over intuitionistic logic due to the properties of intuitionistic negation. Our answer to 
the above question on intuitionistic logic level is to link the operators $\Diamond$, $\Box$ and $\DIAMOND$, $\BOX$,
respectively, by using the ``connecting axioms''
\[
 \Diamond( A \to\ B) \to (\Box A \to \Diamond B) \text{ \ and \ } 
(\Diamond A \to \Box B) \to \Box(A \to B) 
\]
introduced by G.~Fischer Servi in \cite{FishServ84}. Note that in these axioms, negation is not involved. To define
{\sf Int2GC+FS}, the two Galois connections $(\Diamond,\BOX)$ and $(\DIAMOND,\Box)$ of  {\sf Int2GC} are interlinked with the axioms: 
\begin{tabbing}\TABS\label{Eq:FS}%
({FS}1) $\Diamond(A \to B) \to (\Box A \to \Diamond B)$ \>({FS}2) $\DIAMOND(A \to B) \to (\BOX A \to \DIAMOND B)$ \\
({FS}3) $(\Diamond A \to \Box B) \to \Box(A \to B)$\>({FS}4) $(\DIAMOND A \to \BOX B) \to \BOX(A \to B)$ 
\end{tabbing}
We will show that in {\sf Int2GC}, axioms (FS1) and (FS4) are equivalent, and the same holds for (FS2) and (FS3), meaning that 
we have some equivalent combinations of axioms to define {\sf Int2GC+FS}, and thus also {\sf IK$_t$}.

Another way of connecting two independent Galois connections, if one moves from Boolean to distributive lattices, is based on J.~M.~Dunn's axioms. 
These axioms connect modalities in positive modal logic.   
In \cite{Dunn94}, Dunn studied distributive lattices with two modal operators $\Box$ and $\Diamond$ and introduced conditions 
\begin{equation} \label{Eq:Dunn}
 \Diamond x \wedge \Box y \leq \Diamond( x \wedge y) \qquad \text{and} \qquad
\Box(x \vee y) \leq \Box x \vee \Diamond y
\end{equation}
for the interactions between $\Box$ and $\Diamond$. We use only the first of them, the second is false in {\sf IK$_t$}. 
In fact, in Heyting algebras with two Galois connections, the conditions of \eqref{Eq:Dunn}  are independent of each other.
One obtains a logic equivalent to {\sf IK$_t$} by adding to {\sf Int2GC} axioms corresponding to the first condition of 
\eqref{Eq:Dunn} applied to the pairs $({\Box},{\Diamond})$  and $({\BOX},{\DIAMOND})$.%
\footnote{Added in proof: after sending the first version of this paper to the editors in 2012 we learned that a similar 
result applying Dunn's axiom was presented in \cite{Menni2013}, appearing while our paper was in reviewing process, see also \cite{DzJaKo12A}.}  
The axioms are ``positive'' -- negation is not present in distributive lattices. 
One may say that the role of linking two Galois connections played by De~Morgan axioms in classical logic is taken by Fischer Servi 
axioms or by (positive) Dunn's axioms, in intuitionistic logic and, more general, in intermediate logics.
 
The next motivation of the paper is to show completeness of the logic for both algebraic and relational semantics, and to find a
representation theorem for Heyting algebras with Galois connections  via relational intuitionistic-modal frames. 
We consider H2GC+FS-algebras, which are algebras $(H,\vee,\wedge,\to,0,1,\Diamond,\Box,\DIAMOND,\BOX)$ such that
$(H,\vee,\wedge,\to,0,1,\Diamond,\BOX)$ and $(H,\vee,\wedge,\to,0,1,\DIAMOND,\Box)$
are HGC-algebras modelling {\sf IntGC} \cite{DzJaKo10},  and $(H,\vee,\wedge,\to,0,1,\Diamond,\Box)$ and
$(H,\vee,\wedge,\to,0,1,\DIAMOND,\BOX)$ are so-called HK1-algebras introduced
by E.~Or{\l}owska and I.~Rewitzky in \cite{OrlRew07}. We note that {\sf Int2GC+FS} is complete with respect to
H2GC+FS-algebras, and we generalise this result to completeness of the logic {\sf L2GC+FS}
for any intermediate logic {\sf L}, with respect to  {\sf L}-Heyting algebras extended with two Galois connection pairs interlinked
with Fischer Servi axioms. We also note that calculating using Heyting algebras with operators
is much easier than calculating with categories, and calculating with algebras can be
easily used in showing some of the non-theorems, for instance, that all Dunn's axioms~\eqref{Eq:Dunn}
are not true in {\sf IK$_t$}.

We recall {\sf IK}-frames from \cite{FishServ84} and show that {\sf Int2GC+FS} is complete with respect 
to the models defined on {\sf IK}-frames. In addition, we prove a representation theorem for H2GC+FS-algebras: 
every H2GC+FS-algebra can be embedded into the complex algebra of its canonical {\sf IK}-frame. 
This is a non-classical generalisation of B.~J{\'o}nsson and A.~Tarski  \cite{JoTa51} representation of Boolean algebras with operators.
Note that ``complex algebra'' is a commonly used name for the standard construction of an algebra of a certain type from a given frame, developed in \cite{JoTa51}.
Contrary to the case of algebraic semantics, relational semantics adequate for {\sf L2GC+FS} does not necessarily exist for every intermediate 
logic {\sf L}, because {\sf L} may itself be Kripke-incomplete. Hence, relational completeness and the representation theorem for 
{H$_{\sf L}$2GC+FS}-algebras for other intermediate logics {\sf L} are left for a separate study.

This paper is structured as follows. In Section~\ref{Sec:Axiomatisations}, we recall the
logic {\sf IntGC} introduced by the authors in \cite{DzJaKo10}. We show that in the
fusion of two {\sf IntGC} logics with two independent Galois connection pairs $({\Diamond},{\BOX})$
and $({\DIAMOND}, {\Box})$, axioms (FS1) and (FS4) are equivalent, and so are (FS2) and (FS3). 
Logic {\sf Int2GC+FS} is then defined as a fusion of two {\sf IntGC}s plus two axioms (FS1) and (FS2) added.
We note that {\sf Int2GC+FS} can be regarded as an intuitionistic bi-modal logic, 
and the pairs $\Diamond$, $\Box$ and  $\DIAMOND$, $\BOX$ are intuitionistic modal connectives 
in the sense of Fischer Servi. In fact, {\sf Int2GC+FS} extends the fusion {\sf IK}\FUSE{\sf IK} by
the Browerian axioms, and this gives us the procedure (B). We also prove that {\sf Int2GC+FS}
is syntactically equivalent to intuitionistic tense logic {\sf IK$_t$}. Section~\ref{Sec:Algebras} is devoted to H2GC+FS-algebras. In this section, also 
fuzzy modal operators on complete Heyting algebras are considered as another motivation.
Section~\ref{Sec:RelationalSematics} contains a relational completeness results showing that {\sf Int2GC+FS} 
is complete with respect to the models defined on {\sf IK}-frames.
We also give a representation theorem stating that any H2GC+FS-algebra can be embedded into the complex 
algebra of its canonical {\sf IK}-frame.
The paper ends with some concluding remarks.

\section{Intuitionistic logic with two Galois connections and Fischer Servi axioms}
\label{Sec:Axiomatisations}

We begin recalling the intuitionistic propositional logic with a Galois connection ({\sf IntGC})
defined by the authors in \cite{DzJaKo10}. The language of {\sf IntGC} is constructed from an enumerable
infinite set of propositional variables $\mathit{Var}$, the connectives $\neg$, $\vee$, $\wedge$, $\to$,
and the unary operators $\Diamond$ and $\BOX$. The constant \emph{true} is defined by setting
$\top := p \to p$ for some fixed propositional variable $p \in \mathit{Var}$, and the constant 
\emph{false} is defined by $\bot := \neg \top$. We also set $A \leftrightarrow B := (A \to B) \wedge (B \to A)$.
The logic {\sf IntGC} is the smallest logic that contains intuitionistic 
propositional logic {\sf Int} and is closed under modus ponens (MP), and rules (GC\,${\BOX}{\Diamond}$) and (GC\,${\Diamond}{\BOX}$):
\begin{tabbing}
\TABS\label{Rule:GC}%
(GC\,${\BOX}{\Diamond}$) \ $\displaystyle \frac{A \to \BOX B}{\Diamond A \to B}$ 
\>(GC\,${\Diamond}{\BOX}$) \ $\displaystyle \frac{\Diamond A \to B}{A \to \BOX B}$ 
\end{tabbing} \medskip
It is known that the following rules are admissible in {\sf IntGC}:

\begin{tabbing}
\TABS
(RN$\BOX$) $\displaystyle \frac{A}{\BOX A}$ \\[4mm]
(RM$\BOX$) $\displaystyle \frac{A \to B}{\BOX A \to \BOX B}$  \>(RM$\Diamond$) $\displaystyle\frac{A \to B}{\Diamond A \to \Diamond B}$ \\
\end{tabbing}
In addition, the following formulas are provable:
\begin{enumerate}[\rm (i)]
\item $A \to \BOX \Diamond A$ \ and \  $\Diamond \BOX A \to A$;

\item $\Diamond A \leftrightarrow \Diamond \BOX \Diamond A$ \ and \ $\BOX A \leftrightarrow \BOX \Diamond \BOX A$;

\item $\BOX \top$  \ and \  $\neg \Diamond \bot$;

\item $\BOX (A \wedge B)  \leftrightarrow  \BOX A \wedge \BOX B$ \ and \
$\Diamond (A \vee B)  \leftrightarrow \Diamond A \vee \Diamond B$;

\item $\BOX (A \to B) \to (\BOX A \to \BOX B)$.
\end{enumerate}

Next we define {\sf Int2GC} by adding another independent Galois connection pair to {\sf IntGC}. The language of the 
logic {\sf Int2GC} is thus the one of {\sf IntGC} extended by two unary connectives 
${\DIAMOND}$ and ${\Box}$, and the logic {\sf Int2GC} is the smallest logic extending {\sf IntGC} by rules
(GC\,${\Box}{\DIAMOND}$) and (GC\,${\DIAMOND}{\Box}$):\medskip
\begin{tabbing}
\TABS
(GC\,${\Box}{\DIAMOND}$) \ $\displaystyle \frac{A \to \Box B}{\DIAMOND A \to B}$ 
\>(GC\,${\DIAMOND}{\Box}$) \ $\displaystyle \frac{\DIAMOND A \to B}{A \to \Box B}$
\end{tabbing}\medskip
Obviously, in {\sf Int2GC} also the rules:
\begin{tabbing}
\TABS
(RN$\Box$) $\displaystyle \frac{A}{\Box A}$ \\[4mm]
(RM$\Box$) $\displaystyle \frac{A \to B}{\Box A \to \Box B}$  \>(RM$\DIAMOND$) $\displaystyle\frac{A \to B}{\DIAMOND A \to \DIAMOND B}$ \\
\end{tabbing}
are admissible, and the following formulas are provable:
\begin{enumerate}[(i)]

\item $A \to \Box \DIAMOND A$ \ and \  $\DIAMOND \Box A \to A$;

\item $\DIAMOND A \leftrightarrow \DIAMOND \Box \DIAMOND A$ \ and \ $\Box A \leftrightarrow \Box \DIAMOND \Box A$;

\item $\Box \top$  \ and \  $\neg \DIAMOND \bot$;

\item $\Box (A \wedge B)  \leftrightarrow  \Box A \wedge \Box B$ \ and \
$\DIAMOND (A \vee B)  \leftrightarrow \DIAMOND A \vee \DIAMOND B$;

\item $\Box (A \to B) \to (\Box A \to \Box B)$.
\end{enumerate}
In fact, {\sf Int2GC} is just the fusion {\sf IntGC}\FUSE{\sf IntGC} of two separate {\sf IntGC}s having the
Galois connections $({\Diamond},{\BOX})$ and $({\DIAMOND},{\Box})$, respectively.

\medskip

\emph{Intuitionistic modal logic}\/ {\sf IK} was introduced by
G.~Fischer Servi in \cite{FishServ84}. The logic {\sf IK} is obtained
by adding two modal connectives $\Diamond$ and $\Box$ to intuitionistic
logic satisfying the following axioms:
\begin{enumerate}[\rm ({IK}1)]\label{Axioms:Fischer}
\item $\Diamond (A \vee B) \to \Diamond A \vee \Diamond B$
\item $\Box A \wedge \Box B \to \Box (A \wedge B)$
\item $\neg \Diamond \bot$
\item $\Diamond( A \to\ B) \to (\Box A \to \Diamond B)$
\item $(\Diamond A \to \Box B) \to \Box(A \to B)$ 
\end{enumerate}
In addition, the \emph{monotonicity rules} for both $\Diamond$ and $\Box$ are admissible:
\begin{tabbing}
\TABS
(RM$\Diamond$) $\displaystyle \frac{A \to B}{\Diamond A \to \Diamond B}$  
\> (RM$\Box$) $\displaystyle\frac{A \to B}{\Box A \to \Box B}$ 
\end{tabbing}
Note that axiom (IK4) is the same as (FS1) and (IK5) equals (FS3), and (FS2) and (FS4)
are analogous axioms for $\DIAMOND$ and $\BOX$. Note also that in \cite{simpson1994proof} it is argued that 
{\sf IK} is the true intuitionistic analogue of ``classical'' {\sf K}. 

\begin{proposition} \label{Prop:FSEquiv}
The following assertions hold in {\sf Int2GC}.
\begin{enumerate}[\rm (a)]
\item Axioms {\rm (FS1)} and {\rm (FS4)} are equivalent.
\item Axioms {\rm (FS2)} and {\rm (FS3)} are equivalent.
\end{enumerate}
\end{proposition}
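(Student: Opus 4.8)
The plan is to prove part~(a) by establishing the two implications (FS1)$\Rightarrow$(FS4) and (FS4)$\Rightarrow$(FS1) separately inside {\sf Int2GC}, using only the Galois connection rules, the admissible monotonicity rules, the provable formulas of {\sf IntGC} and of its second copy listed above, and plain intuitionistic propositional reasoning (chaining of implications). Part~(b) will then follow from part~(a) by symmetry, with no extra work.

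For (FS1)$\Rightarrow$(FS4), I would first instantiate the scheme (FS1) at $A \mapsto \DIAMOND A$ and $B \mapsto \BOX B$, obtaining $\Diamond(\DIAMOND A \to \BOX B) \to (\Box\DIAMOND A \to \Diamond\BOX B)$. Because $A \to \Box\DIAMOND A$ and $\Diamond\BOX B \to B$ are provable (the first items of the respective lists of provable formulas), the consequent $\Box\DIAMOND A \to \Diamond\BOX B$ intuitionistically implies $A \to B$, so one gets $\Diamond(\DIAMOND A \to \BOX B) \to (A \to B)$. A single application of the rule (GC\,${\Diamond}{\BOX}$) --- with ``$A$'' in that rule instantiated to $\DIAMOND A \to \BOX B$ and ``$B$'' to $A \to B$ --- then yields $(\DIAMOND A \to \BOX B) \to \BOX(A \to B)$, which is precisely (FS4).

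For the converse (FS4)$\Rightarrow$(FS1), I would apply the rule (GC\,${\BOX}{\Diamond}$) to (FS4), getting $\Diamond(\DIAMOND A \to \BOX B) \to (A \to B)$, and then instantiate this at $A \mapsto \Box A$ and $B \mapsto \Diamond B$ to obtain $\Diamond(\DIAMOND\Box A \to \BOX\Diamond B) \to (\Box A \to \Diamond B)$. Using the provable formulas $\DIAMOND\Box A \to A$ and $B \to \BOX\Diamond B$, the implication $(A \to B) \to (\DIAMOND\Box A \to \BOX\Diamond B)$ is provable; feeding it into the admissible rule (RM$\Diamond$) gives $\Diamond(A \to B) \to \Diamond(\DIAMOND\Box A \to \BOX\Diamond B)$, and composing this with the implication obtained a moment ago yields $\Diamond(A \to B) \to (\Box A \to \Diamond B)$, that is, (FS1). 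For part~(b), observe that {\sf Int2GC} is the fusion of two identical copies of {\sf IntGC}, hence invariant under the substitution simultaneously swapping $\Diamond \leftrightarrow \DIAMOND$ and $\BOX \leftrightarrow \Box$; this substitution carries (FS1) to (FS2) and (FS4) to (FS3), so the equivalence established in~(a) transfers at once to an equivalence of (FS2) and (FS3).

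I do not anticipate a genuine obstacle: the argument is entirely elementary. The one point demanding care is the bookkeeping --- in each direction the ``doubled'' compound operators $\Diamond\BOX$, $\Box\DIAMOND$ and $\DIAMOND\Box$, $\BOX\Diamond$ must be matched with the correct one of the four adjunction identities ($A \to \BOX\Diamond A$, $\Diamond\BOX A \to A$, $A \to \Box\DIAMOND A$, $\DIAMOND\Box A \to A$) so that they collapse against $A$ or $B$ --- together with the fact that the (GC) rules are one-directional rules rather than biconditional axioms, which is why the two halves of~(a) must be argued independently instead of by a single chain of equivalences.
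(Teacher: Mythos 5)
Your proposal is correct and follows essentially the same route as the paper's proof: in each direction you instantiate the assumed Fischer Servi scheme at the ``doubled'' operators, collapse $\Box\DIAMOND A \to \Diamond\BOX B$ (resp.\ build $\DIAMOND\Box A \to \BOX\Diamond B$) using the adjunction formulas $A \to \Box\DIAMOND A$, $\Diamond\BOX B \to B$, $\DIAMOND\Box A \to A$, $B \to \BOX\Diamond B$ together with syllogism and (RM$\Diamond$), and finish with a single application of the appropriate Galois connection rule. Your handling of part~(b) via the swap $\Diamond \leftrightarrow \DIAMOND$, $\Box \leftrightarrow \BOX$ is a clean way of making precise the paper's remark that (b) is proved analogously.
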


\begin{proof}
We prove only assertion (a), because (b) can be proved analogously. Here
$\vdash A$ denotes that $A$ is provable in {\sf Int2GC}.

\smallskip
\noindent%
(FS1)$\Rightarrow$(FS4):  Let us set $X := A$, $Y := \Box \DIAMOND A$ and $Z := \Diamond \BOX B$ 
in the provable formula  $(X \to Y) \to ((Y \to Z) \to (X \to Z))$.
We get $\vdash (\Box \DIAMOND A \to \Diamond \BOX B) \to (A \to \Diamond \BOX B)$
by using also $\vdash A \to \Box \DIAMOND A$. This is equivalent 
to $\vdash A \wedge (\Box \DIAMOND A \to \Diamond \BOX B) \to \Diamond \BOX B$. 
Because $\vdash \Diamond \BOX B \to B$, this means  
$\vdash A \wedge (\Box \DIAMOND A \to \Diamond \BOX B)  \to B$ and
$\vdash (\Box \DIAMOND A \to \Diamond \BOX B)  \to(A \to B)$. 
If we set $A:= \DIAMOND A$ and $B:= \BOX B$ in (FS1), we obtain
$\vdash \Diamond(\DIAMOND A \to \BOX B) \to (\Box \DIAMOND A \to \Diamond \BOX B)$,
and so $\vdash \Diamond(\DIAMOND A \to \BOX B)  \to (A \to B)$. This implies
$\vdash (\DIAMOND A \to \BOX B)  \to \BOX(A \to B)$ by (GC\,${\Diamond}{\BOX}$).

\smallskip\noindent%
(FS4)$\Rightarrow$(FS1): 
We set $X := \DIAMOND \Box A$, $Y := A$ and $Z := B$ 
in $(X \to Y) \to ((Y \to Z) \to (X \to Z))$. This gives
$\vdash  (\DIAMOND \Box A \to A) \to ((A \to B) \to (\DIAMOND \Box A \to B))$,
and $\vdash (A \to B) \to (\DIAMOND \Box A \to \BOX \Diamond B)$,
since $\vdash \DIAMOND \Box A \to A$ and $\vdash B \to \BOX \Diamond B$.
By monotonicity, $\vdash \Diamond(A \to B) \to \Diamond(\DIAMOND \Box A \to \BOX \Diamond B)$. 
By setting $A := \Box A$ and  $B:= \Diamond B$ in (FS4), we have
$\vdash (\DIAMOND \Box A \to \BOX \Diamond B) \to \BOX (\Box A \to \Diamond B)$ and
$\vdash \Diamond(\DIAMOND \Box A \to \BOX \Diamond B) \to (\Box A \to \Diamond B)$ 
by (GC\,${\BOX}{\Diamond}$). Therefore, we obtain 
$\vdash \Diamond(A \to B) \to  (\Box A \to \Diamond B)$. 
\end{proof}

Logic {\sf Int2GC+FS} is defined as the extension of {\sf Int2GC} that satisfies (FS1) and (FS2). 
By Proposition~\ref{Prop:FSEquiv}, it is clear that we have several equivalent 
axiomatisations of {\sf Int2GC+FS} given in the next corollary.
\begin{corollary}
\[ \mbox{\sf Int2GC+FS} = {\sf Int2GC} \PLUS \{ {\rm (FS1) \ \text{or} \ (FS4)} \}
\PLUS \{ {\rm (FS2)  \ \text{or} \ (FS3)} \}. \]
\end{corollary}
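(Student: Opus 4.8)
The plan is to read the corollary off Proposition~\ref{Prop:FSEquiv} after recording one elementary fact about the extension operator $\PLUS$. Recall that {\sf Int2GC+FS} is \emph{defined} as ${\sf Int2GC} \PLUS \{ {\rm (FS1)},{\rm (FS2)} \}$, so all that needs checking is that (FS1) may be exchanged for (FS4), and (FS2) for (FS3), in any combination without altering the logic.

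First I would prove a bookkeeping lemma: if $\mathcal{L}$ is a logic in a language $\Lambda$, $\Gamma$ a finite set of schemes in $\Lambda$, and $\alpha,\beta$ schemes in $\Lambda$ such that $\alpha$ is provable in $\mathcal{L} \PLUS \Gamma \PLUS \{\beta\}$ and $\beta$ is provable in $\mathcal{L} \PLUS \Gamma \PLUS \{\alpha\}$, then $\mathcal{L} \PLUS \Gamma \PLUS \{\alpha\} = \mathcal{L} \PLUS \Gamma \PLUS \{\beta\}$. The proof is routine: $\mathcal{L} \PLUS \Gamma \PLUS \{\beta\}$ extends $\mathcal{L}$, is closed under the rules of $\mathcal{L}$, and contains every substitution instance of the schemes in $\Gamma$ and, by hypothesis, of $\alpha$; since $\mathcal{L} \PLUS \Gamma \PLUS \{\alpha\}$ is by definition the smallest logic with these properties, $\mathcal{L} \PLUS \Gamma \PLUS \{\alpha\} \subseteq \mathcal{L} \PLUS \Gamma \PLUS \{\beta\}$, and the reverse inclusion is symmetric. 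Then I would apply this with $\mathcal{L} = {\sf Int2GC}$. By Proposition~\ref{Prop:FSEquiv}(a), (FS1) and (FS4) are each provable from the other over ${\sf Int2GC}$, and since that argument uses only principles available in ${\sf Int2GC}$ together with the scheme being assumed, it goes through verbatim over ${\sf Int2GC} \PLUS \Gamma$ for every $\Gamma$. Taking $\Gamma = \{ {\rm (FS2)} \}$, and separately $\Gamma = \{ {\rm (FS3)} \}$, shows that (FS1) and (FS4) are interchangeable in the presence of either (FS2) or (FS3); symmetrically, Proposition~\ref{Prop:FSEquiv}(b) makes (FS2) and (FS3) interchangeable in the presence of either (FS1) or (FS4). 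Chaining these equalities identifies all four logics ${\sf Int2GC} \PLUS \{\alpha,\beta\}$ with $\alpha \in \{ {\rm (FS1)},{\rm (FS4)} \}$ and $\beta \in \{ {\rm (FS2)},{\rm (FS3)} \}$, which is precisely the stated identity.

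There is no real obstacle; given Proposition~\ref{Prop:FSEquiv}, the corollary is a formality. The only point worth a word of care is that $\mathcal{L} \PLUS \Gamma$ is closed under uniform substitution into the newly added axioms and under the inference rules of $\mathcal{L}$ --- this is exactly what licenses carrying out the scheme-level manipulations from the proof of Proposition~\ref{Prop:FSEquiv}, such as substituting $A := \DIAMOND A$ and $B := \BOX B$ into (FS1), inside each of the extensions under consideration.
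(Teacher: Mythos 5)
Your proof is correct and follows the same route as the paper, which simply derives the corollary from Proposition~\ref{Prop:FSEquiv} (the paper gives no further argument, deeming the interchangeability of (FS1)/(FS4) and (FS2)/(FS3) over {\sf Int2GC} immediate). Your explicit bookkeeping lemma about $\PLUS$ and the remark that the equivalence proofs persist under any extension of {\sf Int2GC} just make precise what the paper leaves tacit.
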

\medskip

Logic {\sf Int2GC+FS} satisfies the counterparts of axioms 
(IK1)--(IK5) of {\sf IK}, so {\sf Int2GC+FS} can be regarded as a intuitionistic bi-modal logic, 
and the pairs of operators ($\Diamond$, $\Box$) and  ($\DIAMOND$, $\BOX$) can be regarded as intuitionistic modal 
connectives in the sense of Fischer Servi. Hence, {\sf Int2GC+FS} can be seen as an extension of
the fusion {\sf IK}\FUSE{\sf IK} of two copies of intuitionistic modal logic {\sf IK}, the first one with the modalities 
$({\Diamond},{\Box})$  and the second one with $({\DIAMOND},{\BOX})$. 

In ordered sets, there is another way of defining Galois connections presented in conditions \eqref{EQ:GC1} and \eqref{EQ:GC2}.
This gives us method (B) mentioned in Introduction. Let us considerer the fusion {\sf IK}\FUSE{\sf IK} of two copies of intuitionistic modal 
logic {\sf IK}, the first one with the operators $({\Diamond},{\Box})$ and the second one with $({\DIAMOND},{\BOX})$. We extend  
{\sf IK}\FUSE{\sf IK} by the so-called \emph{Brouwerian axioms}: 
\begin{tabbing}\TABS\label{Eq:Brower}%
(BR1) \ $A \to \BOX \Diamond A$ \>(BR2) \ $\Diamond \BOX A \to A$  \\
(BR3) \ $A \to \Box \DIAMOND A$ \>(BR4) \ $\DIAMOND \Box A \to A$  
\end{tabbing}
These axioms are also referred to as the \emph{converse axioms}, since these axioms are needed to ensure that the accessibility relations 
for the operators  $F,G$ and $P,H$ are each other’s converse in tense logics. We denote this logic by {\sf IK}\FUSE{\sf IK+BR}.

\begin{proposition} \label{Prop:equivalence}
{\sf Int2GC+FS} = {\sf IK}\FUSE{\sf IK+BR}.
\end{proposition}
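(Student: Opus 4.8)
The plan is to prove the two set-theoretic inclusions between the logics by showing that each is contained in the other, working entirely at the level of provability. Since both logics extend intuitionistic propositional logic in the language with the four unary operators $\Diamond$, $\Box$, $\DIAMOND$, $\BOX$ and both are closed under modus ponens and the monotonicity rules, it suffices to show that the axioms and rules defining one system are derivable in the other, and vice versa. Concretely, I would first observe that the Brouwerian axioms (BR1)--(BR4) are nothing but the provable formulas (i) listed for {\sf IntGC} and {\sf Int2GC} in the excerpt: (BR1) is $A \to \BOX\Diamond A$, (BR2) is $\Diamond\BOX A \to A$, (BR3) is $A \to \Box\DIAMOND A$, and (BR4) is $\DIAMOND\Box A \to A$. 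Likewise the (IK1)--(IK3) schemes for the pair $(\Diamond,\BOX)$ are exactly the provable formulas (iii) and (iv) of {\sf IntGC}, and those for $(\DIAMOND,\Box)$ are provable formulas (iii) and (iv) of {\sf Int2GC}; the monotonicity rules for all four operators are admissible in {\sf Int2GC} as noted. Finally (IK4)=(FS1) and (IK5)=(FS3) for $(\Diamond,\Box)$, while the corresponding {\sf IK}-copy with $(\DIAMOND,\BOX)$ has (FS2) and (FS4); by Proposition~\ref{Prop:FSEquiv} all of (FS1)--(FS4) hold in {\sf Int2GC+FS}.

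For the inclusion {\sf Int2GC+FS} $\subseteq$ {\sf IK}\FUSE{\sf IK+BR}, I would argue that every axiom and rule of {\sf Int2GC+FS} is derivable on the right. The Fischer Servi axioms (FS1), (FS2) are present: (FS1)=(IK4) of the first {\sf IK}-copy, and (FS2) is (IK4) of the second {\sf IK}-copy (with $\DIAMOND,\BOX$ in place of $\Diamond,\Box$). The nontrivial part is to recover the Galois-connection rules (GC\,${\BOX}{\Diamond}$), (GC\,${\Diamond}{\BOX}$), (GC\,${\Box}{\DIAMOND}$), (GC\,${\DIAMOND}{\Box}$). Here the Brouwerian axioms do the work: given $\vdash A \to \BOX B$ in {\sf IK}\FUSE{\sf IK+BR}, apply (RM$\Diamond$) to get $\vdash \Diamond A \to \Diamond\BOX B$, then compose with (BR2) $\vdash \Diamond\BOX B \to B$ to obtain $\vdash \Diamond A \to B$; conversely, from $\vdash \Diamond A \to B$ apply (RM$\BOX$) and compose with (BR1) $\vdash A \to \BOX\Diamond A$ to get $\vdash A \to \BOX B$. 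The analogous argument using (BR3), (BR4) and the monotonicity of $\DIAMOND$, $\Box$ recovers the other Galois pair. For the reverse inclusion {\sf IK}\FUSE{\sf IK+BR} $\subseteq$ {\sf Int2GC+FS}, I would check that each {\sf IK}-axiom and each Brouwerian axiom is provable in {\sf Int2GC+FS}: (IK1)--(IK3) for both operator pairs are among the provable formulas (iii), (iv) of {\sf IntGC}/{\sf Int2GC}, (IK4)--(IK5) for $(\Diamond,\Box)$ are (FS1), (FS3) (the latter by Proposition~\ref{Prop:FSEquiv}), (IK4)--(IK5) for $(\DIAMOND,\BOX)$ are (FS2), (FS4) (again via Proposition~\ref{Prop:FSEquiv}), the monotonicity rules are admissible, and (BR1)--(BR4) are the provable formulas (i) of {\sf IntGC} and {\sf Int2GC}. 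Since {\sf Int2GC+FS} is closed under MP and the relevant derived rules, it contains {\sf IK}\FUSE{\sf IK+BR}.

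The main obstacle, and the place where a little care is genuinely needed, is the derivation of the Galois-connection rules from the Brouwerian axioms plus monotonicity: one must be sure that the monotonicity rules (RM$\Diamond$), (RM$\BOX$), (RM$\DIAMOND$), (RM$\Box$) are indeed available in {\sf IK}\FUSE{\sf IK+BR}. Two of them, (RM$\Diamond$) and (RM$\Box$), are explicitly stated as admissible in {\sf IK} (hence in the fusion), and the other two are the same rules for the second {\sf IK}-copy with operators $\DIAMOND$, $\BOX$. With these in hand the round-trip derivations above are routine. A secondary point worth stating explicitly is that the fusion operation and the $\oplus\Gamma$ operation, as defined in the Introduction, make ``smallest logic containing \dots and closed under \dots'' precise, so that the two inclusions together literally give equality of the logics; I would phrase the proof so that this is transparent.
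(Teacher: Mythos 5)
Your proposal is correct and follows essentially the same route as the paper: both inclusions are verified axiom-by-axiom and rule-by-rule, with the key step being the recovery of the Galois-connection rules from (BR1)--(BR4) plus monotonicity (compose $\vdash \Diamond A \to \Diamond\BOX B$ with (BR2), respectively $\vdash \BOX\Diamond A \to \BOX B$ with (BR1)), exactly as in the paper's proof. One small bookkeeping slip: the {\sf IK}-copies carry the modality pairs $(\Diamond,\Box)$ and $(\DIAMOND,\BOX)$, so the (IK1)--(IK3) instances for each copy draw from \emph{both} lists of provable formulas (e.g.\ $\Diamond(A\vee B)\to\Diamond A\vee\Diamond B$ and $\neg\Diamond\bot$ from the {\sf IntGC} list but $\Box A\wedge\Box B\to\Box(A\wedge B)$ from the {\sf Int2GC} list), rather than all from one of them as you stated; the needed formulas are all available, so this does not affect the argument.
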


\begin{proof} We have already noted that in {\sf Int2GC+FS} axioms (IK1)--(IK5) are provable for the operator pairs
$\Box$, $\Diamond$ and $\BOX$, $\DIAMOND$, and the operators $\Box$, $\Diamond$, $\BOX$, $\DIAMOND$ satisfy
the monotonicity rule. Additionally, Brouwerian axioms are provable in {\sf Int2GC+FS}.

On the other hand, $\vdash A \to \BOX B$ implies $\vdash \Diamond A \to \Diamond \BOX B$, which by (BR2) gives $\vdash \Diamond A \to B$. 
Similarly, $\vdash \Diamond A \to B$ implies $\vdash \BOX \Diamond A \to \BOX B$, and  by (BR1)  we get $\vdash A \to \BOX B$.
Thus, $(\Diamond, \BOX)$ is a Galois connection, and similarly we can show the same for the pair $(\DIAMOND, \Box)$. Fischer Servi
axioms (FS1)--(FS4) hold trivially in {\sf IK}\FUSE{\sf IK}.
\end{proof}

Proposition~\ref{Prop:equivalence} means that there exist two ways to extend intuitionistic logic with two Galois
connections such that these pairs are interlinked with Fischer Servi axioms. 

\begin{remark} \label{Rem:DefiningGalois}
The proof of Proposition~\ref{Prop:equivalence} reveals also that it is 
possible to endow a Galois connection in two ways to any logic $\mathcal{L}$ having modus ponens and 
satisfying the so-called \emph{law of syllogism} $(A \to B) \to ((B \to C) \to (A \to C))$.
The first way is to add operators $\Diamond$ and $\BOX$ to $\mathcal{L}$
and add rules (GC\,${\BOX}{\Diamond}$) and (GC\,${\Diamond}{\BOX}$). Or equivalently,
we may add Brouwerian axioms (BR1), (BR2) and rules of monotonicity (RM$\DIAMOND$) and (RM$\BOX$) for $\Diamond$ and $\BOX$.
Hence, the following are equivalent:
\begin{enumerate} [\rm (i)]
 \item $\mathcal{L} \PLUS \{ {\rm (GC}\,{\BOX}{\Diamond}), ({\rm GC}\,{\Diamond}{\BOX})  \}$
 \item $\mathcal{L} \PLUS \{ {\rm (BR1)}, {\rm (BR2)}, ({\rm RM} \Diamond), ({\rm RM} \BOX)   \}$
\end{enumerate} 
\end{remark}

Our next aim is to show that  {\sf Int2GC+FS} is equivalent to {\sf IK$_t$}. We need the following lemma.

\begin{lemma}\label{Lem:NewProperties}
The following formulas are {\sf Int2GC+FS}-provable:
\begin{enumerate}[\rm (a)]
 \item $\Box A \wedge \Diamond B \to \Diamond (A \wedge B)$ \quad and \quad $\BOX A \wedge \DIAMOND B \to \DIAMOND (A \wedge B)$;
 \item $\Box(A \to B) \to (\Diamond A \to \Diamond B)$  \quad and \quad  $\BOX (A \to B) \to (\DIAMOND A \to \DIAMOND B)$;
 \item $\Box \neg A \to \neg \Diamond A$ \quad and \quad  $\BOX \neg A \to \neg \DIAMOND A$.
\end{enumerate}
\end{lemma}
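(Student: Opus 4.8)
The plan is to derive all three items from the Fischer Servi axioms (FS1) and (FS2) together with the monotonicity rules (RM$\Diamond$), (RM$\DIAMOND$) and purely intuitionistic reasoning, in particular the law of syllogism and the currying equivalence between $X \to (Y \to Z)$ and $X \wedge Y \to Z$. In each item the right-hand formula is obtained from the left-hand one by the uniform substitution $(\Diamond,\Box) \mapsto (\DIAMOND,\BOX)$, which turns (FS1) into (FS2), (RM$\Diamond$) into (RM$\DIAMOND$), and the facts about $\Diamond,\BOX$ into the corresponding facts about $\DIAMOND,\Box$; hence it suffices to prove the left-hand formulas.

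\textbf{Item (a).} Substituting $A \wedge B$ for $B$ in (FS1) gives $\vdash \Diamond(A \to A\wedge B) \to (\Box A \to \Diamond(A \wedge B))$. Since $B \to (A \to A \wedge B)$ is intuitionistically provable, (RM$\Diamond$) yields $\vdash \Diamond B \to \Diamond(A \to A \wedge B)$. Chaining the two implications by the law of syllogism gives $\vdash \Diamond B \to (\Box A \to \Diamond(A \wedge B))$, and re-currying produces $\vdash \Box A \wedge \Diamond B \to \Diamond(A \wedge B)$.

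\textbf{Items (b) and (c).} Both follow from (a). For (b), apply (a) with $A \to B$ in place of $A$ and $A$ in place of $B$: $\vdash \Box(A \to B) \wedge \Diamond A \to \Diamond((A \to B) \wedge A)$. Since $(A \to B) \wedge A \to B$ is provable, (RM$\Diamond$) gives $\vdash \Diamond((A \to B)\wedge A) \to \Diamond B$, and combining (via syllogism) yields $\vdash \Box(A \to B) \wedge \Diamond A \to \Diamond B$, i.e.\ $\vdash \Box(A \to B) \to (\Diamond A \to \Diamond B)$. For (c), apply (a) with $\neg A$ in place of $A$ and $A$ in place of $B$: $\vdash \Box \neg A \wedge \Diamond A \to \Diamond(\neg A \wedge A)$. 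As $\neg A \wedge A \to \bot$ is provable, (RM$\Diamond$) gives $\vdash \Diamond(\neg A \wedge A) \to \Diamond \bot$, and together with $\vdash \neg \Diamond \bot$ (item~(iii) of the {\sf IntGC}-facts, equivalently $\vdash \Diamond \bot \to \bot$) we obtain $\vdash \Box \neg A \wedge \Diamond A \to \bot$, that is, $\vdash \Box \neg A \to \neg \Diamond A$.

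I do not expect a genuine obstacle here: the whole argument is a routine unwinding of the Fischer Servi axioms, and the only points requiring care are the choice of substitutions in (FS1) and the intuitionistic currying/syllogism bookkeeping. The mirror formulas are obtained verbatim by replacing (FS1), (RM$\Diamond$) and $\vdash \neg \Diamond \bot$ with (FS2), (RM$\DIAMOND$) and $\vdash \neg \DIAMOND \bot$.
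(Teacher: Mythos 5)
Your proof is correct. Item (a) is essentially the paper's own argument: the same substitution $B:=A\wedge B$ in (FS1), the same use of $\vdash B\to(A\to A\wedge B)$ with (RM$\Diamond$), and currying. For (b) and (c), however, you take a shorter route than the paper. The paper derives (b) by a detour through the second Galois connection: it uses $\vdash \DIAMOND\Box(A\to B)\to(A\to B)$ and $\vdash \Box(A\to B)\to\Box\DIAMOND\Box(A\to B)$, instantiating (a) with $A:=\DIAMOND\Box(A\to B)$ and $B:=A$; and it then obtains (c) from (b) by setting $B:=\bot$. You instead instantiate (a) directly with $A:=A\to B$, $B:=A$ (for (b)) and with $A:=\neg A$, $B:=A$ (for (c)), using only internalized modus ponens, $\neg A\wedge A\to\bot$, (RM$\Diamond$) and $\vdash\neg\Diamond\bot$. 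Since (a) is proved as a schema (the paper itself substitutes compound formulas into it), your instances are legitimate, so your derivations of (b) and (c) are valid and in fact more self-contained: they do not invoke the interaction facts $\DIAMOND\Box X\to X$ and $X\to\Box\DIAMOND X$ of the other adjoint pair at all. Your symmetry remark is also sound: the swap $\Diamond\mapsto\DIAMOND$, $\Box\mapsto\BOX$ (and back) maps the Galois-connection rules and (FS1)/(FS2) onto each other, so the mirror formulas follow verbatim, exactly as the paper's ``proved analogously'' intends.
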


\begin{proof}
We only prove the first formula of each statement.

(a) Axiom (FS1) is equivalent to $\Diamond(A \to B) \wedge \Box A \to \Diamond B$. If we set $B := A \wedge B$ in this formula,  
we have that $\vdash (\Diamond(A \to A \wedge B) \wedge \Box A ) \to \Diamond (A \wedge B)$. 
Because $A \to A \wedge B$ is equivalent to $A \to B$, and $\vdash B \to (A \to B)$ and monotonicity of $\Diamond$ imply
$\vdash \Diamond B \to \Diamond(A \to B)$,  we obtain $\vdash \Diamond B \wedge \Box A \to \Diamond(A \wedge B)$.

(b) Because $\vdash \DIAMOND \Box (A \to B) \to (A \to B)$, we have $\vdash \DIAMOND \Box (A \to B) \wedge A \to B$
and $\vdash \Diamond (\DIAMOND \Box (A \to B) \wedge A)  \to \Diamond B$.
Let us set $A := \DIAMOND \Box(A \to B)$ and $B := A$ in (a). 
We obtain $\vdash \Box \DIAMOND \Box (A \to B) \wedge \Diamond A \to \Diamond (\DIAMOND \Box (A \to B) \wedge A)$. 
Thus, $\vdash \Box \DIAMOND \Box (A \to B) \wedge \Diamond A \to \Diamond B$.
Because $\vdash \Box (A \to B) \to \Box \DIAMOND \Box(A \to B)$, we have 
$\vdash \Box(A \to B) \wedge \Diamond A \to \Diamond B$. This is equivalent to
$\vdash \Box(A \to B) \to (\Diamond A \to \Diamond B )$.

(c) If we set $B := \bot$ in (b), we get $\vdash \Box(A \to \bot) \to (\Diamond A \to \Diamond \bot )$. 
Because $\Diamond \bot$ is equivalent to $\bot$, we obtain $\vdash \Box \neg A \to \neg \Diamond A$.
\end{proof}

Next, we show that {\sf IK$_t$} and {\sf Int2GC+FS} are syntactically equivalent. Logic 
{\sf IK$_t$} is obtained by extending the language of intuitionistic propositional logic with the usual temporal expressions 
$FA$ ($A$ is true at some future time),
$PA$ ($A$ was true at some past time), 
$GA$ ($A$ will be true at all future times), and
$HA$ ($A$ has always been true in the past).
The following  Hilbert-style axiomatisation of {\sf IK$_t$} is given by Ewald in \cite{Ewald86}*{p.~171}:
\begin{tabbing}
\TABS
\ (1) \ All axioms of intuitionistic logic \\
\ (2) \ $G(A \to B) \to (GA \to GB)$ 				\>\ (2$'$) \ $H(A \to B) \to (HA \to HB)$ \\
\ (3) \ $G(A \wedge B) \leftrightarrow GA \wedge GB$ 		\>\ (3$'$) \ $H(A \wedge B) \leftrightarrow HA \wedge HB$ \\  
\ (4) \ $F(A \vee B) \leftrightarrow FA \vee FB$ 		\>\ (4$'$) \ $P(A \vee B) \leftrightarrow PA \vee PB$ \\
\ (5) \ $G(A \to B) \to (FA \to FB)$ 				\>\ (5$'$) \ $H(A \to B) \to (PA \to PB)$ \\ 
\ (6) \ $GA \wedge FB \to F(A \wedge B)$			\>\ (6$'$) \ $HA \wedge PB \to P(A \wedge B)$ \\
\ (7) \ $G \neg A  \to \neg FA$					\>\ (7$'$) \ $H \neg A \to \neg PA$ \\
\ (8) \ $FHA \to A$						\>\ (8$'$) \ $PGA \to A$ \\
\ (9) \ $A \to HFA$ 						\>\ (9$'$) \ $A \to GPA$ \\
$\!$(10) \ $(FA \to GB) \to G(A \to B)$				\>$\!$(10$'$) \ $(PA \to HB) \to H(A \to B)$ \\
$\!$(11) \ $F(A \to B) \to (GA \to FB)$				\>$\!$(11$'$) \ $P(A \to B) \to (HA \to PB)$ \\
\end{tabbing}
The rules of inference are modus ponens (MP), and 
\begin{tabbing}
\TABS
(RH) \ $\displaystyle \frac{A}{H A}$
\>$\!$(RG) \ $\displaystyle \frac{A}{G A}$
\end{tabbing}\medskip

Our next theorem shows that if we identify $\Diamond$, $\Box$, $\DIAMOND$, $\BOX$ with
$F$, $G$, $P$, $H$, respectively, then {\sf Int2GC+FS} and {\sf IK$_t$} are  syntactically equivalent.

\begin{theorem}\label{Thm:Equivalence} $\text{\sf IK}_t = \text{\sf Int2GC+FS}$.
\end{theorem}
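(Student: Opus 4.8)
The plan is to prove the two inclusions $\text{\sf IK}_t \subseteq \text{\sf Int2GC+FS}$ and $\text{\sf Int2GC+FS} \subseteq \text{\sf IK}_t$ by checking that, under the identification $\Diamond,\Box,\DIAMOND,\BOX \leftrightarrow F,G,P,H$, each axiom and rule of one system is derivable in the other. Since both logics are closed under the same propositional apparatus and modus ponens, it suffices to match up the remaining primitive ingredients on each side.

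First I would show $\text{\sf IK}_t \subseteq \text{\sf Int2GC+FS}$. The axioms of intuitionistic logic (1) are common. Axioms (2), (2$'$), (5), (5$'$) are exactly the distribution laws recalled for {\sf IntGC} and in Lemma~\ref{Lem:NewProperties}(b); axioms (3), (3$'$), (4), (4$'$) are the multiplicativity/additivity facts listed as item (iv) for {\sf IntGC} and {\sf Int2GC}; axioms (6), (6$'$) are precisely Lemma~\ref{Lem:NewProperties}(a); axioms (7), (7$'$) are Lemma~\ref{Lem:NewProperties}(c). The ``converse'' axioms (8), (8$'$), (9), (9$'$) are the Brouwerian axioms (BR1)--(BR4), which are provable in {\sf Int2GC+FS} as noted in the proof of Proposition~\ref{Prop:equivalence} (they appear already as item (i) for {\sf IntGC} and {\sf Int2GC}). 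Axioms (10), (10$'$) are (FS3), (FS4), and (11), (11$'$) are (FS1), (FS2); by the Corollary after Proposition~\ref{Prop:FSEquiv} all four are available in {\sf Int2GC+FS}. Finally the rules (RG), (RH) are (RN$\Box$), (RN$\BOX$), which are admissible in {\sf Int2GC}. Hence every axiom and rule of {\sf IK}$_t$ is valid in {\sf Int2GC+FS}.

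For the converse inclusion $\text{\sf Int2GC+FS} \subseteq \text{\sf IK}_t$, I would verify that the Galois-connection rules (GC\,${\BOX}{\Diamond}$), (GC\,${\Diamond}{\BOX}$), (GC\,${\Box}{\DIAMOND}$), (GC\,${\DIAMOND}{\Box}$) are admissible in {\sf IK}$_t$, and that the Fischer Servi axioms (FS1) and (FS2) are {\sf IK}$_t$-provable. The latter is immediate: (FS1) is Ewald's (11) and (FS2) is (11$'$). For the former, I would mimic the argument in the proof of Proposition~\ref{Prop:equivalence}: from $\vdash A \to HB$ (i.e.\ $A \to \BOX B$) apply monotonicity of $F$ (derivable in {\sf IK}$_t$ from (5) together with (RG) and propositional logic, since $B\to B$ gives nothing but one uses $G(HB\to B)$ is not available directly --- instead use that $\vdash HB \to B$ fails, so one argues: $A\to HB$ yields $FA \to FHB$ by RM$F$, and (8) $FHB\to B$ gives $FA\to B$). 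Conversely $\vdash FA\to B$ gives $HFA\to HB$ by RM$H$, and (9) $A\to HFA$ yields $A\to HB$. This establishes (GC\,${\BOX}{\Diamond}$) and (GC\,${\Diamond}{\BOX}$); the pair for $(\DIAMOND,\Box)$ follows symmetrically using the primed axioms. The monotonicity rules RM$F$, RM$H$ themselves come from axioms (5), (5$'$) (or (2), (2$'$)) plus (RG), (RH) and propositional reasoning.

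The step I expect to require the most care is the admissibility of the Galois-connection rules in {\sf IK}$_t$, because it forces one to derive the monotonicity rules RM$\Diamond$, RM$\BOX$ (and their duals) inside {\sf IK}$_t$ from Ewald's chosen axioms rather than taking them as primitive, and then to chain these with the converse axioms (8)--(9$'$) in exactly the right order; a sign error or a confusion between $G$ and $H$ (equivalently $\Box$ and $\BOX$) would break the argument. Everything else is a direct dictionary translation using results already established earlier in this section, so once the Galois-connection rules are secured the theorem follows by noting that each system is the smallest logic closed under its rules and containing its axioms, and we have shown each contains the generators of the other.
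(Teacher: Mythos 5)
Your proof is correct and follows essentially the same route as the paper: a two-way verification in which Ewald's axioms (2)--(9$'$) are matched to the {\sf Int2GC} facts and Lemma~\ref{Lem:NewProperties}, (10)--(11$'$) to the Fischer Servi axioms, and conversely the Galois-connection rules are shown admissible in {\sf IK}$_t$ by first deriving RM$F$, RM$G$, RM$H$, RM$P$ from (2), (2$'$), (5), (5$'$) with (RG), (RH) and then chaining with the converse axioms (8)--(9$'$). The garbled aside about $G(HB\to B)$ is harmless, since the argument you actually run ($A\to HB$, then RM$F$ and (8), and dually RM$H$ and (9)) is exactly the paper's.
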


\begin{proof} 
First we will show that the {\sf IK$_t$}-axioms are provable in {\sf Int2GC+FS}, and all rules
of {\sf IK$_t$} are admissible in {\sf Int2GC+FS}. As mentioned in Section~\ref{Sec:Axiomatisations},
axioms (2), (2$'$), (3), (3$'$) (4), (4$'$), (8), (8$'$), (9), (9$'$) are provable even in {\sf Int2GC}.
Additionally, rules (MP), (RH), and (RG) are admissible in {\sf Int2GC}.
Axioms (10), (10$'$), (11), (11$'$) are Fischer Servi axioms (FS3), (FS4), (FS1), (FS2), so
they are provable in {\sf Int2GC+FS}. The provability of (5), (5$'$), (6), (6$'$), (7), and (7$'$)                                                      
is shown in Lemma~\ref{Lem:NewProperties}.

Because axioms (10), (10$'$), (11), (11$'$) are the Fischer Servi axioms, for the other direction it
is enough to show the admissibility of rules (GC\,${\BOX}{\Diamond}$), (GC\,${\Diamond}{\BOX}$), (GC\,${\Box}{\DIAMOND}$), 
(GC\,${\DIAMOND}{\Box}$) in {\sf IK$_t$}. First, we show the admissibility of the rules of monotonicity, 
that is, if $A \to B$ is provable, then $H A \to H B$, $PA \to P B$, $G A \to G B$, and $FA \to F B$ are provable.

Here $\vdash A$ denotes that the formula $A$ is provable in {\sf IK$_t$}.
Assume  $\vdash A \to B$. By (RG), $\vdash G(A \to B)$.
Now $\vdash G A \to G B$ follows by (2), and from $\vdash G(A \to B)$,
we obtain also $\vdash FA \to FB$ by (5). Similarly, $\vdash A \to B$ implies 
$\vdash HA \to H B$ and $\vdash P A \to P B$ by applying (RH), (2$'$), and (5$'$).

Next we prove the admissibility of (GC\,${\BOX}{\Diamond}$). Assume that  $\vdash A \to H B$. 
Then,  $F A \to F H B$ by the monotonicity of $F$. Because $\vdash F H B \to B$ by (8),
we obtain $\vdash F A \to B$.  Similarly, by (8$'$) and the monotonicity of $P$, $A \to GB$ implies 
$PA \to B$, that is,  (GC\,${\Box}{\DIAMOND}$) is admissible in {\sf IK$_t$}.
The monotonicity of $H$ and axiom (9) yield that $FA \to B$ implies $A \to HB$,
and monotonicity of $G$ and (9$'$) give that $PA \to B$ implies $A \to BG$. Thus, rules
(GC\,${\Diamond}{\BOX}$) and (GC\,${\DIAMOND}{\Box}$) are admissible. 
\end{proof}

By combining Proposition~\ref{Prop:equivalence} and Theorem~\ref{Thm:Equivalence}, we get the following corollary.
Note that {\sf IK$_t$} = {\sf IK}\FUSE{\sf IK+BR} is proved already by Davoren \cite{Davoren}.

\begin{corollary} \label{Prop:Int equivalence}
{\sf IK$_t$} = {\sf Int2GC+FS} = {\sf IK}\FUSE{\sf IK+BR}.
\end{corollary}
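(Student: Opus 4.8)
The plan is simply to chain together the two equivalences already established earlier in this section. First I would invoke Theorem~\ref{Thm:Equivalence}, which states that, once $\Diamond$, $\Box$, $\DIAMOND$, $\BOX$ are identified with $F$, $G$, $P$, $H$ respectively, the logics {\sf IK$_t$} and {\sf Int2GC+FS} are syntactically equivalent. Then I would invoke Proposition~\ref{Prop:equivalence}, which states that {\sf Int2GC+FS} = {\sf IK}\FUSE{\sf IK+BR}. Since equality of logics is transitive, concatenating these two identities immediately gives the three-way equality {\sf IK$_t$} = {\sf Int2GC+FS} = {\sf IK}\FUSE{\sf IK+BR}, which is exactly the assertion of the corollary.

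The one point that deserves a moment's attention is that the operator identifications used in the two ingredients must be mutually compatible. In Theorem~\ref{Thm:Equivalence} the tense operators $F$, $G$, $P$, $H$ are matched with the operators $\Diamond$, $\Box$, $\DIAMOND$, $\BOX$ of {\sf Int2GC+FS}, while in Proposition~\ref{Prop:equivalence} and its proof the operators of {\sf IK}\FUSE{\sf IK+BR} are literally the same $\Diamond$, $\Box$ (from the first copy of {\sf IK}) and $\DIAMOND$, $\BOX$ (from the second copy). Hence no relabelling intervenes, and the composition is genuine equality of logics formulated in one common language; I would just remark on this to make the transitivity step rigorous.

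I do not anticipate any real obstacle, since all the substantive work has already been done: Theorem~\ref{Thm:Equivalence} carries out the mutual derivations between the Ewald axiomatisation and the Galois-connection-plus-Fischer-Servi presentation, and Proposition~\ref{Prop:equivalence} establishes the interchangeability of the Galois-connection presentation with the {\sf IK}-plus-Brouwerian-axioms presentation. The final clause of the corollary, noting that {\sf IK$_t$} = {\sf IK}\FUSE{\sf IK+BR} was already obtained by Davoren \cite{Davoren}, I would include only as an external consistency check, not as part of the argument itself.
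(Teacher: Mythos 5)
Your proposal is correct and matches the paper's own derivation: the corollary is obtained precisely by combining Theorem~\ref{Thm:Equivalence} ({\sf IK$_t$} = {\sf Int2GC+FS}) with Proposition~\ref{Prop:equivalence} ({\sf Int2GC+FS} = {\sf IK}\FUSE{\sf IK+BR}), with Davoren's result cited only as an independent confirmation. Your remark on the compatibility of the operator identifications is a reasonable extra precision but introduces nothing beyond what the paper implicitly assumes.
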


In \cite{JaKoKo08}, the logic extending classical logic {\sf Cl} with a Galois connection $(\Diamond, \BOX)$
was introduced and it is proved  that if we add another two operators $\DIAMOND$ and $\Box$
that are connected to the Galois connection $(\Diamond, \BOX)$ by the De~Morgan axioms:
\begin{tabbing} \TABS
(DM1) \ $\Box A \leftrightarrow \neg \Diamond \neg A$ \> (DM2) $\DIAMOND A \leftrightarrow \neg \BOX \neg A$,
\end{tabbing}
then also the pair $(\DIAMOND,\Box)$ is a Galois connection, that is, rules (GC\,${\Box}{\DIAMOND}$) 
and (GC\,${\DIAMOND}{\Box}$) are admissible. Hence, in classical case,  one Galois connection is defined by the other (obtained ``for free''), 
which is not the case in intuitionistic logic; see \cite{DzJaKo10}.
It is proved in \cite{JaKoKo08} that this logic is syntactically equivalent to classical tense logic {\sf K$_t$}, when $\Diamond$, $\Box$, $\DIAMOND$, $\BOX$ are identified
with the tense operators $F$, $G$, $P$, $H$, respectively. Note that algebras corresponding to {\sf K$_t$} are considered in 
\cite{vonKarger95,Venema2007}, for example, and these are generally called  \emph{tense algebras}.

As stated in \cite[p.~54]{simpson1994proof}, it is routine to derive $\Diamond A \leftrightarrow \neg \Box \neg A$ in {\sf IK},  together with the Law of the Excluded Middle. Since {\sf Int2GC+FS} is an extension of the fusion {\sf IK}\FUSE{\sf IK} of two copies of intuitionistic modal logic {\sf IK},
then it is clear that classical logic with two Galois connection pairs $(\Diamond, \BOX)$ and $(\DIAMOND,\Box)$, which
are interlinked with (FS1) and (FS2), denoted here {\sf Cl2GC+FS},  satisfies (DM1) and (DM2). On the other hand, in {\sf K$_t$}, the pairs $(F,H)$ and $(P,G)$ 
form Galois connections, and axioms (FS1) and (FS2) are provable. Therefore, we can write:
\begin{center}
  {\sf K$_t$} = {\sf Cl2GC+FS}
\end{center}
Observe that {\sf K$_t$} can be defined as the fusion {\sf K}\FUSE{\sf K} extended with Brouwerian
axioms (BR1)--(BR4), denoted by {\sf K}\FUSE{\sf K+BR}. In summary, we have: 

\begin{corollary} \label{Prop:Cl equivalence}
{\sf K$_t$} = {\sf Cl2GC+FS} = {\sf K}\FUSE{\sf K+BR}.
\end{corollary}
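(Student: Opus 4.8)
The plan is to obtain the two equalities in the chain separately and then concatenate them, leaning throughout on the intuitionistic case already established in Corollary~\ref{Prop:Int equivalence} and on the collapse of intuitionistic modal logic {\sf IK} to classical modal logic {\sf K} under the Law of the Excluded Middle. The first equality, {\sf K$_t$} = {\sf Cl2GC+FS}, has in effect already been argued in the paragraph preceding the statement, and I would simply record it: since {\sf Cl2GC+FS} extends the fusion {\sf IK}\FUSE{\sf IK}, the routine derivation cited from \cite[p.~54]{simpson1994proof} shows it proves the De~Morgan equivalences (DM1) and (DM2), so its four operators are pairwise De~Morgan dual and the classical reading of Ewald's axiomatisation becomes derivable; conversely, in {\sf K$_t$} the pairs $(F,H)$ and $(P,G)$ are Galois connections and (FS1), (FS2) hold, so the Galois-connection rules and Fischer Servi axioms defining {\sf Cl2GC+FS} are all available. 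Hence the two logics have the same theorems.

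For the second equality, {\sf Cl2GC+FS} = {\sf K}\FUSE{\sf K+BR}, I would rerun the proof of Proposition~\ref{Prop:equivalence} with the base logic changed from {\sf Int} to {\sf Cl}. That argument uses the propositional base only through modus ponens and the law of syllogism (cf.\ Remark~\ref{Rem:DefiningGalois}), both of which {\sf Cl} has, so it yields {\sf Cl2GC+FS} $=$ $\mathcal{K}\FUSE\mathcal{K}$ extended by (BR1)--(BR4), where $\mathcal{K}$ denotes ``{\sf IK} built over classical, rather than intuitionistic, propositional logic''. It then remains to identify $\mathcal{K}$ with {\sf K}: one inclusion is immediate because {\sf K} extends {\sf IK} by LEM, and for the other one checks that (IK1)--(IK5) together with LEM yield $\Diamond A\leftrightarrow\neg\Box\neg A$ (again the Simpson derivation), which lets (IK4) and (IK5) be rewritten as instances of the classical normality axiom $\Box(A\to B)\to(\Box A\to\Box B)$, while the monotonicity and necessitation rules of {\sf IK} supply the remaining rules of {\sf K}. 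Since fusion and the addition of the Brouwerian schemes are monotone in the component logics, replacing each $\mathcal{K}$-component by {\sf K} gives {\sf Cl2GC+FS} = {\sf K}\FUSE{\sf K+BR}.

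Finally I would assemble the two equalities into the stated chain, observing that {\sf K}\FUSE{\sf K+BR} = {\sf K$_t$} is also the classical specialisation of Davoren's theorem {\sf IK}\FUSE{\sf IK+BR} = {\sf IK$_t$} (\cite{Davoren}), so everything closes up consistently. The one point I expect to need care, and the only one I would write out in full, is the collapse $\mathcal{K}$ = {\sf K}: over classical propositional logic one must verify that the Fischer Servi axioms (IK4), (IK5) do not produce anything stronger than classical {\sf K}, and this is exactly where De~Morgan duality is used, since it turns those axioms into instances of the ordinary {\sf K}-axiom. The rest is bookkeeping about fusions that parallels Section~\ref{Sec:Axiomatisations}.
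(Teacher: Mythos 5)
Your proposal is correct, and for the first equality it is essentially the paper's own argument: the paper also obtains (DM1), (DM2) in {\sf Cl2GC+FS} from Simpson's observation that $\Diamond A\leftrightarrow\neg\Box\neg A$ is derivable in {\sf IK} plus the Law of the Excluded Middle, and conversely uses that in {\sf K$_t$} the pairs $(F,H)$, $(P,G)$ are Galois connections satisfying (FS1), (FS2); the only cosmetic difference is that the paper anchors the identification with {\sf K$_t$} in the result of \cite{JaKoKo08} rather than in a classical reading of Ewald's list. Where you genuinely diverge is the second equality. The paper gets {\sf Cl2GC+FS} = {\sf K}\FUSE{\sf K+BR} by simply observing that {\sf K$_t$} = {\sf K}\FUSE{\sf K+BR} is the standard presentation of classical tense logic (the classical instance of Davoren's theorem) and chaining it with {\sf K$_t$} = {\sf Cl2GC+FS}, whereas you reprove it by rerunning Proposition~\ref{Prop:equivalence} over {\sf Cl} and then collapsing ``{\sf IK} over a classical base'' to {\sf K}. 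Your route is sound and more self-contained, but it takes on an obligation the paper's shortcut avoids: for the inclusion of {\sf K} in the classical-base {\sf IK} you must derive the normality axiom and necessitation from (IK1)--(IK5) with monotonicity (note that the paper's presentation of {\sf IK} lists no necessitation rule), and your sketch emphasizes the conservativity direction instead. Both ingredients are in fact available: $\Box(A\to B)\wedge\Box A\to\Box((A\to B)\wedge A)\to\Box B$ gives normality from (IK2) and monotonicity, and $\Box\top$ follows from (IK3) and (IK5) with $A=B=\bot$, whence (RN$\Box$) is derivable; together with the Simpson duality this gives the collapse, and your fusion bookkeeping then closes the argument. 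In short: the same proof as the paper for the first equality, and a valid, more laborious substitute for the paper's one-line appeal to {\sf K$_t$} = {\sf K}\FUSE{\sf K+BR} for the second.
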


In conclusion, if we add to intuitionistic logic {\sf Int} 
two Galois connections $(\Diamond, \BOX)$ and $(\DIAMOND, \Box)$ that are connected using
Fischer Servi axioms (FS1) and (FS2), then we get the intuitionistic tense logic
{\sf IK$_t$}. Analogously, if two Galois connections combined with axioms 
(FS1) and (FS2) are added to classical logic, we obtain the classical tense logic {\sf K$_t$}. Here we discuss how
for each intermediate logic {\sf L}, we can define the corresponding \emph{{\sf L}-tense logic} {\sf LK$_t$}.

An \emph{intermediate logic} is a propositional logic extending intuitionistic logic.
Classical logic {\sf Cl} is the strongest intermediate logic and it is obtained from
{\sf Int} by extending the axioms of {\sf Int} by the ``Law of the excluded middle'' $A \vee \neg A$,
or equivalently, by the ``Double negation elimination'' $\neg\neg A \to A$ or by 
``Peirce's law'' $((A \to B) \to A) \to A$. 
There exists a continuum of different intermediate logics. For example, the G\"{o}del--Dummett
logic {\sf G} is obtained from {\sf Int} by adding the axiom $(A \to B) \vee  (B \to A)$. 
For more examples of intermediate logics and their semantics; see \cite{ChaZak97,GhiMig99}.

We  denote by {\sf L} any intermediate logic, that is, ${\sf Int} \subseteq {\sf L} \subseteq {\sf Cl}$.
We can write that for any intermediate logic {\sf L},
\[ \mbox{\sf IK}_t \subseteq \mbox{\sf L2GC+FS} \subseteq \mbox{\sf K}_t .\]
Because Ewald's {\sf IK$_t$} is commonly accepted as the intuitionistic analogue of the classical
tense logic {\sf K$_t$},  taking into account the equivalences $\text{\sf IK}_t = \text{\sf Int2GC+FS}$ and 
$\mbox{\sf K}_t = \mbox{\sf Cl2GC+FS}$, the logic {\sf L2GC+FS} can be regarded as the  
{\sf L}-tense logic {\sf LK$_t$} for any intermediate logic {\sf L}.  Then, as one of the main results of this work,
we have a general uniform method (A) of building {\sf L}-tense logic for any intermediate logic {\sf L} by 
setting $\mbox{\sf LK$_t$} = \mbox{\sf L2GC+FS}$, that is, {\sf L} added with two Galois connection pairs
combined using Fischer Servi axioms. Moreover, since the equivalences  $\text{\sf K}_t = \text{\sf Cl2GC+FS}$ 
and $\text{\sf IK}_t = \text{\sf Int2GC+FS}$ were shown syntactically, {\sf L}-tense logics exist 
independent of whether they are Kripke complete or canonical, or not. This can be presented as the following theorem.

\begin{theorem}
For any intermediate logic {\sf L}, there is an {\sf L}-tense logic {\sf LK$_t$}, which is {\sf L} endowed with two independent
Galois connections connected by Fischer Servi axioms {\rm (FS1)} and {\rm (FS2)}.
\end{theorem}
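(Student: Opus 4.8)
The plan is to turn the constructions already carried out for {\sf Int} and {\sf Cl} into a definition, and then to check that every ingredient of that definition is supplied by an arbitrary intermediate logic. First I would fix {\sf L} with ${\sf Int} \subseteq {\sf L} \subseteq {\sf Cl}$ and define {\sf L2GC+FS} to be the smallest logic in the language of {\sf Int2GC+FS} (the propositional language augmented with the four unary operators $\Diamond$, $\BOX$, $\DIAMOND$, $\Box$) that contains all theorems of {\sf L}, is closed under modus ponens and the four Galois-connection rules (GC\,${\BOX}{\Diamond}$), (GC\,${\Diamond}{\BOX}$), (GC\,${\Box}{\DIAMOND}$), (GC\,${\DIAMOND}{\Box}$), and contains the schemes (FS1) and (FS2). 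Following Remark~\ref{Rem:DefiningGalois}, one may present the same system equivalently as {\sf L} together with the Brouwerian axioms (BR1)--(BR4), the monotonicity rules for all four operators, and (FS1)--(FS4); this equivalent presentation is legitimate precisely because every intermediate logic extends {\sf Int} and hence has modus ponens and the law of syllogism, which is the exact hypothesis of that remark. We then set $\mbox{\sf LK$_t$} := \mbox{\sf L2GC+FS}$.

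Next I would check that this definition is coherent with the two endpoints and monotone in between. By Theorem~\ref{Thm:Equivalence}, taking ${\sf L} = {\sf Int}$ gives ${\sf Int2GC+FS} = {\sf IK}_t$; by Corollary~\ref{Prop:Cl equivalence}, taking ${\sf L} = {\sf Cl}$ gives ${\sf Cl2GC+FS} = {\sf K}_t$. Monotonicity of the construction in its input logic is immediate from the ``smallest logic containing $\ldots$'' description: if ${\sf L}' \subseteq {\sf L}''$, then the system built over ${\sf L}''$ contains all theorems of ${\sf L}' \subseteq {\sf L}''$, is closed under the same rules, and contains (FS1), (FS2), so it is one of the logics whose intersection defines the system built over ${\sf L}'$; hence the ${\sf L}'$-system is contained in the ${\sf L}''$-system. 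Applying this along ${\sf Int} \subseteq {\sf L} \subseteq {\sf Cl}$ yields $\mbox{\sf IK}_t \subseteq \mbox{\sf LK$_t$} \subseteq \mbox{\sf K}_t$, placing every {\sf L}-tense logic between Ewald's intuitionistic tense logic and classical tense logic, exactly as asserted earlier in this section. Finally, since the equivalences ${\sf IK}_t = {\sf Int2GC+FS}$ and ${\sf K}_t = {\sf Cl2GC+FS}$ were proved purely syntactically, with no appeal to Kripke semantics, the system {\sf LK$_t$} is a well-defined deductive calculus whether or not {\sf L} (or {\sf LK$_t$} itself) is Kripke complete or canonical.

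I do not expect a genuine obstacle here: the statement repackages constructions and equivalences that are already in hand, and its content is the observation that the only properties of the base logic those constructions quietly use — modus ponens and the law of syllogism — are automatic for every intermediate logic. The one point deserving a moment's care is that adjoining the four operators, the Galois-connection rules, and (FS1)--(FS2) to {\sf L} should not disturb {\sf L} itself, i.e. the enlarged system ought to be conservative over {\sf L}; but the present theorem claims only the \emph{existence} of {\sf LK$_t$}, and conservativeness is established separately, via the algebraic completeness of {\sf L2GC+FS}, later in the paper, so it is not needed for this proof.
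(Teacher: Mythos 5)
Your proposal is correct and follows essentially the same route as the paper, which presents this theorem as a summary of the preceding construction: set $\mbox{\sf LK$_t$} := \mbox{\sf L2GC+FS}$, invoke the syntactic equivalences $\text{\sf IK}_t = \text{\sf Int2GC+FS}$ and $\text{\sf K}_t = \text{\sf Cl2GC+FS}$, observe $\mbox{\sf IK}_t \subseteq \mbox{\sf L2GC+FS} \subseteq \mbox{\sf K}_t$, and note that everything is purely syntactic so no Kripke completeness or canonicity of {\sf L} is needed. Your added remarks (monotonicity of the construction in the base logic, applicability of Remark~\ref{Rem:DefiningGalois} because every intermediate logic has modus ponens and the law of syllogism, and the deferral of conservativeness to the algebraic completeness results) are faithful elaborations of points the paper states without detailed proof.
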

 
\section{Heyting algebras with Galois connections} \label{Sec:Algebras}

E.~Or{\l}owska and I.~Rewitzky \cite{OrlRew07} defined a \emph{Heyting algebra with modal operators} as
a Heyting algebra $(H,\vee,\wedge,\to,0,1)$ equipped with unary operators $\Diamond$ and $\Box$ satisfying
for all $x,y \in H$:
\begin{equation} \label{EQ:Additive}
\Diamond x  \vee \Diamond y = \Diamond(x \vee y)
\text{ \quad and \quad } 
\Box x \wedge \Box y = \Box (x \wedge y).
\end{equation}
These algebras are called \emph{HM-algebras}, for short. In addition, they defined \emph{HK-algebras} as
HM-algebras satisfying
\begin{equation} \label{Eq:Normal}
\Diamond 0 = 0
\text{ \quad and \quad }
\Box 1 = 1.
\end{equation}

We introduced in \cite{DzJaKo10} \emph{HGC-algebras} as Heyting algebras provided with an order-preserving 
Galois connection $(\Diamond, \BOX)$. Equationally HGC-algebras can be defined as algebras
$(H,\vee,\wedge,\to,0,1,\Diamond, \BOX)$ such that $(H,\vee,\wedge,\to,0,1)$ satisfies the
identities for Heyting algebras (which can be found in e.g.\@ \cites{BaDw74, BuSa81}), the
operators $\Diamond$ and $\BOX$ satisfy \eqref{EQ:Additive}, and for all $x \in H$,
\begin{equation} \label{Eq:Connectivity}
x \leq \BOX \Diamond x 
\text{ \quad and \quad }
\Diamond \BOX x \leq x.
\end{equation}
By definition, HGC-algebras are HM-algebras, but HGC-algebras are also HK-algebras, because
$0 \leq \BOX 0$ implies $\Diamond 0 \leq 0$ and $\Diamond 1 \leq 1$ gives $1 \leq \BOX 1$.
Thus, $\Diamond$ and $\BOX$ satisfy \eqref{Eq:Normal}.

In \cite{DzJaKo10}, we proved that {\sf IntGC} is algebraizable in terms of HGC-algebras.
More precisely, any valuation $v$ assigning to propositional variables elements of an
HGC-algebra can be extended to all formulas inductively by the following way:
\begin{align*}
 v(A\wedge B) &= v(A) \wedge v(B)  & v(A \vee B) &= v(A) \vee v(B) \\
 v(A \to B) &= v(A) \to v(B)       & v(\neg A) &= \neg v(A) \\
 v(\Diamond A) &= \Diamond v(A)    & v(\BOX A) &= \BOX v(A).
\end{align*}
Then, a formula $A$ is provable in {\sf IntGC} if and only if $v(A) = 1$ for all
valuations $v$ on any HGC-algebra.

\medskip

We define \emph{H2GC-algebras} as structures $(H,\vee,\wedge,\to,0,1, \Diamond, \Box, \DIAMOND, \BOX)$
such that $(H,\vee,\wedge,\to,0,1,\Diamond, \BOX)$ and $(H,\vee,\wedge,\to,0,1,\DIAMOND, \Box)$  
are HGC-algebras. Similarly as in case of {\sf IntGC}, we can show, by applying Lindenbaum--Tarski
algebras, that {\sf Int2GC} is complete with respect to {H2GC}-algebras, that is,
a formula $A \in \Phi$ is provable in {\sf Int2GC} if and only if $v(A) = 1$  for all
valuations $v$ on any H2GC-algebra.

Or{\l}owska and Rewitzky \cite{OrlRew07} studied also an extension of HK-algebras, called
\emph{HK1-algebras}, that are algebraic counterparts of the logic {\sf IK}. They extended HK-algebras
by the following two conditions that correspond to Fischer Servi axioms (FS1) and (FS2):
\begin{equation}
 \Diamond( x \to\ y) \leq \Box x \to \Diamond y \text{ \quad and \quad } 
 \Diamond x \to \Box y \leq \Box(x \to y) 
\end{equation}
Let us denote for any H2GC-algebra  $(H,\vee,\wedge,\to,0,1, \Diamond, \Box,\DIAMOND, \BOX)$ 
the corresponding conditions by (FS1)--(FS4), that is,
\begin{tabbing}\TABS
({FS}1) \ $\Diamond(x \to y) \leq \Box x \to \Diamond y$  \>({FS}2) \ $\Diamond x \to \Box y  \leq \Box(x \to y)$  \\
({FS}3) \ $\DIAMOND(x \to y) \leq \BOX x \to \DIAMOND y$  \>({FS}4) \ $\DIAMOND x \to \BOX y  \leq \BOX(x \to y)$ 
\end{tabbing}
Note that we used  (FS1)--(FS4) to denote also the corresponding Fischer Servi axioms in logic. This
should not cause any confusion, because the context shows whether we are dealing with logic
axioms or lattice-theoretical conditions.
In addition, we denote by (D1) and  (D2) the conditions corresponding the first condition
of \eqref{Eq:Dunn}, that is,
\begin{tabbing}\TABS
(D1) \  $\Diamond x \wedge \Box y \leq \Diamond( x \wedge y)$
\> (D2) \  $\DIAMOND x \wedge \BOX y \leq \DIAMOND( x \wedge y)$
\end{tabbing}

\begin{proposition} \label{Prop:DunnConditions}
Let   $(H,\vee,\wedge,\to,0,1, \Diamond, \Box,\DIAMOND, \BOX)$ be an H2GC-algebra.
\begin{enumerate}[\rm (a)]
\item Conditions {\rm (FS1)}, {\rm (D1)}, and {\rm (FS4)} are equivalent.
\item Conditions {\rm (FS2)}, {\rm (D2)}, and {\rm (FS3)}  are equivalent.
\end{enumerate}
\end{proposition}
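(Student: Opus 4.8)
The plan is to prove part (a); part (b) then follows immediately by applying (a) to the H2GC-algebra obtained by interchanging the two Galois connections $(\Diamond,\BOX)$ and $(\DIAMOND,\Box)$, since the defining conditions of an H2GC-algebra are symmetric in them and this swap turns (FS1), (D1), (FS4) into (FS3), (D2), (FS2). For (a) I would establish $\text{(FS1)}\Leftrightarrow\text{(D1)}$ and $\text{(FS1)}\Leftrightarrow\text{(FS4)}$ separately.

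For $\text{(FS1)}\Leftrightarrow\text{(D1)}$ I would work entirely inside the underlying Heyting algebra, using residuation $a\wedge b\leq c\iff a\leq b\to c$ to rewrite (FS1) equivalently as $\Diamond(x\to y)\wedge\Box x\leq\Diamond y$. Assuming this, instantiate it with $x$ replaced by $y$ and $y$ replaced by $x\wedge y$: since $y\to(x\wedge y)=y\to x$ and $x\leq y\to x$, monotonicity of $\Diamond$ gives $\Diamond x\leq\Diamond(y\to(x\wedge y))$, and hence $\Diamond x\wedge\Box y\leq\Diamond(x\wedge y)$, which is (D1). Conversely, applying (D1) to $\Diamond(x\to y)$ and $\Box x$ together with $(x\to y)\wedge x\leq y$ and monotonicity of $\Diamond$ yields $\Diamond(x\to y)\wedge\Box x\leq\Diamond y$, i.e. (FS1). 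This step uses only Heyting arithmetic and monotonicity of $\Diamond$.

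For $\text{(FS1)}\Leftrightarrow\text{(FS4)}$ I would transcribe the argument of Proposition~\ref{Prop:FSEquiv} into algebraic inequalities, replacing ``$\vdash A\to B$'' by ``$a\leq b$'', the admissible monotonicity rules by monotonicity of the operators, and the Galois rules by the adjunctions $\Diamond a\leq b\iff a\leq\BOX b$ and $\DIAMOND a\leq b\iff a\leq\Box b$. For $\text{(FS1)}\Rightarrow\text{(FS4)}$: instantiate (FS1) at $x:=\DIAMOND x$, $y:=\BOX y$ to obtain $\Diamond(\DIAMOND x\to\BOX y)\wedge\Box\DIAMOND x\leq\Diamond\BOX y$, then use the connectivity inequalities $x\leq\Box\DIAMOND x$ and $\Diamond\BOX y\leq y$ to get $\Diamond(\DIAMOND x\to\BOX y)\wedge x\leq y$; residuation followed by the adjunction for $(\Diamond,\BOX)$ then gives $\DIAMOND x\to\BOX y\leq\BOX(x\to y)$. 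For the converse: from $\DIAMOND\Box x\leq x$, $y\leq\BOX\Diamond y$ and antitonicity of $\to$ in its first argument one gets $x\to y\leq\DIAMOND\Box x\to\BOX\Diamond y$; applying $\Diamond$, then (FS4) instantiated at $x:=\Box x$, $y:=\Diamond y$, then $\Diamond$ again together with $\Diamond\BOX z\leq z$, yields $\Diamond(x\to y)\leq\Box x\to\Diamond y$, which is (FS1).

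The only point requiring care is the faithful algebraic rendering of the Hilbert-style ``law of syllogism'' manipulations from Proposition~\ref{Prop:FSEquiv} — in particular tracking the antitonicity of the Heyting implication in its first coordinate and keeping straight which direction of each adjunction is being invoked; beyond that the computation is routine, so I expect no genuine obstacle.
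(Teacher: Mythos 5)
Your proposal is correct and follows essentially the paper's own route: the (FS1)$\Leftrightarrow$(D1) equivalence uses the same substitutions ($x:=x\to y,\ y:=x$ in (D1), and $x:=y,\ y:=x\wedge y$ in (FS1)), and the (FS1)$\Leftrightarrow$(FS4) part is exactly the algebraic transcription of Proposition~\ref{Prop:FSEquiv} that the paper invokes, with the Galois rules replaced by the adjunctions and the connectivity inequalities. Your explicit swap-symmetry derivation of (b) from (a) is a slightly tidier packaging of the paper's ``can be proved in a similar way,'' but it is not a genuinely different argument.
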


\begin{proof} We prove only assertion (a), because (b) can be proved in a similar way.
Assume that (D1) holds, and set $x := a \to b$ and $y := a$ in it. We obtain 
$\Diamond (a \to b) \wedge  \Box a  \leq \Diamond (a \wedge (a \to b)) \leq \Diamond b$,
because $a \wedge (a \to b) \leq b$. This gives directly 
$\Diamond (a \to b) \leq \Box a  \to \Diamond b$, and thus
(D1) implies (FS1). Conversely, if we set $x:=b$ and $y := a \wedge b$ in (FS1), we have
$\Diamond a \leq \Diamond (b \to a) = \Diamond (b \to a \wedge b) \leq \Box b \to \Diamond (a \wedge b)$,
because $b \to a \wedge b = b \to a$ and $a \leq b \to a$.
This is equivalent to $\Diamond a \wedge \Box b \leq \Diamond (a \wedge b)$.
Hence, also (FS1) implies (D1).
That (FS1) and (FS4) are equivalent can be shown as in Proposition~\ref{Prop:FSEquiv}.
\end{proof}

Proposition~\ref{Prop:DunnConditions} together with the completeness of {\sf Int2GC} with respect to H2GC-algebras implies that 
\[ \mbox{\sf IK$_t$} = \mbox{\sf Int2GC+FS} = {\sf Int2GC} \PLUS \{ {\rm (D1)} , {\rm (D2)} \}, \]
where (D1) and (D2) denote the axioms:
\begin{tabbing}\TABS
(D1) \ $\Diamond A \wedge \Box B \to \Diamond(A \wedge B)$ 
\>(D2) $\DIAMOND A \wedge \BOX B \to \DIAMOND(A \wedge B)$.
\end{tabbing}

Let us define \emph{H2GC+FS-algebras} as H2GC-algebras satisfying (FS1) and (FS2). Proposition~\ref{Prop:DunnConditions}
has the following corollary.

\begin{corollary}\label{Cor:CharacteringAlgebra} 
Let\/ $\mathbb{H} = (H,\vee,\wedge,\to,0,1, \Diamond, \Box,\DIAMOND, \BOX)$ be an H2GC-algebra.

\begin{enumerate}[\rm (a)]
 \item $\mathbb{H}$ is an H2GC+FS-algebra if and only if  $(H,\vee,\wedge,\to,0,1, \Diamond, \Box)$ and
 $(H,\vee,\wedge,\to,0,1,\DIAMOND, \BOX)$ are HK1-algebras. 
 \item $\mathbb{H}$ is an H2GC+FS-algebra if and only if it satisfies {\rm (D1)} and {\rm (D2)}.
\end{enumerate}
\end{corollary}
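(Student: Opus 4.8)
The plan is to obtain the corollary by assembling Proposition~\ref{Prop:DunnConditions} with the definition of an HK1-algebra and the fact, recorded just after the definition of HGC-algebras, that every HGC-algebra is an HK-algebra; essentially no new computation is required.

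First I would spell out what it means for the two ``mixed'' structures $(H,\vee,\wedge,\to,0,1,\Diamond,\Box)$ and $(H,\vee,\wedge,\to,0,1,\DIAMOND,\BOX)$ to be HK1-algebras. In an H2GC-algebra the pairs $(\Diamond,\BOX)$ and $(\DIAMOND,\Box)$ are Galois connections, so $\Diamond$ and $\DIAMOND$ are additive and normal (being lower adjoints) while $\Box$ and $\BOX$ are multiplicative and co-normal (being upper adjoints). Hence $(H,\ldots,\Diamond,\Box)$ and $(H,\ldots,\DIAMOND,\BOX)$ automatically satisfy \eqref{EQ:Additive}, and since $\Diamond 0 = 0$, $\Box 1 = 1$, $\DIAMOND 0 = 0$, $\BOX 1 = 1$ they in fact satisfy \eqref{Eq:Normal} as well; so both mixed structures are already HK-algebras with no extra hypothesis. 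Consequently the only part of the definition of ``HK1-algebra'' that is not automatic is the pair of Fischer Servi inequalities: $(H,\ldots,\Diamond,\Box)$ is an HK1-algebra exactly when $\Diamond(x\to y)\leq \Box x\to\Diamond y$ and $\Diamond x\to\Box y\leq\Box(x\to y)$ hold, that is, exactly (FS1) and (FS2); and $(H,\ldots,\DIAMOND,\BOX)$ is an HK1-algebra exactly when the corresponding inequalities for $\DIAMOND,\BOX$ hold, that is, exactly (FS3) and (FS4).

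For part (a) I would then invoke Proposition~\ref{Prop:DunnConditions}: in any H2GC-algebra, (FS1), (D1) and (FS4) are equivalent, and (FS2), (D2) and (FS3) are equivalent. Therefore ``(FS1) and (FS2) hold'' is equivalent to ``(FS1), (FS2), (FS3) and (FS4) all hold'', which by the previous paragraph is equivalent to ``both $(H,\ldots,\Diamond,\Box)$ and $(H,\ldots,\DIAMOND,\BOX)$ are HK1-algebras''. Since an H2GC+FS-algebra is by definition an H2GC-algebra satisfying (FS1) and (FS2), part (a) follows. Part (b) is immediate from the same proposition, since (FS1)$\Leftrightarrow$(D1) and (FS2)$\Leftrightarrow$(D2) in an H2GC-algebra, so satisfying (FS1) and (FS2) is the same as satisfying (D1) and (D2).

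The only step demanding any care is the verification in the second paragraph that the cross-pairings $(\Diamond,\Box)$ and $(\DIAMOND,\BOX)$ inherit additivity/normality and multiplicativity/co-normality from the two genuine Galois-connection pairs of the H2GC-algebra, so that these structures are indeed HK-algebras and the notion of HK1-algebra applies to them; once this is in place, the corollary is a bookkeeping consequence of Proposition~\ref{Prop:DunnConditions}.
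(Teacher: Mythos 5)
Your proposal is correct and follows essentially the same route as the paper, which states the corollary as an immediate consequence of Proposition~\ref{Prop:DunnConditions}: the mixed reducts $(H,\vee,\wedge,\to,0,1,\Diamond,\Box)$ and $(H,\vee,\wedge,\to,0,1,\DIAMOND,\BOX)$ are automatically HK-algebras in any H2GC-algebra, so being HK1 amounts exactly to the inequalities (FS1)--(FS4), and the proposition collapses these to (FS1), (FS2), equivalently (D1), (D2). Your explicit check of additivity/normality and multiplicativity/co-normality for the cross-pairings is exactly the bookkeeping the paper leaves implicit.
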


For any H2GC+FS-algebra  $(H,\vee,\wedge,\to,0,1, \Diamond, \Box,\DIAMOND, \BOX)$, a
valuation $v$ is a function $v \colon \mathit{Var} \to H$, which is inductively extended
to all formulas in $\Phi$ as is done above in the case of HGC-algebras. A formula
$A \in \Phi$ is \emph{H2GC+FS-valid} if $v(A) = 1$ for every valuation $v$ on
any H2GC+FS-algebra.

We have shown in \cite{DzJaKo10} that rules (GC\,${\BOX}{\Diamond}$) and (GC\,${\Diamond}{\BOX}$) 
preserve validity, and obviously the same holds for  (GC\,${\Box}{\DIAMOND}$) and (GC\,${\DIAMOND}{\Box}$).
In addition, axioms (FS1) and (FS2) are also valid, because H2GC+FS-algebras are defined by
using analogous conditions. Thus, {\sf Int2GC+FS}-provable formulas are H2GC+FS-valid.

To obtain algebraic completeness, we apply Lindenbaum--Tarski algebras. 
We denote by $\mathcal{F}(\Phi)$ the \emph{algebra of $\Phi$-formulas}, that is, 
the abstract algebra 
\[ \mathcal{F}(\Phi) = (\Phi,\vee,\wedge,\to,\bot,{\Diamond},{\Box},{\DIAMOND}, {\BOX}).\]
We define an equivalence $\equiv$ on $\Phi$ by
\[
A \equiv B  \iff A \leftrightarrow B \text{ is {\sf Int2GC+FS}-provable}.
\]
It is easy to observe that $\equiv$ is a congruences on $\mathcal{F}(\Phi)$. Let  $[A]$ denote the $\equiv$-class of $A$.
We define the \emph{quotient algebra} $\mathcal{F}(\Phi)/{\equiv}$ by introducing the operations:
\begin{align*}
&{[A]} \vee [B] =  [A \vee B], \quad {[A]} \wedge [B]  =  [A \wedge B], \quad {[A]} \to [B]  =  [A \to B],    \\
&{\Diamond [A]} =  [\Diamond A], \quad \Box {[A]} = [\Box A], \quad {\DIAMOND [A]} = [\DIAMOND A], \quad {\BOX [A]}  = [\BOX A] 
\end{align*}

Because H2GC+FS-algebras form an equational class, $\mathcal{F}(\Phi)/{\equiv}$ forms an H2GC+FS-algebra.
Note that $[\bot]$ and $[\top]$ are the zero and the unit in this algebra. We define a valuation
$v \colon \mathit{Var} \to \Phi / {\equiv}$ by $v(p) = [p]$. By straightforward formula induction, we see that 
$v(A) = [A]$ for all formulas $A \in \Phi$. If now $A \in \Phi$ is H2GC+FS-valid, then $v(A) = [\top]$
in  $\mathcal{F}(\Phi)/{\equiv}$. This means $A \leftrightarrow \top$ and thus $A$ is
{\sf Int2GC+FS}-provable. Therefore, we can write the following completeness theorem.

\begin{theorem} \label{Thm:CompletenessI}
A formula $A \in \Phi$ is {\sf Int2GC}-provable if and only if $A$ is H2GC+FS-valid.
\end{theorem}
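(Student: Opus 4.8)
The plan is to prove the two implications separately: soundness (every provable formula is H2GC+FS-valid) by a direct check, and completeness (every H2GC+FS-valid formula is provable) by a Lindenbaum--Tarski construction. For soundness I would argue that every axiom evaluates to the unit and every rule preserves the property ``$v(A)=1$ for all valuations $v$ into any H2GC+FS-algebra''. The intuitionistic axioms together with modus ponens are handled by the Heyting-algebra reduct that every H2GC+FS-algebra carries; the Galois-connection rules (GC\,$\BOX\Diamond$), (GC\,$\Diamond\BOX$), (GC\,$\Box\DIAMOND$), (GC\,$\DIAMOND\Box$) preserve validity because the connectivity inequalities \eqref{Eq:Connectivity} state precisely that $(\Diamond,\BOX)$ and $(\DIAMOND,\Box)$ are order-preserving Galois connections, exactly as was verified for a single pair in \cite{DzJaKo10}; and the Fischer Servi axioms (FS1), (FS2) are valid because H2GC+FS-algebras are, by definition, required to satisfy the corresponding inequalities. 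This direction is therefore essentially immediate once the algebra class has been set up.

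For the converse I would form the quotient $\mathcal{F}(\Phi)/{\equiv}$ of the formula algebra by the relation $A \equiv B$ iff $A \leftrightarrow B$ is provable, and show that it is itself an H2GC+FS-algebra in which the canonical valuation separates theorems from non-theorems. First I would confirm that $\equiv$ is a congruence for all of $\vee,\wedge,\to,\Diamond,\Box,\DIAMOND,\BOX$: for the binary connectives this is the standard intuitionistic replacement property, and for the four unary operators it follows from the monotonicity rules (RM$\Diamond$), (RM$\BOX$), (RM$\DIAMOND$), (RM$\Box$) recorded earlier, which turn a provable $A \leftrightarrow B$ into provable equivalences of $\Diamond A,\BOX A,\DIAMOND A,\Box A$ with their $B$-counterparts. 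Next I would check that the quotient satisfies every defining condition of an H2GC+FS-algebra: additivity and multiplicativity \eqref{EQ:Additive} descend from the provable distribution biconditionals for $\vee$ and $\wedge$, normality \eqref{Eq:Normal} from $\neg\Diamond\bot$, $\BOX\top$ and their duals, the connectivity inequalities \eqref{Eq:Connectivity} from the provable formulas $A \to \BOX\Diamond A$, $\Diamond\BOX A \to A$ (and the $\DIAMOND,\Box$ analogues), and finally the Fischer Servi inequalities (FS1), (FS2) from the corresponding provable axioms. With these verified, $\mathcal{F}(\Phi)/{\equiv}$ is an H2GC+FS-algebra whose zero and unit are $[\bot]$ and $[\top]$.

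To finish I would prove by a routine induction on formula complexity that the canonical valuation $v(p)=[p]$ satisfies $v(A)=[A]$ for every $A$. Then, if $A$ is H2GC+FS-valid, applying validity to this particular valuation gives $[A]=v(A)=[\top]$, so $A \leftrightarrow \top$ is provable and hence $A$ is provable; combined with soundness this yields the asserted equivalence.

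The step I expect to be the main obstacle is the verification that the quotient is \emph{genuinely} an H2GC+FS-algebra, i.e.\ that each lattice-theoretic defining condition descends from an actually provable scheme with no hidden extra assumption. This is exactly where the earlier syntactic work pays off: Proposition~\ref{Prop:FSEquiv} and Lemma~\ref{Lem:NewProperties} (and, through Proposition~\ref{Prop:DunnConditions}, the equivalence of (FS1), (FS2) with Dunn's conditions (D1), (D2)) guarantee that the algebraic conditions defining H2GC+FS-algebras correspond one-to-one to provable formulas. The delicate point in the bookkeeping is that one must work throughout with provable \emph{equivalence} rather than one-directional provable implication, so that the induced operations on $\equiv$-classes are well defined and the translation between algebraic identities and provable biconditionals is faithful.
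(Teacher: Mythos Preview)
Your proposal is correct and follows essentially the same Lindenbaum--Tarski strategy as the paper: soundness by checking that axioms are valid and the Galois-connection rules preserve validity, and completeness via the quotient $\mathcal{F}(\Phi)/{\equiv}$ with the canonical valuation $v(p)=[p]$. The only cosmetic difference is that where you verify each defining identity of H2GC+FS-algebras individually in the quotient, the paper dispatches this in one line by observing that H2GC+FS-algebras form an equational class, so the Lindenbaum--Tarski algebra automatically lies in that variety; your more explicit bookkeeping is of course equivalent.
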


If we change the underlying logic from intuitionistic to classical, we have
that {\sf Cl2GC+FS} is complete with respect to tense algebras -- this is due to the
standard algebraic completeness theorem of temporal logic {\sf K$_t$} with respect 
to tense algebras.

Results of this section can be equally applied to intermediate logics. It is well known that intuitionistic logic and all 
intermediate logics are algebraizable; see, for example, \cite{BlokPig89}. For instance, the specific axiom $(A \to B) \vee  (B \to A)$
of G{\"o}del--Dummett logic {\sf G} translates into in the identity $(x \to y) \vee  (y \to x) = 1$ extending Heyting algebras. 
For every intermediate logic {\sf L}, there exists a corresponding equational class of {\sf L}-\emph{algebras}.
For each {\sf L}-algebra $(H_{\sf L},\vee,\wedge,\to,0,1)$, we define the corresponding \emph{H$_{\sf L}$2GC+FS-algebra} as an algebra
$(H_{\sf L},\vee,\wedge,\to,0,1,\Diamond,\Box,\DIAMOND,\BOX)$ by using the same identities as in the case of defining H2GC+FS-algebras
from Heyting ones. Clearly, the class of H$_{\sf L}$2GC-algebras is equational.
Since the method of Lindenbaum--Tarski algebras is applicable to any {\sf L2GC+FS}-logic in a straightforward way, we get the algebraic completeness. 
 
\begin{corollary} \label{:CompletenessI} For every intermediate logic {\sf L}, 
a formula $A \in \Phi$ is {\sf L2GC+FS}-provable if and only if $A$ is valid in every H$_{\sf L}$2GC--algebra.
\end{corollary}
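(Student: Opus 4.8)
The plan is to generalise verbatim the Lindenbaum--Tarski argument behind Theorem~\ref{Thm:CompletenessI}, replacing the Heyting reduct by an {\sf L}-algebra and leaning on the algebraizability of {\sf L} recorded just above. By an H$_{\sf L}$2GC-algebra I understand the structure introduced immediately before the statement: an {\sf L}-algebra $(H_{\sf L},\vee,\wedge,\to,0,1)$ expanded by two Galois connection pairs $(\Diamond,\BOX)$ and $(\DIAMOND,\Box)$ subject to \emph{the same identities} that define H2GC+FS-algebras from Heyting ones. In particular the Fischer Servi conditions {\rm (FS1)} and {\rm (FS2)} are built into the definition, so this class is equational and closed under homomorphic images and quotients.

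First I would settle soundness, checking that every axiom scheme of {\sf L2GC+FS} is valid in each H$_{\sf L}$2GC-algebra and that every rule preserves validity. The {\sf L}-axioms translate, under any valuation, into the defining identities of the equational class of {\sf L}-algebras, which hold in the {\sf L}-reduct of any H$_{\sf L}$2GC-algebra; the intuitionistic base together with {\rm (MP)} is handled exactly as for HGC-algebras in \cite{DzJaKo10}. The four Galois-connection rules preserve validity by the argument of \cite{DzJaKo10} applied to each pair separately, and {\rm (FS1)}, {\rm (FS2)} are valid because the corresponding lattice-theoretic inequalities hold in every H$_{\sf L}$2GC-algebra by definition of the class.

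Next I would run the completeness half by the quotient construction. Setting $A \equiv B$ to mean that $A \leftrightarrow B$ is {\sf L2GC+FS}-provable yields a congruence on $\mathcal{F}(\Phi)$, and the task is to show that $\mathcal{F}(\Phi)/{\equiv}$ is itself an H$_{\sf L}$2GC-algebra. Its {\sf L}-reduct satisfies every defining identity of {\sf L} because each {\sf L}-axiom is {\sf L2GC+FS}-provable; the two operator pairs form Galois connections because the Brouwerian schemes $A\to\BOX\Diamond A$, $\Diamond\BOX A\to A$, $A\to\Box\DIAMOND A$, $\DIAMOND\Box A\to A$ are provable; and {\rm (FS1)}, {\rm (FS2)} hold as identities because the corresponding axioms are provable. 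Evaluating the canonical valuation $v(p)=[p]$ gives $v(A)=[A]$ by formula induction, so validity of $A$ in this single H$_{\sf L}$2GC-algebra already forces $[A]=[\top]$, that is, {\sf L2GC+FS}-provability of $A$.

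The hard part will be the one genuinely new step: certifying that the {\sf L}-reduct of the Lindenbaum--Tarski algebra really lies in the equational class of {\sf L}-algebras. This is exactly where algebraizability of {\sf L} is indispensable, since it guarantees that provability of the {\sf L}-axioms inside the enriched calculus {\sf L2GC+FS} is enough to force the defining {\sf L}-identities on the reduct; together with the provability of the Galois-connection and Fischer Servi schemes this places the quotient in the equational H$_{\sf L}$2GC class. Everything else is the routine transfer of the intuitionistic case, so I expect no further obstacle.
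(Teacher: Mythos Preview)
Your proposal is correct and follows essentially the same approach as the paper: the paper does not give an explicit proof of this corollary but simply remarks that the Lindenbaum--Tarski method applies straightforwardly to any {\sf L2GC+FS}, relying on the algebraizability of intermediate logics and the equational nature of the class of H$_{\sf L}$2GC-algebras. Your write-up is a faithful and more detailed execution of exactly this argument, with the correct identification of the only non-routine point (that the {\sf L}-reduct of the quotient lands in the variety of {\sf L}-algebras).
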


Very often completeness for an intermediate logic {\sf L} is stated for a narrower class than the class of all {\sf L}-algebras.
For instance,  G\"{o}del--Dummett logic {\sf G} is complete with respect to the class of finite chains, and in \cite{DzJaKo12},
we showed the finite model property of {\sf IntGC}. However, here we will not consider algebraic completeness of {\sf L2GC+FS}-logics 
with respect to these kinds of narrower classes. 

Let $\Phi_0$ denote the set of propositional formulas of intuitionistic logic only (thus not containing
$\Diamond, \Box,\DIAMOND, \BOX$). In  \cite[Prop.~4.6]{DzJaKo10}, we proved that {\sf IntGC} is conservative over
{\sf Int}, and analogously we can prove the following theorem.

\begin{theorem} \label{Thm:Conservativeness} 
For every intermediate logic {\sf L},
a formula $A \in \Phi_0$ is {\sf L2GC+FS}-provable if and only if $A$ is provable in intermediate propositional logic {\sf L}.
\end{theorem}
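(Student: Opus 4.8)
The plan is to prove the conservativeness of {\sf L2GC+FS} over {\sf L} by adapting the argument of \cite[Prop.~4.6]{DzJaKo10}, which is fundamentally an algebraic (semantic) argument rather than a proof-theoretic one. First I would observe that, by Corollary~\ref{:CompletenessI}, it suffices to show that if $A \in \Phi_0$ is \emph{not} provable in {\sf L}, then there is an H$_{\sf L}$2GC+FS-algebra on which $A$ fails. By algebraic completeness of {\sf L} itself, the non-provability of $A$ in {\sf L} gives an {\sf L}-algebra $(H_{\sf L},\vee,\wedge,\to,0,1)$ and a valuation $v_0$ with $v_0(A)\neq 1$.

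The crux is then to expand this {\sf L}-algebra to an H$_{\sf L}$2GC+FS-algebra, i.e.\ to define operators $\Diamond,\Box,\DIAMOND,\BOX$ on $H_{\sf L}$ satisfying \eqref{EQ:Additive}, \eqref{Eq:Connectivity}, the analogous identities for $(\DIAMOND,\Box)$, and the conditions (FS1) and (FS2). The obvious choice, and the one I expect works, is to take all four operators to be the identity map: $\Diamond x = \Box x = \DIAMOND x = \BOX x = x$ for all $x \in H_{\sf L}$. One checks readily that $(\mathrm{id},\mathrm{id})$ is an order-preserving Galois connection (since $x \le y \iff x \le y$, and \eqref{Eq:Connectivity} reads $x \le x$ both ways), that the identity is additive and multiplicative, and that (FS1)--(FS2) become $a \to b \le a \to b$, hence hold trivially. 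Thus $(H_{\sf L},\vee,\wedge,\to,0,1,\mathrm{id},\mathrm{id},\mathrm{id},\mathrm{id})$ is an H$_{\sf L}$2GC+FS-algebra. Extending $v_0$ to a valuation $v$ on this expanded algebra and noting that $v$ agrees with $v_0$ on all formulas in $\Phi_0$ (the modal clauses never fire), we get $v(A) = v_0(A) \neq 1$, so $A$ is not H$_{\sf L}$2GC+FS-valid, and hence not {\sf L2GC+FS}-provable.

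For the converse direction, one uses that {\sf L2GC+FS} extends {\sf L} and is closed under the rules of {\sf L}, so every {\sf L}-provable $\Phi_0$-formula is {\sf L2GC+FS}-provable; this is immediate from the definition of {\sf L2GC+FS} as $\mathcal{L} \PLUS \Gamma$ in the sense explained in the Introduction. I do not anticipate any real obstacle here: the only point requiring minor care is to confirm that the trivial (identity) expansion indeed lands in the equational class of H$_{\sf L}$2GC+FS-algebras for \emph{every} intermediate {\sf L}, but since the defining identities of this class are just the {\sf L}-algebra identities together with the Galois-connection, additivity, normality, and Fischer Servi identities for the operators --- all of which the identity map satisfies regardless of the underlying {\sf L}-algebra --- this is automatic. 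One could alternatively phrase the whole argument syntactically, by showing that any {\sf L2GC+FS}-derivation of a $\Phi_0$-formula can be transformed into an {\sf L}-derivation by replacing every modal-prefixed subformula according to the identity interpretation and checking that each application of (GC), (FS1), (FS2), or (MP) maps to a valid step; but the algebraic route is cleaner and mirrors \cite{DzJaKo10}, so that is the one I would present.
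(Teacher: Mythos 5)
Your proposal is correct and follows essentially the route the paper intends: the paper proves this theorem ``analogously'' to the conservativeness of {\sf IntGC} over {\sf Int}, namely by taking an {\sf L}-algebra and a valuation refuting $A$ and expanding it with trivially defined Galois connections (your identity operators do the job, satisfying the Galois, additivity/normality, and Fischer Servi identities automatically), so that soundness of {\sf L2GC+FS} with respect to H$_{\sf L}$2GC+FS-algebras yields non-provability, while the converse direction is immediate since {\sf L2GC+FS} extends {\sf L}.
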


\begin{example} \label{Exa:Motivation}
(a) As a motivating example for H2GC+FS-algebras, we consider fuzzy modal operators on complete 
Heyting algebras. These are also closely connected to fuzzy Galois connections (see e.g.\@ \cites{Belo99,GeoPop04}). 

A \emph{complete Heyting algebra} is a Heyting algebra $(H,\vee,\wedge,\to,0,1)$
such that its underlying lattice $(H,\leq)$ is complete.  It is well known \cites{Grat98,RasSik68}
that a complete Heyting algebra $H$ satisfies the \emph{join-infinite distributive law}:  
for any $S \subseteq H$ and $x \in H$,
$x \wedge \left ( \bigvee S \right ) = \bigvee \{ x \wedge y \mid y \in S \}$.

Fuzzy sets on complete Heyting algebras generalise fuzzy sets on the  unit interval $[0,1]$.
Let $U$ be some universe of discourse. Each map $\varphi \colon U \to H$ is called
a \emph{fuzzy set} on $U$. For any object $x$, $\varphi(x)$ is the
\emph{grade of membership}. We denote by $H^U$ the set of all fuzzy sets
on $U$. Also $H^U$ forms a complete Heyting algebra in which the operations
are defined pointwise.

Let $R$ be a fuzzy relation on $U$, that is, $R$ is a mapping from $U \times U$ to $H$.
For a fuzzy set $\varphi \in H^U$, we may define the fuzzy sets
$\Diamond \varphi$, $\Box \varphi$, $\DIAMOND \varphi$, $\BOX \varphi$ by
setting for all $x \in U$:
\begin{align*}
\Diamond \varphi (x) & =  \bigvee_{y \in U} \{ R(x,y) \wedge \varphi(y) \} 
& \Box \varphi(x) & =  \bigwedge_{y \in U} \{  R(x,y) \to \varphi(y) \} \\
\DIAMOND \varphi (x) & =  \bigvee_{y \in U} \{ R(y,x) \wedge \varphi(y) \} 
& \BOX \varphi (x) & =  \bigwedge_{y \in U} \{  R(y,x) \to \varphi(y) \}
\end{align*}

We show first that $({\Diamond},{\BOX})$ and $({\DIAMOND},{\Box})$
are Galois connections on $H^U$. Indeed, suppose $\varphi$ and $\psi$ are fuzzy sets such that
$\varphi \leq \psi$. Then, for all $y \in U$, $R(x,y) \wedge \varphi(y) \ \leq  \ R(x,y) \wedge \psi(y)$
and this implies
\[ \Diamond \varphi (x) = \bigvee_{y \in U}  \{ R(x,y) \wedge \varphi(y) \} \leq  
\bigvee_{y \in U}  \{ R(x,y) \wedge \psi(y) \} = \Diamond \psi (x). \]
Similarly, $ R(y,x) \to \varphi(y) \ \leq \ R(y,x) \to \psi(y) $ for all 
$y \in U$. Thus,
\[ \BOX \varphi (x) =  \bigwedge_{y \in U} \{  R(y,x) \to \varphi(y) \} \leq
\bigwedge_{y \in U} \{  R(y,x) \to \psi(y) \} =  \BOX \psi(x). \]
So, $\Diamond$ and $\BOX$ are order-preserving. By definition, for all $x \in U$, 
\begin{eqnarray*}
\Diamond \BOX \varphi(x) & = & \bigvee_{y \in U}  \{ R(x,y) \wedge \BOX \varphi (y)  \}  
=  \bigvee_{y \in U} \Big \{ R(x,y) \wedge  \bigwedge_{z \in U} \{  R(z,y) \to \varphi(z) \} \, \Big \} \\
& \leq & \bigvee_{y \in U} \{ R(x,y) \wedge  ( R(x,y) \to \varphi(x) \, ) \, \} 
 \leq  \bigvee_{y \in U} \{  \varphi(x) \} =  \varphi(x).
\end{eqnarray*}
This means that $\Diamond \BOX \varphi \leq \varphi$. 
Analogously, for any $x \in U$, 
\begin{eqnarray*}
\BOX \DIAMOND \varphi(x) & = & \bigwedge_{y \in U} \{  R(y,x) \to \Diamond \varphi (y) \} 
=  \bigwedge_{y \in U} \Big \{  R(y,x) \to \bigvee_{z \in U} \{ R(y,z) \wedge \varphi(z) \} \, \Big \} \\
& \geq & \bigwedge_{y \in U} \{  R(y,x) \to ( \, R(y,x) \wedge \varphi(x) \, ) \} 
\geq  \bigwedge_{y \in U} \{  \varphi(x) \} =  \varphi(x). 
\end{eqnarray*}
Thus, also $\varphi \leq \BOX \Diamond \varphi$. We have that  $({\Diamond},{\BOX})$
is a Galois connection. Similarly, we can show that  $({\DIAMOND},{\Box})$ is a Galois connection.

Next we show that (D1) holds. For all $x,y \in U$, we have
\begin{align*}
 R(x,y) \wedge \varphi(y)  \wedge \Box \psi(x) &=
 R(x,y) \wedge \varphi(y)  \wedge \bigwedge_{z \in U} \{  R(x,z) \to \psi (z) \}\\
 & \leq R(x,y) \wedge \varphi(y) \wedge (R(x,y) \to \psi(y)) \\
 & = (R(x,y) \wedge (R(x,y) \to \psi(y)))  \wedge \varphi(y) \\
 & = R(x,y) \wedge \psi(y)  \wedge \varphi(y) \\
 & = R(x,y) \wedge (\varphi \wedge \psi)(y)\\
 & \leq \bigvee_{z \in U} \{ R(x,z) \wedge (\varphi \wedge \psi)(z) \}\\
 & = \Diamond (\varphi \wedge \psi)(x).
\end{align*}
Hence, for all $x,y \in U$, $R(x,y) \wedge \varphi(y) \wedge \Box \psi(x) \leq  \Diamond (\varphi \wedge \psi)(x)$.
Because complete Heyting algebras satisfy the join-infinite distributive law, we have 
\begin{align*}
(\Diamond \varphi \wedge \Box \psi)(x) & = \Diamond \varphi(x) \wedge \Box \psi(x)  
= \bigvee_{y \in U} \{ R(x,y) \wedge \varphi(y)\} \wedge \Box \psi(x) \\
& = \bigvee_{y \in U} \{ R(x,y) \wedge \varphi(y) \wedge \Box \psi(x) \} 
\leq \Diamond (\varphi \wedge \psi)(x). 
\end{align*}
Thus, $\Diamond \varphi \wedge \Box \psi \leq  \Diamond (\varphi \wedge \psi)$.
Assertion (D2) can be proved similarly.

\medskip

(b) The instances
\begin{equation}
 \Box (a \vee b) \leq \Box a \vee \Diamond b \quad \text{ and } \quad \BOX(a \vee b) \leq \BOX a \vee \DIAMOND b
\end{equation}
of  Dunn's second axiom of \eqref{Eq:Dunn} are false in some H2GC+FS-algebras of fuzzy modalities.

Namely, let $U = \{x,y\}$ and consider the finite (and hence complete) Heyting algebra $\mathbf{2}^2 \oplus 1$, 
that is, $\mathbb{H} = \{0, a, b, c, 1\}$ is the Heyting algebra with the order 
$0 < a, b < c < 1$, where $a$ and $b$ are incomparable. Note that $\neg a = b$ and $\neg b = a$.

We define two fuzzy sets $\varphi, \psi$ on $U$ by setting $\varphi(u) = 0 $ and $ \psi(u) = 1$ for all $u \in U$. 
A fuzzy relation $R \colon U \times U \to H$ is defined by  $R(x, x) = R(y,y) = a$ and $R(x, y) = R(y,x) = b$. Then,
\[
\Box (\varphi \vee \psi) (x) = \bigwedge_{u \in U}  (R(x,u) \to (\varphi \vee \psi) (u)) = 1,
\]
but 
\begin{align*}
\Box \varphi (x) \vee  \Diamond \psi (x) &= \bigwedge_{u \in U}  (R(x,u) \to  \varphi (u)) \vee \bigvee_{u \in U} (R(x,u) \wedge \psi (u)) \\
&= (\neg a \wedge \neg b) \vee (a \vee b) = c.
\end{align*}
Hence, $\Box (\varphi \vee \psi) \leq \Box \varphi \vee \Diamond \psi$ is not satisfied. 
Similarly, $\BOX (\varphi \vee \psi) (y) = 1$ \ and \ $\BOX \varphi (y) \vee  \DIAMOND \psi (y) = c$,
that is,  $\BOX (\varphi \vee \psi) \leq \BOX \varphi \vee \DIAMOND \psi$ is not satisfied.

\medskip
(c) In Lemma~\ref{Lem:NewProperties}(c), we showed the provability of $\Box \neg A \to \neg \Diamond A$ and $\BOX \neg A \to \neg \DIAMOND A$.
This implies that also $\Diamond A \to \neg \Box \neg A$ and  $\DIAMOND A \to \neg \BOX \neg A$ are provable. Here we show that
De~Morgan axioms $\Diamond A \leftrightarrow \neg \Box \neg A$ and  $\Box A \leftrightarrow \neg \Diamond \neg A$ discussed in
Introduction are not provable in {\sf Int2GC+FS}.

Let us consider a linear Heyting algebra (that is, a G{\"o}del algebra)
\[ \mathbb{H} = \left \{ \frac{1}{n+1} \, \middle| \, n \in \mathbb{N} \right \} \cup \{ 0,1 \}, \]
where $\mathbb{N} = \{ 1,2,3,\ldots\}$.
Let us set $U = \mathbb{N}$ and define a fuzzy set $\varphi \colon U \to H$ by setting
$\varphi(n) = \frac{1}{n+1}$ for all $n \in \mathbb{N}$. We also define a fuzzy relation $R$ on $U$ simply by setting $R(m,n) = 1$ 
for all $m,n \in \mathbb{N}$.

For all $n \in \mathbb{N}$,
\[ \Diamond \varphi(n) = \bigvee_{m \in \mathbb{N}} \{ R(n,m) \wedge \varphi(m) \}
= \bigvee_{m \in \mathbb{N}} \Big \{ \frac{1}{m+1}  \Big \} = \frac{1}{2}, \] 
but 
\[ \Box \neg \varphi(n) =  \bigwedge_{m \in \mathbb{N}} \{ R(n,m) \to \neg \varphi(m) \} = \bigwedge_{m \in \mathbb{N}}  \{ 1 \to 0 \} = 0,\]
which means $\neg \Box \neg \varphi(n) = 1$ for all $n \in \mathbb{N}$. Thus, $\Diamond \varphi(n) \neq \neg \Box \neg \varphi(n)$ for all $n \in \mathbb{N}$. 
Similarly, we can show $\Box \varphi \neq \neg \Diamond \neg \varphi$. Indeed, for $n \in  \mathbb{N}$,
\[
 \Box \varphi(n) = \bigwedge_{m \in \mathbb{N}} \Big \{ R(n,m) \to  \frac{1}{m+1} \Big \} \\
		   = \bigwedge_{m \in \mathbb{N}} \Big \{ \frac{1}{m+1} \Big \} = 0.
\]
On the other hand,
\[
 \Diamond \neg \varphi(n) = \bigvee_{m \in \mathbb{N}} \{ R(n,m) \wedge \neg \varphi(m) \} 
   = \bigvee_{m \in \mathbb{N}} \{ 1 \wedge 0 \} = 0.
\]
So, $\neg \Diamond \neg \varphi(n) = 1$ for all $n \in \mathbb{N}$.

Note that by the way the relation $R$ is defined, these examples show that $\DIAMOND A \leftrightarrow \neg \BOX \neg A$ 
and  $\BOX A \leftrightarrow \neg \DIAMOND \neg A$ cannot be proved in {\sf Int2GC+FS}. This example also shows that
De~Morgan axioms~\eqref{Eq:DeMorganDual1} and \eqref{Eq:DeMorganDual2} cannot be proved in  {\sf G2GC+FS} neihter,
where {\sf G} stands for G{\"o}del--Dummett logic.
\end{example}
  
\begin{example} \label{Exa:Independent}
As we saw in Example~\ref{Exa:Motivation}(b), the first condition of \eqref{Eq:Dunn} does not imply the second one. Here we show that the
converse implication is not true neither. Thus, in Heyting algebras with two Galois connections, the
conditions of \eqref{Eq:Dunn} are independent.

Let $H = \{0, a, b, c, 1\}$ with $0 < c < a, b < 1$, but $a$ and $b$ are not comparable. We define $\Diamond, \Box \colon H \to H$ by
\begin{center}
$\Diamond 0 = \Diamond b = \Diamond c = 0
$ and $\Diamond a = \Diamond 1 = a
$; \\
$\Box 0 = \Box b = \Box c = b$
 and $\Box a = \Box 1 = 1$.
\end{center}
Then, we set $\DIAMOND = \Diamond$ and $\BOX = \Box$.
It is easy to verify that the pairs $(\Diamond, \BOX)$ and $(\DIAMOND,\Box)$ are Galois connections.

In addition, the second condition of \eqref{Eq:Dunn} holds for both $(\Diamond, \Box)$ and $(\DIAMOND, \BOX)$, that is,
\begin{center}
$\Box(x \vee y) \leq \Box x \vee \Diamond y
$ and $\BOX(x \vee y) \leq \BOX x \vee \DIAMOND y$.
\end{center}
But now the first conditions of \eqref{Eq:Dunn} does not hold, because, for instance, 
$\Diamond a \wedge \Box b = a \wedge b = c$, but $\Diamond(a \wedge b) = \Diamond c = 0$.
\end{example}

We end this section by noting that the above examples show that calculating using Heyting algebras with 
operators is much easier than calculating with categories, and calculating with algebras can be
easily used in showing some of the non-theorems, for instance.

\section{Representation theorem of H2GC+FS-algebras and relational completeness}
\label{Sec:RelationalSematics}

We introduced in \cite{DzJaKo10} relational frames and models for {\sf IntGC}. An {\sf IntGC}-frame
$(X,\leq,R)$ is a relational structure such that $(X,\leq)$ is a quasiordered set and $R$ is a relation on $X$ such that
\begin{equation} \label{Eq:frame1} 
({\ge} \circ R \circ {\ge}) \subseteq R .
\end{equation}
An {\sf IntGC}-model $(X,\leq,R,\models)$ is such that $(X,\leq,R)$ is an {\sf IntGC}-frame and
the satisfiability relation $\models$ is a binary relation from $X$ to the set of propositional
variables $\mathit{Var}$ such that $x \models p$ and $x \le y$ imply $y \models p$,
For any $x \in X$ and $A \in \Phi$, we define the satisfiability 
relation inductively by the following way:
\begin{align*}
x \models A \wedge B &\iff x \models A \mbox{ and } x \models B \\
x \models A \vee B &\iff x \models A \mbox{ or } x \models B \\
x \models A \to B &\iff \mbox{ for all } y \geq x, \  y \models A \mbox{ implies }  y \models B \\
x \models \neg A &\iff \mbox{ for no }y \geq x \mbox{ does }  y  \models A \\
x \models \Diamond A &\iff \mbox{ exists } y \mbox{ such that } x \, R \, y \mbox{ and }  y \models  A\\
x \models \BOX A &\iff \mbox{ for all } y, y \, R \, x \mbox{ implies }  y \models  A 
\end{align*}
We proved in \cite{DzJaKo10} that {\sf IntGC} is relationally complete, meaning that an {\sf IntGC}-formula $A$ is provable
if and only if $A$ is valid in all {\sf IntGC}-models, that is, for any  {\sf IntGC}-model
$(X,\leq,R,\models)$ and for all $x \in X$, we have $x \models A$.

It is clear that since {\sf Int2GC} is a fusion of two independent {\sf IntGC}s, relational frames
for {\sf Int2GC} are of the form $(X,\leq,R_1,R_2)$ such that $(X,\leq,R_1)$ and $(X,\leq,R_2)$
are {\sf IntGC}-frames. Relational completeness  can then be proved by defining complex frames for
{\sf Int2GC} and by applying the results of complex frames of {\sf IntGC} (cf.~\cite{Kurucz2007}).

Fischer Servi described relational frames and models for {\sf IK} in \cite{FishServ84}. We will apply
the same frames for {\sf Int2GC+FS}.
An {\sf IK}-frame is a triple $(X,\leq,R)$, where $(X,\leq)$ is a quasiordered set and $R$ is a relation on $X$ such that
\begin{equation}\label{Eq:frame2} 
(R \circ {\leq}) \subseteq ({\leq} \circ R) \text{ \quad and \quad } ({\geq} \circ R) \subseteq (R \circ {\geq}).
\end{equation}
Note that in \cite{OrlRew07,OrlRadRew13} these frames are called {\sf HK1}-frames. Because the second condition
of \eqref{Eq:frame2} is equivalent to $(R^{-1} \circ {\leq}) \subseteq ({\leq} \circ R^{-1})$, we have that
$(X,\leq,R)$ is an {\sf IK}-frame if and only if $(X,\leq,R^{-1})$ is an {\sf IK}-frame. The following
relationship holds between {\sf IntGC}-frames and  {\sf IK}-frames.

\begin{lemma}
A frame $(X,\leq,R)$ is an {\sf IK}-frame if and only if $(X,\leq,R \circ {\geq})$ and 
$(X,\leq, R^{-1} \circ {\geq})$ are {\sf IntGC}-frames.
\end{lemma}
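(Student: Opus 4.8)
The plan is to rewrite the defining inclusion of an {\sf IntGC}-frame for the two relations $R\circ{\geq}$ and $R^{-1}\circ{\geq}$ as inclusions between compositions of $R$, $R^{-1}$ and the quasiorder, and then to match these with the two inclusions of \eqref{Eq:frame2}. Since $(X,\leq)$ is required to be a quasiordered set in all three notions, that part of the equivalence is immediate, and the remaining argument is purely relation-algebraic.

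First I would record the facts used repeatedly: as $\leq$, and hence $\geq$, is reflexive and transitive, one has ${\geq}\circ{\geq}={\geq}$, and $Q\subseteq{\geq}\circ Q$ and $Q\subseteq Q\circ{\geq}$ for every relation $Q$ on $X$. From these I would establish the core observation: for any relation $Q$ on $X$, the inclusion ${\geq}\circ Q\circ{\geq}\subseteq Q\circ{\geq}$ is equivalent to ${\geq}\circ Q\subseteq Q\circ{\geq}$. The forward direction holds because ${\geq}\circ Q\subseteq{\geq}\circ Q\circ{\geq}$ (using $Q\subseteq Q\circ{\geq}$); the converse follows by composing ${\geq}\circ Q\subseteq Q\circ{\geq}$ on the right with $\geq$ and using ${\geq}\circ{\geq}={\geq}$.

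Applying this with $Q:=R$, and noting that ${\geq}\circ(R\circ{\geq})\circ{\geq}={\geq}\circ R\circ{\geq}$, I obtain that $(X,\leq,R\circ{\geq})$ is an {\sf IntGC}-frame if and only if ${\geq}\circ R\subseteq R\circ{\geq}$, i.e.\ exactly the second inclusion of \eqref{Eq:frame2}. Applying the observation with $Q:=R^{-1}$ gives that $(X,\leq,R^{-1}\circ{\geq})$ is an {\sf IntGC}-frame if and only if ${\geq}\circ R^{-1}\subseteq R^{-1}\circ{\geq}$; taking converses of both sides, and using $({\geq}\circ R^{-1})^{-1}=R\circ{\leq}$ and $(R^{-1}\circ{\geq})^{-1}={\leq}\circ R$, this becomes $R\circ{\leq}\subseteq{\leq}\circ R$, the first inclusion of \eqref{Eq:frame2}. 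Combining the two equivalences, both $(X,\leq,R\circ{\geq})$ and $(X,\leq,R^{-1}\circ{\geq})$ are {\sf IntGC}-frames precisely when both conditions of \eqref{Eq:frame2} hold, that is, precisely when $(X,\leq,R)$ is an {\sf IK}-frame.

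There is no real obstacle here; the only points needing care are that the core equivalence genuinely uses reflexivity of $\geq$ in one direction and transitivity in the other, and the converse-relation bookkeeping that identifies the condition on $R^{-1}\circ{\geq}$ with the first inclusion of \eqref{Eq:frame2} — essentially the observation already noted just before the lemma, namely that passing from $R$ to $R^{-1}$ interchanges the two inclusions of \eqref{Eq:frame2}.
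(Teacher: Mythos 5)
Your proof is correct and takes essentially the same route as the paper's: a relation-algebraic computation using reflexivity and transitivity of $\geq$ to insert or absorb composition factors, together with the converse bookkeeping $({\geq}\circ R^{-1})^{-1}=R\circ{\leq}$, $(R^{-1}\circ{\geq})^{-1}={\leq}\circ R$ that interchanges the two inclusions of \eqref{Eq:frame2} when passing from $R$ to $R^{-1}$. The only difference is organizational: by routing everything through the observation ${\geq}\circ Q\circ{\geq}\subseteq Q\circ{\geq} \iff {\geq}\circ Q\subseteq Q\circ{\geq}$ you get the slightly sharper componentwise correspondence (each {\sf IntGC}-frame condition is equivalent to one inclusion of \eqref{Eq:frame2}), whereas the paper verifies the two implications of the lemma in bulk.
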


\begin{proof}
Let $(X,\leq,R)$ be an {\sf IK}-frame. Then, 
\[
 ({\geq} \circ (R \circ {\geq}) \circ {\geq}) \subseteq  ({\geq} \circ R \circ {\geq})  \subseteq (R  \circ {\geq} \circ {\geq})
 \subseteq (R \circ {\geq}),
\]
Thus, \eqref{Eq:frame1} holds for $(R \circ {\geq})$. Similarly, since  $(X,\leq,R^{-1})$ is an {\sf IK}-frame,
\begin{align*}
({\geq} \circ (R^{-1} \circ {\geq}) \circ {\geq}) &\subseteq ({\geq} \circ (R^{-1} \circ {\geq})) =  (({\geq} \circ R^{-1}) \circ {\geq}) \\
& \subseteq ((R^{-1} \circ {\geq} ) \circ {\geq}) \subseteq (R^{-1} \circ {\geq} ).
\end{align*}
Hence, \eqref{Eq:frame1} holds for $(R^{-1} \circ {\geq})$ also.

Conversely, suppose  $(X,\leq,R \circ {\geq})$ and 
$(X,\leq, (R^{-1} \circ {\geq}))$ are {\sf IntGC}-frames. Then,
\begin{align*}
(R \circ {\leq}) = ({\geq} \circ R^{-1})^{-1} \subseteq ( {\geq} \circ (R^{-1} \circ {\geq}) \circ {\geq})^{-1} 
\subseteq  (R^{-1} \circ {\geq} )^{-1} = ({\leq}  \circ R).
\end{align*}
In addition,
\[
 ({\geq} \circ R) \subseteq (({\geq} \circ R) \circ {\geq} \circ {\geq}) = ({\geq} \circ (R \circ {\geq}) \circ {\geq}) 
\subseteq (R \circ {\geq}).
\]
Therefore, $R$ satisfies \eqref{Eq:frame2}.
\end{proof}

In {\sf IK}-models $(X,\leq,R,\models)$, the satisfiability relation $\models$ for $\vee$, $\wedge$, $\to$, 
and $\neg$ are defined as earlier, and satisfiability for  $\Diamond A$, and $\Box A$ are defined by:
\begin{align*}
x \models \Diamond A   &\iff \mbox{ exists } y \mbox{ such that } x \, (R \circ {\geq}) \, y \text{ and } y \models  A \\
x \models \Box A &\iff \mbox{ for all } y, x \, ({\leq} \circ R) \, y \text{ implies } y \models  A  
\end{align*}
For the remaining  $\DIAMOND A$ and $\BOX A$, we define the satisfiability relation by:
\begin{align*}
x \models \DIAMOND A   &\iff \mbox{ exists } y \mbox{ such that } y \, ({\leq} \circ R) \, x \text{ and } y \models  A \\
x \models \BOX A &\iff \mbox{ for all } y, y \, (R \circ {\geq}) \, x \text{ implies } y \models  A 
\end{align*}
In the sequel, we call these models  {\sf IK$^2$}-models. The idea is that the models are based on
{\sf IK}-frames, but satisfiability is defined twice: both for the pairs ($\Diamond, \Box$) and
($\DIAMOND, \BOX$).  A formula $A$ is \emph{relationally valid} if it is valid in every  {\sf IK$^2$}-model, that is,
$x \models A$ holds for all elements $x$ in the model. Note  that:
\begin{align*}
x \models \Diamond A   &\iff \mbox{ exists } y \mbox{ such that } x \, R \, y \text{ and } y \models  A \\
x \models \DIAMOND A   &\iff \mbox{ exists } y \mbox{ such that } y \, R \, x \text{ and } y \models  A 
\end{align*}
We may now give the following soundness result.

\begin{proposition} Every {\sf Int2GC+FS}-provable formula is relationally valid.
\end{proposition}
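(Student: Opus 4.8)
The plan is to prove soundness by induction on the length of derivations in \textsf{Int2GC+FS}. The base case has two parts: first, the axioms of intuitionistic propositional logic are valid in every \textsf{IK}$^2$-model, since the intuitionistic connectives are interpreted exactly as in ordinary Kripke semantics for \textsf{Int} over the quasiorder $(X,\leq)$, and persistence (heredity) of satisfiability along $\leq$ is preserved by the semantic clauses for $\Diamond,\Box,\DIAMOND,\BOX$ precisely because the frame conditions \eqref{Eq:frame2} are designed for this. Second, I must check the Fischer Servi axioms \textrm{(FS1)} and \textrm{(FS2)} (by Proposition~\ref{Prop:FSEquiv} and the corollary, these together with \textsf{Int2GC} generate the logic). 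For the inductive step, I need to verify that modus ponens preserves validity (immediate from the $\to$-clause applied at an arbitrary point, using that every point sees itself) and that the four Galois connection rules $(\mathrm{GC}\,\BOX\Diamond)$, $(\mathrm{GC}\,\Diamond\BOX)$, $(\mathrm{GC}\,\Box\DIAMOND)$, $(\mathrm{GC}\,\DIAMOND\Box)$ preserve validity.

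First I would record the two basic semantic facts that drive everything. (1) \emph{Persistence}: for every formula $A$ and every \textsf{IK}$^2$-model, $x\models A$ and $x\leq y$ imply $y\models A$. This is proved by formula induction; the cases for $\Diamond,\BOX$ are the ones handled already for \textsf{IntGC}-models using \eqref{Eq:frame1} applied to $R\circ{\geq}$ (and the lemma above relating \textsf{IK}-frames to \textsf{IntGC}-frames guarantees $R\circ{\geq}$ and $R^{-1}\circ{\geq}$ are \textsf{IntGC}-relations), and symmetrically for $\DIAMOND,\Box$ using $R^{-1}\circ{\geq}$. (2) The unfolded clauses $x\models\Diamond A\iff(\exists y)(xRy\ \&\ y\models A)$ and $x\models\DIAMOND A\iff(\exists y)(yRx\ \&\ y\models A)$ noted just before the proposition, together with their box-duals $x\models\Box A\iff(\forall y)(xRy\Rightarrow y\models A)$ and $x\models\BOX A\iff(\forall y)(yRx\Rightarrow y\models A)$, which follow from persistence by absorbing the $\leq$ and $\geq$ factors. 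With (2) in hand, the Galois connection rules are easy: if $A\to\BOX B$ is valid, then at any $x$ with $x\models\Diamond A$ pick $y$ with $xRy$ and $y\models A$; validity of $A\to\BOX B$ gives $y\models\BOX B$, i.e. $z\models B$ for all $z$ with $zRy$; taking $z=x$ (since $xRy$) yields $x\models B$. Thus $\Diamond A\to B$ is valid. The converse rule $(\mathrm{GC}\,\Diamond\BOX)$ and the two $\Box$–$\DIAMOND$ rules are symmetric, using $R$ versus $R^{-1}$. For the Fischer Servi axioms I would verify \textrm{(FS1)} $\Diamond(A\to B)\to(\Box A\to\Diamond B)$ directly at a point $x$: assume $x\models\Diamond(A\to B)$ and $x\models\Box A$; the first gives some $y$ with $x\,(R\circ{\geq})\,y$ and $y\models A\to B$; the frame condition $(R\circ{\leq})\subseteq({\leq}\circ R)$ (equivalently the interplay of \eqref{Eq:frame2}) lets me relocate so that $y$ is $R$-accessible from a point $\geq x$ where $\Box A$ still holds by persistence, forcing $y\models A$, hence $y\models B$, hence $x\models\Diamond B$. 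Axiom \textrm{(FS2)} is handled the same way using the second clause of \eqref{Eq:frame2}.

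I expect the main obstacle to be the bookkeeping in the Fischer Servi cases, where the semantic clauses genuinely involve the composite relations $R\circ{\geq}$ and ${\leq}\circ R$ rather than $R$ alone, so the argument must thread a point through both an $R$-step and an order-step in the correct direction; this is exactly where the frame conditions \eqref{Eq:frame2} are used, and getting the quantifier alternation and the direction of $\leq$ versus $\geq$ right is the delicate part. By contrast, once persistence and the simplified box/diamond clauses are established, modus ponens, the intuitionistic axiom schemes, and the four Galois rules are routine. I would therefore organize the proof as: (i) prove persistence; (ii) derive the simplified satisfiability clauses for all four modal operators; (iii) dispatch MP and the intuitionistic base case; (iv) verify the four GC rules; (v) verify \textrm{(FS1)} and \textrm{(FS2)}; and conclude by the inductive principle that every \textsf{Int2GC+FS}-provable formula is relationally valid.
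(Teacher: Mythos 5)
Your overall architecture (persistence first, then validity of the axioms and preservation under MP and the four GC rules) is exactly the paper's, and your persistence argument and the simplified clauses for the two diamonds are correct. The genuine error is in your step (2): on {\sf IK}-frames you cannot ``absorb the order factor'' in the box clauses. The official clauses are $x\models\Box A$ iff every $y$ with $x\,({\leq}\circ R)\,y$ satisfies $A$, and $x\models\BOX A$ iff every $y$ with $y\,(R\circ{\geq})\,x$ satisfies $A$; reducing these to plain $R$-successors (resp.\ $R$-predecessors) would require the inclusion $({\leq}\circ R)\subseteq(R\circ{\leq})$ (resp.\ $(R\circ{\geq})\subseteq({\geq}\circ R)$), which is the \emph{converse} of what \eqref{Eq:frame2} gives, and it fails in general: take $X=\{x,x',y\}$ with $x\leq x'$, $R=\{(x',y)\}$ (this is an {\sf IK}-frame) and $p$ false at $y$; then $x$ has no $R$-successors, so the plain-$R$ box of $p$ holds at $x$ vacuously, while $x\not\models\Box p$ since $x\leq x'\,R\,y$. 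Persistence helps only for the diamonds, where the order step comes \emph{after} the $R$-step; the intuitionistic box is genuinely not reducible to $R$ alone (that is precisely why the ${\leq}\circ R$ clause is used).

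This matters where you lean on the false equivalence. Your explicit check of (GC\,${\BOX}{\Diamond}$) survives, because there the box hypothesis is used only in the harmless direction ($z\,R\,y$ implies $z\,(R\circ{\geq})\,y$). But the rules you dismiss as ``symmetric'', (GC\,${\Diamond}{\BOX}$) and (GC\,${\DIAMOND}{\Box}$), require you to \emph{establish} $x\models\BOX B$ resp.\ $x\models\Box B$, i.e.\ to verify the composite-relation clause; checking plain $R$-neighbours is not enough, and the repair is one extra persistence step (e.g.\ if $\Diamond A\to B$ is valid, $x\models A$ and $y\,R\,z\geq x$, then $z\models A$ by persistence, so $y\models\Diamond A$ and hence $y\models B$, giving $x\models\BOX B$ in the official sense). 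Likewise your (FS1) step ``relocate so that $y$ is $R$-accessible from a point ${\geq}\,x$'' is not licensed: \eqref{Eq:frame2} does not yield $(R\circ{\geq})\subseteq({\leq}\circ R)$. The correct (and simpler) move is to push the witness up: from $x\,R\,z\geq y$ and $y\models A\to B$, persistence gives $z\models A\to B$, the official $\Box$-clause at $x$ (via $x\leq x\,R\,z$) gives $z\models A$, hence $z\models B$ and $x\models\Diamond B$, with no frame condition needed beyond persistence. With these repairs your proof goes through and essentially coincides with the paper's, which works with the composite relations throughout, cites Fischer Servi for (FS1) and (FS3), checks (FS2) directly using \eqref{Eq:frame2}, and obtains the GC rules from the observation that $\Diamond,\BOX$ and $\DIAMOND,\Box$ are the existential/universal operators of $R\circ{\geq}$ and ${\leq}\circ R$ and their inverses.
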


\begin{proof} We need to show that the axioms of {\sf Int2GC+FS} are valid in all {\sf IK$^2$}-models, and that the Galois 
connection rules preserve validity.

In \cite{FishServ84}, it is proved that axioms (FS1) are (FS3) are valid in all {\sf IK}-frames. As an example, 
we consider (FS2). Validity of (FS4) can be proved in a similar way. If (FS2) is not valid, 
then there exists $x \in X$ such that (i) $x \models \DIAMOND (A \to B)$, but (ii) $x \not \models \BOX A \to \DIAMOND B$.
By (i), there is $y \, R \, x$ such that (iii) $y \models A \to B$, and (ii) means that there is $z \geq x$
such that (iv) $z \models \BOX A$, but (v) $z \not \models \DIAMOND B$. We have $y \, (R \circ {\leq} ) \, z$, which
implies by \eqref{Eq:frame2} that $y \, ({\leq} \circ R) \, z$, meaning that there is $v \geq y$ such
that $v \, R \, z$. By (v), we get $v \not \models B$ and (iii) gives $v \not  \models A$. Now
$v \, (R \circ {\geq}) \, z$ implies $z \not \models \BOX A$, a contradiction to (iv).

Because the validity of $\Diamond$ and $\BOX$ are defined in terms of $R \circ {\geq}$ and its inverse, it
is clear that the pair $(\Diamond, \BOX)$ is a Galois connection on $\Phi$, that is, the rules
(GC\,${\Diamond}{\BOX})$ and (GC\,${\BOX}{\Diamond}$) preserve validity, and the same holds
for the pair ($\DIAMOND$, $\Box$).
\end{proof}

\begin{lemma} \label{Lem:Perisisent}
For all {\sf IK$^2$}-models $(X,\leq,R,\models)$ and formulas $A \in \Phi$: 
\[  x \models  A \mbox{ \ and \ } x \leq y \mbox{ \ imply \ }  y \models  A.\]
\end{lemma}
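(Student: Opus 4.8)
The plan is to prove the persistence (heredity) property by induction on the structure of the formula $A$. The base case is $A = p$ for a propositional variable, and this is built into the definition of an {\sf IK$^2$}-model, which requires that $x \models p$ and $x \leq y$ imply $y \models p$. The inductive cases for the propositional connectives $\wedge$, $\vee$, $\to$, $\neg$ are the standard ones from intuitionistic Kripke semantics: the conjunction and disjunction cases follow immediately from the induction hypothesis, while the cases for $\to$ and $\neg$ are handled using the fact that $\leq$ is transitive (indeed a quasiorder), so that if $x \leq y$ then every $z \geq y$ satisfies $z \geq x$.

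The genuinely new cases are the four modal operators $\Diamond$, $\BOX$, $\DIAMOND$, $\Box$, and here the frame conditions \eqref{Eq:frame2} for {\sf IK}-frames do the work. Consider $\Diamond A$: suppose $x \models \Diamond A$ and $x \leq y$. Then there is $z$ with $x \, (R \circ {\geq}) \, z$ and $z \models A$; unfolding, there is $w$ with $x \, R \, w$ and $w \geq z$. I would not even need to move $z$: since $x \leq y$ and $x \, R \, w$, the relation $R \circ {\leq}$ connects... actually the cleanest route is to use that $(R \circ {\geq})$ composed with $\leq$ on the left behaves well. More carefully, from $y \geq x$ and $x \, (R \circ {\geq}) \, z$ one gets $y \, ({\geq} \circ R \circ {\geq}) \, z$; since $(X,\leq, R\circ{\geq})$ is an {\sf IntGC}-frame (by the Lemma relating {\sf IK}-frames and {\sf IntGC}-frames), condition \eqref{Eq:frame1} gives ${\geq} \circ (R \circ {\geq}) \circ {\geq} \subseteq R \circ {\geq}$, hence $y \, (R\circ{\geq}) \, z$, and with $z \models A$ we conclude $y \models \Diamond A$. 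The case of $\DIAMOND A$ is symmetric, using that $(X, \leq, R^{-1}\circ{\geq})$ is an {\sf IntGC}-frame.

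For the box operators I would argue contrapositively or directly via the dual frame conditions. For $\BOX A$: suppose $x \models \BOX A$ and $x \leq y$; to show $y \models \BOX A$, take any $w$ with $w \, (R \circ {\geq}) \, y$ and show $w \models A$. From $w \, (R\circ{\geq}) \, y$ and $y \geq x$ we get $w \, (R \circ {\geq} \circ {\geq}) \, x = w \, (R\circ{\geq})\, x$ by transitivity of $\geq$, and since $x \models \BOX A$ this yields $w \models A$. The case of $\Box A$ uses the condition $(R \circ {\leq}) \subseteq ({\leq} \circ R)$ together with persistence of the witness: if $x \models \Box A$, $x \leq y$, and $y \, ({\leq}\circ R) \, w$, then $x \, ({\leq} \circ {\leq} \circ R) \, w = x\,({\leq}\circ R)\, w$, so $w \models A$.

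I do not expect a serious obstacle here; the proof is routine once the {\sf IntGC}-frame characterization of {\sf IK}-frames is in hand. The one point requiring a little care is bookkeeping the direction of the relations for $\Box$ versus $\BOX$ and for $\Diamond$ versus $\DIAMOND$ — it is easy to mix up ${\leq} \circ R$ with $R \circ {\geq}$ — so I would state the four modal cases explicitly rather than merely saying ``similarly''. The cleanest presentation reduces the modal cases to invoking \eqref{Eq:frame1} for the two derived relations $R \circ {\geq}$ and $R^{-1} \circ {\geq}$, so that the essential monotonicity is exactly the one already verified for {\sf IntGC}-models.
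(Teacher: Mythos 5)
Your proof is correct and follows essentially the same route as the paper: induction on the structure of $A$, with the {\sf IK}-frame conditions (equivalently, via the lemma, the {\sf IntGC}-frame condition for $R\circ{\geq}$ and $R^{-1}\circ{\geq}$) handling the diamond cases and transitivity of $\leq$ handling the box cases; the paper simply writes out only the genuinely new cases $\DIAMOND$ and $\BOX$, citing Fischer Servi for the remaining connectives and for $\Diamond$, $\Box$. One cosmetic remark: in your $\Box$ case you announce the condition $(R\circ{\leq})\subseteq({\leq}\circ R)$, but your own computation shows it is not needed there --- transitivity of $\leq$ alone suffices, exactly as in the $\BOX$ case.
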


\begin{proof} We need to show the persistency of $\DIAMOND$ and $\BOX$, because other connectives
are considered in \cite{FishServ84}. Suppose
$x \models \DIAMOND A$ and $x \leq y$. Then, there is $z \, R \, x$ such that
$z \models A$. Now $z \, (R \circ {\leq}) \, y$ imply $z \, ({\leq} \circ R) \, y$.
Thus, $y \models \DIAMOND A$.

Assume that $x \models \BOX A$ and $x \leq y$. If $z \, (R \circ {\geq}) \, y$, then
also  $z \, (R \circ {\geq}) \, x$ and $z \models A$. So, $y \models \BOX A$.
\end{proof}

Or{\l}owska and Rewitzky studied canonical frames of HK1-algebras in \cite{OrlRew07}. For a 
HK1-algebra  $(H,\vee,\wedge,\to,0,1, \Diamond, \Box)$, let us denote for any $A \subseteq H$,
\[ {\Box}^{-1} A = \{ x \in H \mid \Box x \in A \} \text{ \quad and \quad }
   {\Diamond}^{-1}A = \{ x \in H \mid \Diamond x \in A\}.
\]
Let $X(H)$ be the set of all prime lattice filters of $H$. A relation $R^c$ is defined on $X(H)$ by
\begin{equation} \label{Eq:ComplexR}
 F \, R^c \, G \iff {\Box}^{-1} F \subseteq G \subseteq {\Diamond}^{-1} F .
\end{equation}
Or{\l}owska and Rewitzky showed that this frame is an {\sf IK}-frame. For an H2GC+FS-algebra
$(H,\vee,\wedge,\to,0,1, \Diamond, \Box,\DIAMOND, \BOX)$, its \emph{canonical frame} is
$(X(H), \subseteq, R^c)$. So, we are using the same canonical frames as for HK1-algebras. Let us denote: 
\[ {\BOX}^{-1} A = \{ x \in H \mid \BOX x \in A \} \text{ \quad and \quad }
   {\DIAMOND}^{-1}A = \{ x \in H \mid \DIAMOND x \in A\}.
\]
\begin{lemma} \label{Lem:CanFrameConnection}
Let  $(H,\vee,\wedge,\to,0,1, \Diamond, \Box,\DIAMOND, \BOX)$ be an H2GC+FS-algebra. Then, for all
$F,G \in X(H)$,
\begin{equation}
 F \, R^c G \iff  {\BOX}^{-1} G \subseteq F \subseteq {\DIAMOND}^{-1} G .
\end{equation}
\end{lemma}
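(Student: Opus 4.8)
The plan is to reduce the claimed biconditional to two ``dual'' set-inclusion equivalences, one for each of the two Galois connections. Recall that $F\,R^c\,G$ abbreviates $\Box^{-1}F\subseteq G\subseteq\Diamond^{-1}F$, and that in any H2GC+FS-algebra we have the defining inequalities of the two Galois connections, namely $x\leq\BOX\Diamond x$ and $\Diamond\BOX x\leq x$ (this is \eqref{Eq:Connectivity}) together with their analogues $x\leq\Box\DIAMOND x$ and $\DIAMOND\Box x\leq x$ coming from the HGC-reduct $(H,\vee,\wedge,\to,0,1,\DIAMOND,\Box)$. The only property of prime filters used here is that they are upward closed; primeness itself plays no role in this particular lemma.

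First I would prove the equivalence $\Box^{-1}F\subseteq G\iff F\subseteq\DIAMOND^{-1}G$. For the forward direction, assume $\Box^{-1}F\subseteq G$ and take $x\in F$; from $x\leq\Box\DIAMOND x$ and upward closure of $F$ we get $\Box\DIAMOND x\in F$, hence $\DIAMOND x\in\Box^{-1}F\subseteq G$, i.e.\ $x\in\DIAMOND^{-1}G$. For the converse, assume $F\subseteq\DIAMOND^{-1}G$ and take $x$ with $\Box x\in F$; then $\DIAMOND\Box x\in G$, and since $\DIAMOND\Box x\leq x$ and $G$ is upward closed, $x\in G$. By the entirely symmetric argument using the other Galois connection $(\Diamond,\BOX)$ together with $\Diamond\BOX x\leq x\leq\BOX\Diamond x$, I would then prove $G\subseteq\Diamond^{-1}F\iff\BOX^{-1}G\subseteq F$ (forward: from $\BOX x\in G$ one gets $\Diamond\BOX x\in F$ and then $x\in F$; backward: from $x\in G$ one gets $\BOX\Diamond x\in G$ and then $\Diamond x\in F$).

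Conjoining the two equivalences gives
\[
F\,R^c\,G\iff\bigl(\Box^{-1}F\subseteq G\ \text{and}\ G\subseteq\Diamond^{-1}F\bigr)\iff\bigl(F\subseteq\DIAMOND^{-1}G\ \text{and}\ \BOX^{-1}G\subseteq F\bigr),
\]
which is exactly $\BOX^{-1}G\subseteq F\subseteq\DIAMOND^{-1}G$, as required. The computation is routine and there is no genuine obstacle; the only point needing care is bookkeeping—keeping track of which of the two Galois connections (and hence which pair of unit/counit inequalities) is invoked in each half, and observing that nothing beyond these inequalities and the upward closure of filters is needed.
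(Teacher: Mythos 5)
Your proof is correct and takes essentially the same route as the paper's: both arguments use only the unit/counit inequalities $x\leq\Box\DIAMOND x$, $\DIAMOND\Box x\leq x$, $x\leq\BOX\Diamond x$, $\Diamond\BOX x\leq x$ together with upward closure of the (prime) filters $F$ and $G$. The only difference is organizational—you split the claim into two inclusion-wise biconditionals and prove each directly, whereas the paper proves one implication (one half by contradiction) and declares the converse analogous.
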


\begin{proof}
Suppose $F \, R^c G$. If $x \in {\BOX}^{-1} G$, then $\BOX x \in G$. Now $\Diamond \BOX x \leq x$. Assume $x \notin F$. Then
$\Diamond \BOX x \notin F$, which  by the definition of $R^c$ gives  $\BOX x \notin G$, a contradiction.
Similarly, if $x \in F$, then $\Box \DIAMOND x \geq x$ and $\Box \DIAMOND x \in F$. This implies $\DIAMOND x \in G$
by the definition of $R^c$. 
Conversely, if ${\BOX}^{-1} G \subseteq F \subseteq \DIAMOND^{-1} G$, then $F \, R^c G$ can be proved in an analogous manner.
\end{proof}

Lemma~\ref{Lem:CanFrameConnection} means that we have two ways to define the relation $R^c$, either by using $\Diamond$ and $\Box$,
or by using $\DIAMOND$ and $\BOX$. Let $(X(H), \subseteq, R^c)$ be the canonical frame of some H2GC+FS-algebra on $H$. To obtain
the canonical model, we define the relation $\models_c$ from $X(H)$ to $\mathit{Var}$ by $F \models_c p$ if and only if $v(p) \in F$.

In the book \cite{OrlRadRew13}, it is shown that in the canonical frame $(X(H),\subseteq,R^c)$ of an HK1-algebra
$(H,\vee,\wedge,\to,0,1, \Diamond, \Box)$, for all $F,G \in X(H)$,
\begin{enumerate}[\rm (i)]
 \item $F \, ({\subseteq} \circ R^c) \, G \iff \Diamond^{-1}F \subseteq G$;
 \item $F \, (R^c \circ {\supseteq}) \, G \iff G \subseteq \Box^{-1} F$.
\end{enumerate}
We extend this result to H2GC+FS-algebras.

\begin{lemma} \label{Lem:CanFrameConnection2}
Let $(H,\vee,\wedge,\to,0,1, \Diamond, \Box,\DIAMOND, \BOX)$ be an H2GC+FS-algebra.
Then in the canonical frame $(X(H),\subseteq,R^c)$, for all $F,G \in X(H)$,
\begin{enumerate}[\rm (a)]
 \item $F \, ({\subseteq} \circ R^c) \, G \iff F \subseteq \DIAMOND^{-1}G$;
 \item $F \, (R^c \circ {\supseteq}) \, G \iff \BOX^{-1} G \subseteq F$.
\end{enumerate}
\end{lemma}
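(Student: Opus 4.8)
The plan is to deduce both equivalences from the already-established characterisations (i) and (ii) of ${\subseteq}\circ R^c$ and $R^c\circ{\supseteq}$, by exploiting the fact that the pair $(\DIAMOND,\BOX)$ endows $H$ with a \emph{second} HK1-algebra structure whose canonical relation is, up to converse, again $R^c$.

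Concretely, I would first note that by Corollary~\ref{Cor:CharacteringAlgebra}(a) the reduct $(H,\vee,\wedge,\to,0,1,\DIAMOND,\BOX)$ is an HK1-algebra; write $S^c$ for its canonical relation on $X(H)$, so that $F\,S^c\,G$ iff $\BOX^{-1}F\subseteq G\subseteq\DIAMOND^{-1}F$. Comparing this with the alternative description of $R^c$ furnished by Lemma~\ref{Lem:CanFrameConnection}, namely $F\,R^c\,G$ iff $\BOX^{-1}G\subseteq F\subseteq\DIAMOND^{-1}G$, one sees at once that $F\,R^c\,G$ iff $G\,S^c\,F$, i.e.\ $S^c=(R^c)^{-1}$. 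Now apply the $(\DIAMOND,\BOX)$-analogues of (i) and (ii) in the canonical frame $(X(H),\subseteq,S^c)$ of that second HK1-algebra; this expresses ${\subseteq}\circ S^c$ and $S^c\circ{\supseteq}$ through $\DIAMOND^{-1}$ and $\BOX^{-1}$. Converting by means of $S^c=(R^c)^{-1}$ and the relation-algebra identities $(P\circ Q)^{-1}=Q^{-1}\circ P^{-1}$ and ${\subseteq}^{-1}={\supseteq}$ — which give $({\subseteq}\circ S^c)^{-1}=R^c\circ{\supseteq}$ and $(S^c\circ{\supseteq})^{-1}={\subseteq}\circ R^c$ — and finally relabelling $F$ and $G$, one reads off exactly (a) and (b).

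An alternative is to prove (a) and (b) directly, repeating the argument for (i) and (ii) but reading $R^c$ through Lemma~\ref{Lem:CanFrameConnection}. For (b), $F\,(R^c\circ{\supseteq})\,G$ unwinds to the existence of a prime filter $F'$ with $G\subseteq F'$ and $\BOX^{-1}F'\subseteq F\subseteq\DIAMOND^{-1}F'$; the forward implication is immediate since $G\subseteq F'$ gives $\BOX^{-1}G\subseteq\BOX^{-1}F'\subseteq F$, while for the converse one starts from $\BOX^{-1}G\subseteq F$ and must build such an $F'$ by the prime filter extension theorem applied to the filter generated by $G\cup\{\DIAMOND x\mid x\in F\}$ and the ideal generated by $\{\BOX y\mid y\notin F\}$. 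I expect the separation step — showing that this filter and this ideal are disjoint — to be the only real obstacle: it is here that one must bring in the Brouwerian/Galois inequalities $\Diamond\BOX x\leq x$ and $x\leq\Box\DIAMOND x$ together with the Dunn condition (D1) (available through Proposition~\ref{Prop:DunnConditions}) and the primeness of $F$ and $G$. Everything else is routine manipulation of relation composition and of the definition of the canonical model.
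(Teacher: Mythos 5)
Your primary route is sound in substance and genuinely different from the paper's. The paper proves (a) and (b) from scratch: for (a) it separates the filter generated by $F\cup\BOX^{-1}G$ from the ideal $-\DIAMOND^{-1}G$ using (FS3), and for (b) the filter generated by $G\cup\DIAMOND F$ from the ideal generated by $\BOX(-F)$ using (FS4), applying the Prime Filter Theorem in each case. You instead note, via Corollary~\ref{Cor:CharacteringAlgebra}(a) and Lemma~\ref{Lem:CanFrameConnection}, that the $(\DIAMOND,\BOX)$-reduct is an HK1-algebra whose canonical relation $S^c$ equals $(R^c)^{-1}$, and transfer the HK1 characterisations across the converse; the relational bookkeeping $({\subseteq}\circ S^c)^{-1}=R^c\circ{\supseteq}$ and $(S^c\circ{\supseteq})^{-1}={\subseteq}\circ R^c$ is correct. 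What this buys is that all the prime-filter work is inherited from the HK1 case; the price is complete dependence on the exact form of the cited facts, and that is where you cannot wave your hands. As displayed in the paper, (i) and (ii) read $\Diamond^{-1}F\subseteq G$ and $G\subseteq\Box^{-1}F$, and in that form they cannot be right (they already fail in the complex algebra of a Kripke frame in which some point has two successors); the correct Or{\l}owska--Rewitzky characterisations are $F\,({\subseteq}\circ R^c)\,G\iff\Box^{-1}F\subseteq G$ and $F\,(R^c\circ{\supseteq})\,G\iff G\subseteq\Diamond^{-1}F$. Feeding these corrected forms into your symmetry argument does read off exactly (a) and (b); feeding in the displayed forms verbatim yields $F\subseteq\BOX^{-1}G$ and $\DIAMOND^{-1}G\subseteq F$ instead, which are not the statements of the lemma (and are false). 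So the conversion must be carried out explicitly, not merely asserted.

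Your fallback direct argument is the paper's proof in outline, but the step you defer as ``the only real obstacle'' is precisely where the proof lives. For (b), the separation of the filter generated by $G\cup\DIAMOND F$ from the ideal generated by $\BOX(-F)$ goes as follows: from $a\wedge\DIAMOND b_1\wedge\cdots\wedge\DIAMOND b_m\leq d\leq\BOX c_1\vee\cdots\vee\BOX c_n\leq\BOX c$, with $b=b_1\wedge\cdots\wedge b_m\in F$ and, by primeness of $F$, $c=c_1\vee\cdots\vee c_n\notin F$, one gets $a\leq(\DIAMOND b\to\BOX c)\leq\BOX(b\to c)$ by (FS4); hence $\BOX(b\to c)\in G$, so $b\to c\in F$ by $\BOX^{-1}G\subseteq F$, so $c\in F$, a contradiction. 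Your guess that (D1) is the needed ingredient is defensible, since (D1) is equivalent to (FS4) by Proposition~\ref{Prop:DunnConditions}(a), but the Brouwerian inequalities alone do not carry the step, and you give no argument for it; moreover the analogous separation needed for (a) (the filter generated by $F\cup\BOX^{-1}G$ against the ideal $-\DIAMOND^{-1}G$, settled with (FS3)) is not addressed at all. As written, this half of the proposal is a plan rather than a proof, so the burden rests on making the first route's transfer precise.
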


\begin{proof}
(a) Suppose that  $F \, ({\subseteq} \circ R^c) \, G$. Then, there is $J \in X(H)$ 
such that $F \subseteq J$ and $J \, R^c \, G$, that is, $\BOX^{-1}G \subseteq J \subseteq \DIAMOND^{-1}G$.
Then, $F \subseteq \DIAMOND^{-1}G$. For the other direction, assume that  $F \subseteq \DIAMOND^{-1}G$.
Consider the filter $K$ generated by $F \cup \BOX^{-1}G$. Suppose $K \cap - \DIAMOND^{-1}G \neq \emptyset$.
Then, there are $a \in - \DIAMOND^{-1}G$, $b \in F$ and $c \in \BOX^{-1}G$
such that $b \wedge c \leq a$. Note that $\BOX^{-1}G$ is a filter, so it is closed under finite meets. Hence,
$b \leq c \to a$, and $\DIAMOND b \leq \DIAMOND (c \to a) \leq (\BOX c \to \DIAMOND a)$ by
(FS3). Because $b \in F$ and $F \subseteq \DIAMOND^{-1}G$, we have $\DIAMOND b \in G$
and so $\BOX c \to \DIAMOND a \in G$. Now $\BOX c \in G$ implies $\DIAMOND a \in G$,
a contradiction. Thus,  $K \cap - \DIAMOND^{-1}G = \emptyset$. It is easy to see that
$- \DIAMOND^{-1}G$ is an ideal. By the Prime Filter Theorem of distributive lattices, there is $J \in X(H)$
such that $K \subseteq J$ and $J \cap - \DIAMOND^{-1}G = \emptyset$, that is,
$J \subseteq  \DIAMOND^{-1}G$. Now $F \subseteq K \subseteq J$. Also
$\BOX^{-1}G \subseteq K$ by the definition of $K$. So, $\BOX^{-1}G \subseteq J \subseteq \DIAMOND^{-1}G$,
that is, $J \, R^c \, G$ by Lemma~\ref{Lem:CanFrameConnection}. Thus, $F \, ({\subseteq} \circ R^c) \, G$.

(b) Assume that $F \, (R^c \circ {\supseteq}) \, G$. Then, there exists $J \in X(H)$ such that
$F \, R^c \, J$ and $G \subseteq J$. Hence, $\BOX ^{-1}J \subseteq F$ and $\BOX^{-1}G \subseteq \BOX^{-1}J$
imply $\BOX^{-1}G \subseteq F$. On the other hand, assume  $\BOX^{-1}G \subseteq F$. Let $K$
be the filter generated by $G \cup \DIAMOND F$, where $\DIAMOND F = \{ \DIAMOND x \mid x \in F\}$.
Since $F$ is a prime filter, its complement $-F$ is a prime ideal. So, $-F \neq \emptyset$
and $\BOX(-F) = \{ \BOX x \mid x \notin F\} \neq \emptyset$. Let $I$ be an ideal generated by $\BOX(-F)$. Assume for
contradiction that $K \cap I \neq \emptyset$. Then, there exist $a \in G$,
$b_1, \cdots , b_m \in F$, and $d \in I$ such that $a \wedge \DIAMOND b_1 \wedge \cdots \wedge \DIAMOND b_m \leq d$. 
Take $b = b_1 \wedge \cdots \wedge b_m \in F$. We note that 
$\DIAMOND b = \DIAMOND (b_1 \wedge \cdots \wedge b_m) \le \DIAMOND b_1 \wedge \cdots \wedge \DIAMOND b_m$. 
Since $d \in I$, there are $c_1,c_2,\ldots,c_n \notin F$ such that $d \leq \BOX c_1 \vee \BOX c_2 \vee \cdots \vee \BOX c_n
\leq \BOX (c_1 \vee c_2 \vee \cdots \vee c_n)$. Because $F$ is a prime filter,
$c = c_1 \vee c_2 \vee \cdots \vee c_n \notin F$. Since $a \wedge \DIAMOND b \leq d \leq \BOX c$,
we have $a \leq \DIAMOND b \to \BOX c \leq \BOX(b \to c)$ by (FS4). Now $a \in G$ implies $\BOX(b \to c) \in G$.
Because $\BOX^{-1}G \subseteq F$, we have $b \to c \in F$. But now $b \in F$ implies $c \in F$,
a contradiction. Hence, $K \cap I = \emptyset$. By the Prime Filter Theorem of distributive lattices, 
there is $J \in X(H)$ such that $K \subseteq J$ and $J \cap I = \emptyset$. By the definition of $K$,
we have $G, \DIAMOND F \subseteq K\subseteq J$. In addition, $J \subseteq -\BOX(-F)$.
Therefore, $\BOX^{-1}J \subseteq \BOX^{-1}(-\BOX(-F)) = - \BOX^{-1}(\BOX(-F))
\subseteq F$. Because $\DIAMOND F \subseteq J$, we obtain $F \subseteq \DIAMOND^{-1}J$.
Thus, $\BOX^{-1}J \subseteq F \subseteq \DIAMOND^{-1}J$ and $F \, R^c \, J$.
Since $G \subseteq J$, we have  $F \, (R^c \circ {\supseteq}) \, G$.
\end{proof}

\medskip

For an {\sf IK}-frame $(X,\leq,R)$, let $\mathcal{T}_\leq$ be the set of $\leq$-closed subsets of $X$, that is,
\begin{equation}
 \mathcal{T}_\leq = \{ A \subseteq X \mid (\forall x,y \in X) \, x \in A \ \& \ x \leq y \Rightarrow y \in A \}.
\end{equation}
Then, $\mathcal{T}_\leq$ is an \emph{Alexandrov topology}, that is, it is a topology closed also under arbitrary intersections.
Another common name used for an Alexandrov topology is a \emph{complete ring of sets}. Let us denote by $\mathcal{I}_\leq \colon \wp(X) \to \wp(X)$ 
the interior operator of the topology $\mathcal{T}_\leq$, that is, for all $A \subseteq X$,
\[
 \mathcal{I}_\leq (A) = \bigcup \{ B \in \mathcal{T}_\leq \mid B \subseteq A \}.
\]
This means that $\mathcal{T}_\leq = \{ \mathcal{I}_\leq (A) \mid A \subseteq X\}$. The lattice $(\mathcal{T}_\leq,\subseteq)$
forms a Heyting algebra such that for all $A,B \in  \mathcal{T}_\leq$,
\[ 
A \to^c B =  \mathcal{I}_\leq (- A \cup B). 
\]
Let us define for a relational frame $(X,\leq,R)$ the following four operators $\wp(X) \to \wp(X)$:
\begin{align*}
 {\Box}^c A &= \{ x \in X \mid x \, ({\leq} \circ R) \, y \Rightarrow y \in A \} \\
 {\Diamond} ^c A &= \{ x \in X \mid (\exists y) \, x \, R \, y \ \& \ y \in A \} \\
 {\BOX}^c A &= \{ x \in X \mid y \, (R \circ {\geq}) \, x \Rightarrow y \in A \} \\
 {\DIAMOND} ^c A &= \{ x \in X \mid (\exists y) \, y \, R \, x \ \& \ y \in A \}.
\end{align*}
Or{\l}owska and Rewitzky \cite{OrlRew07} proved that
\[
 (\mathcal{T}_\leq,\cup,\cap,\to^c,\emptyset,X,{\Diamond}^c,{\Box}^c)
\]
is an HK1-algebra. It is clear that since also $(X,\leq,R^{-1})$ is an {\sf IK}-frame,
and $\DIAMOND$ is defined in terms of the inverse $R^{-1}$ of $R$ and $\BOX$ is defined
in terms of the inverse of $(R \circ {\geq})$ and $(R \circ {\geq})^{-1} = ({\leq} \circ R^{-1})$, the algebra
\[
 (\mathcal{T}_\leq,\cup,\cap,\to^c,\emptyset,X,{\DIAMOND}^c,{\BOX}^c)
\]
is an HK1-algebra. Obviously, the pairs $(\Diamond,{\BOX})$
and $(\DIAMOND, {\Box})$ are Galois connections on $(\mathcal{T}_\leq,\subseteq)$. Note that:
\begin{align*}
 {\Diamond} ^c A &= \{ x \in X \mid (\exists y) \, x \,  (R \circ {\geq}) \, y \ \& \ y \in A \} \\
 {\DIAMOND} ^c A &= \{ x \in X \mid (\exists y) \, y \,  ({\leq} \circ R) \, x \ \& \ y \in A \}.
\end{align*}
This then implies that 
\[
 C(X) = (\mathcal{T}_\leq,\cup,\cap,\to^c,\emptyset,X,{\Diamond}^c, {\Box}^c, {\DIAMOND}^c,{\BOX}^c)
\]
is an H2GC+FS-algebra, called the \emph{complex H2GC+FS-algebra} of the {\sf IK}-frame $(X,\leq,R)$.

Let  $(H,\vee,\wedge,\to,0,1, \Diamond, \Box,\DIAMOND, \BOX)$ be an H2GC+FS-algebra. We define a 
mapping $h \colon H \to C(X(H))$ from $H$ to the complex algebra of its canonical frame by
\[
 h(x) = \{ F \in X(H) \mid x \in F \}.
\]
It is proved in \cite{OrlRadRew13} that 
\begin{align*}
 h(x \vee y) &= h(x) \cup h(y) & h(x \wedge y) &=  h(x) \cap h(y) \\
 h(x \to y ) &= h(x) \to ^c h(y) & \\
 h(0) &= \emptyset & h(1) &= X(H) \\
 h(\Diamond x ) &= {\Diamond}^c h(x) & h(\Box x) &= {\Box}^c h(x)
\end{align*}
Note that the proof of Lemma 4.4 in \cite{OrlRew07} contains some mistakes, but the proof is
corrected in \cite{OrlRadRew13}. We can extend this result to H2GC+FS-algebras.

\begin{lemma} \label{Lem:Homomorphism}
Let  $(H,\vee,\wedge,\to,0,1, \Diamond, \Box,\DIAMOND, \BOX)$ be an H2GC+FS-algebra.
\begin{enumerate}[\rm (a)]
 \item $h(\DIAMOND x) =  {\DIAMOND}^c h(x)$;
 \item $h(\BOX x) = {\BOX}^c h(x)$.
\end{enumerate}
\end{lemma}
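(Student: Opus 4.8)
The plan is to prove the two identities in exact parallel with the already-established cases $h(\Diamond x) = {\Diamond}^c h(x)$ and $h(\Box x) = {\Box}^c h(x)$, exploiting the fact (noted just before the lemma) that $(X(H),\subseteq,R^c)$ is the canonical frame of \emph{both} HK1-reducts of $\mathbb{H}$: the reduct with $(\Diamond,\Box)$ and the reduct with $(\DIAMOND,\BOX)$, the latter because of Lemma~\ref{Lem:CanFrameConnection}, which says $F \, R^c \, G \iff {\BOX}^{-1}G \subseteq F \subseteq {\DIAMOND}^{-1}G$. In other words, the relation $R^c$ ``sees'' $\DIAMOND,\BOX$ through its inverse in precisely the way it sees $\Diamond,\Box$ directly, so the Or{\l}owska--Rewitzky proof for $\Diamond,\Box$ transports verbatim with $R^c$ replaced by $(R^c)^{-1}$.

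For part (a), I would unwind the definition: ${\DIAMOND}^c h(x) = \{ F \in X(H) \mid (\exists G)\, G \, R^c \, F \ \&\ x \in G \}$, using the simplified characterisation ${\DIAMOND}^c A = \{x \mid (\exists y)\, y\,R\,x \ \&\ y \in A\}$ recorded in the excerpt. So I must show $\DIAMOND x \in F \iff$ there is a prime filter $G$ with $G \, R^c \, F$ and $x \in G$. For the forward direction, assume $\DIAMOND x \in F$; using Lemma~\ref{Lem:CanFrameConnection} I need a prime filter $G$ containing $x$ with ${\BOX}^{-1}F \subseteq G \subseteq {\DIAMOND}^{-1}F$. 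The candidate is a prime filter separating the filter generated by ${\BOX}^{-1}F \cup \{x\}$ from the ideal $-{\DIAMOND}^{-1}F$; the key computation is that this filter and ideal are disjoint, which follows from the Galois-connection inequality $x \le {\BOX}{\DIAMOND} x$ together with $\DIAMOND x \in F$ and the additivity/multiplicativity of $\DIAMOND,\BOX$ — exactly mirroring how $x \le {\Box}{\DIAMOND}x$ is used, except that here one needs $\Diamond\BOX x \le x$ and $x \le \BOX\Diamond x$ from the HGC-structure of the $(\Diamond,\BOX)$-pair and the $(\DIAMOND,\Box)$-pair. For the converse, if $G\,R^c\,F$ and $x \in G$, then $x \in G \subseteq {\DIAMOND}^{-1}F$ directly by Lemma~\ref{Lem:CanFrameConnection}, so $\DIAMOND x \in F$. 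Part (b) is dual: ${\BOX}^c h(x) = \{F \mid (\forall G)\, G\,(R^c \circ {\supseteq})\,F \Rightarrow x \in G\}$ unwinds via Lemma~\ref{Lem:CanFrameConnection2}(b), which gives $G\,(R^c \circ {\supseteq})\,F \iff {\BOX}^{-1}F \subseteq G$, reducing the claim to $\BOX x \in F \iff (\forall G \text{ prime filter})\, {\BOX}^{-1}F \subseteq G \Rightarrow x \in G$; the nontrivial ($\Leftarrow$) direction needs that if $\BOX x \notin F$ then some prime filter contains ${\BOX}^{-1}F$ but not $x$, again by a Prime Filter Theorem separation, with disjointness of the relevant filter and ideal secured by the inequalities linking $\DIAMOND$ and $\BOX$.

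I expect the main obstacle to be the bookkeeping in the separation arguments — verifying that the generated filter and the chosen ideal are genuinely disjoint. This is where the specific interaction axioms must be invoked correctly, and where one has to be careful about which of the four Brouwerian-type inequalities ($x \le \BOX\Diamond x$, $\Diamond\BOX x \le x$, $x \le \Box\DIAMOND x$, $\DIAMOND\Box x \le x$) is the right one; the note in the excerpt that Lemma~4.4 of \cite{OrlRew07} ``contains some mistakes'' corrected in \cite{OrlRadRew13} is a warning that exactly this step is delicate. However, since no Fischer Servi axiom is needed for these particular identities (they concern only the HGC-structure of the two Galois-connection pairs, and $R^c$ already encodes both via Lemma~\ref{Lem:CanFrameConnection}), I would simply cite the corrected proof in \cite{OrlRadRew13} for the $(\Diamond,\Box)$-case and observe that replacing $R^c$ by $(R^c)^{-1}$ throughout — legitimate by Lemma~\ref{Lem:CanFrameConnection} and the remark that $(X,\leq,R)$ is an {\sf IK}-frame iff $(X,\leq,R^{-1})$ is — turns it into a proof of (a) and (b) with essentially no new content. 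Thus the write-up can be short: state the dualisation principle, point to the analogous computation, and fill in only the one or two inequalities that change.
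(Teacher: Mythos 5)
Your overall architecture is close to the paper's: the easy inclusions are handled exactly as in the paper via Lemma~\ref{Lem:CanFrameConnection}, part (b) is reduced through Lemma~\ref{Lem:CanFrameConnection2}(b) just as the paper does, and the witness prime filters are produced by Prime Filter Theorem separations. But there is a genuine gap in your part (a), and more broadly in your claim that ``no Fischer Servi axiom is needed for these particular identities.'' That claim is false, and the inequality you invoke to secure disjointness, $x \le \BOX\DIAMOND x$, is not available: the Galois connections of an H2GC+FS-algebra are $(\Diamond,\BOX)$ and $(\DIAMOND,\Box)$, so the valid mixed inequalities are $x\le\BOX\Diamond x$, $\Diamond\BOX x\le x$, $x\le\Box\DIAMOND x$, $\DIAMOND\Box x\le x$; the pair $(\DIAMOND,\BOX)$ is not a Galois connection. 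Concretely, to show that the filter generated by ${\BOX}^{-1}F\cup\{x\}$ is disjoint from the ideal $-{\DIAMOND}^{-1}F$ you must pass from $b\wedge x\le a$, $\BOX b\in F$, $\DIAMOND x\in F$ to $\DIAMOND a\in F$, and the step that does this is precisely the interaction law (D2), $\DIAMOND x\wedge\BOX b\le\DIAMOND(x\wedge b)$, equivalent to (FS2)/(FS3) by Proposition~\ref{Prop:DunnConditions}; additivity and multiplicativity of $\DIAMOND,\BOX$ alone do not suffice. Indeed, in the H2GC-algebra of Example~\ref{Exa:Independent} (two Galois connections but no FS/Dunn condition), take $F={\uparrow}c$ and $x=a$: then $\DIAMOND a=a\in F$ and $b\in{\BOX}^{-1}F$, yet $a\wedge b=c$ and $\DIAMOND c=0\notin F$, so your filter meets your ideal and no sandwiched prime filter $G$ with ${\BOX}^{-1}F\cup\{x\}\subseteq G\subseteq{\DIAMOND}^{-1}F$ exists. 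The same FS dependence is hidden in your fallback ``dualisation'' argument: transporting the Or{\l}owska--Rewitzky computation to the pair $(\DIAMOND,\BOX)$ requires that $(H,\vee,\wedge,\to,0,1,\DIAMOND,\BOX)$ be an HK1-algebra whose canonical relation is $(R^c)^{-1}$; the second point is Lemma~\ref{Lem:CanFrameConnection}, but the first is exactly Corollary~\ref{Cor:CharacteringAlgebra}(a), i.e.\ the Fischer Servi conditions.

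Once this is repaired your proof does go through and is essentially the paper's. The paper obtains the witness for (a) by separating ${\uparrow}x$ from $-{\DIAMOND}^{-1}F$ (which needs only monotonicity of $\DIAMOND$) and then invoking Lemma~\ref{Lem:CanFrameConnection2}(a) --- whose proof is where the FS condition does the work you tried to avoid --- together with the $({\le}\circ R^c)$-form of ${\DIAMOND}^c$; your one-shot separation with ${\BOX}^{-1}F\cup\{x\}$ works equally well, but only after (D2) is invoked at the disjointness step. Part (b) as you describe it coincides with the paper's argument (and there the disjointness of the filter ${\BOX}^{-1}F$ and the ideal ${\downarrow}x$ indeed needs nothing beyond $\BOX$ being multiplicative and order-preserving), with the FS input entering through Lemma~\ref{Lem:CanFrameConnection2}(b).
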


\begin{proof}
(a) Suppose that $F \in {\DIAMOND}^c h(x)$. This means that there is $G \in X(H)$ such that $G \, R^c \, F$
and $G \in h(x)$. Now, $x \in G \subseteq \DIAMOND^{-1}F$ and $\DIAMOND x \in F$, that is, $F \in h(\DIAMOND x)$.
On the other hand, assume $F \in h(\DIAMOND x)$, that is, $\DIAMOND x \in F$. Suppose
${\uparrow}x \cap - \DIAMOND^{-1}F \neq \emptyset$. Then, there exists $y \in H $ such that
$x \leq y$ and $y \notin \DIAMOND^{-1}F$. We have $\DIAMOND y \notin F$ and $\DIAMOND x \leq \DIAMOND y$,
which give $\DIAMOND x \notin F$, a contradiction. Therefore, ${\uparrow}x \cap - \DIAMOND^{-1}F = \emptyset$
and ${\uparrow}x \subseteq \DIAMOND^{-1}F$. Now ${\uparrow}x$ is a filter and $-\DIAMOND^{-1}F$
is an ideal, as we already noted. Then, by the Prime Filter Theorem of distributive lattices, there is a prime filter 
$K \in X(H)$ such that ${\uparrow}x \subseteq K$ and $K \cap -\DIAMOND^{-1}F = \emptyset$, that is, $K \subseteq \DIAMOND^{-1}F$. 
Lemma~\ref{Lem:CanFrameConnection2}(a) implies $K \, ({\subseteq} \circ R^c) \, F$. Since $x \in K$, we have $K \in h(x)$ and so 
$F \in {\DIAMOND}^c h(x)$.

(b) Suppose $F \in h(\BOX x)$, that is, $\BOX x \in F$. If $G \, (R^c \circ {\supseteq}) \, F$, then there is
$K \supseteq F$ such that $\BOX^{-1}K \subseteq G \subseteq \DIAMOND^{-1}K$. Then, $\BOX x \in F \subseteq K$
and $x \in \BOX^{-1} K \subseteq G$. We get $G \in h(x)$ and $F \in {\BOX}^c h(x)$, as required.
Conversely, assume that $F \notin h(\BOX x)$. Then, $x \notin \BOX^{-1}F$ and ${\downarrow}x \cap \BOX^{-1}F = \emptyset$.
Note that $\BOX^{-1}F$ is a filter and ${\downarrow}x$ is an ideal. This means that by the Prime Filter Theorem of distributive lattices, there is a prime filter
$G \in X(H)$ such that $\BOX^{-1}F \subseteq G$ and ${\downarrow} x \cap G = \emptyset$. Then,
$x \notin G$, $G \notin h(x)$, and by Lemma~\ref{Lem:CanFrameConnection2}(b), $G \, (R^c \circ {\supseteq}) \, F$.
Thus, $F \notin {\BOX}^c h(x)$.
\end{proof}

As a corollary, we write the following representation theorem for H2GC+FS-algebras.

\begin{theorem} \label{Thm:Representation}
Every H2GC+FS-algebra can be embedded into the complex algebra of its canonical {\sf IK}-frame.
\end{theorem}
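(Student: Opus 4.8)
The plan is to exhibit the map $h \colon H \to C(X(H))$, $h(x) = \{F \in X(H) \mid x \in F\}$, already introduced above, and to check that it is an injective homomorphism of H2GC+FS-algebras. First I would record that the target is a legitimate object of the right kind: the canonical frame $(X(H),\subseteq,R^c)$ is an {\sf IK}-frame by the Or{\l}owska--Rewitzky result quoted before Lemma~\ref{Lem:CanFrameConnection}, and since $(X,\leq,R)$ is an {\sf IK}-frame exactly when $(X,\leq,R^{-1})$ is, the complex algebra $C(X(H)) = (\mathcal{T}_\subseteq,\cup,\cap,\to^c,\emptyset,X(H),\Diamond^c,\Box^c,\DIAMOND^c,\BOX^c)$ is an H2GC+FS-algebra, as was established there. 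I would also note in passing that each $h(x)$ is $\subseteq$-closed, hence a member of $\mathcal{T}_\subseteq$, because $x \in F$ and $F \subseteq G$ force $x \in G$; so $h$ really maps into $C(X(H))$.

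Next I would verify that $h$ preserves all the operations. For $\vee$, $\wedge$, $\to$, $0$, $1$ and the pair $(\Diamond,\Box)$ this is precisely the list of identities proved for {\sf HK1}-algebras in \cite{OrlRadRew13} and recalled just before Lemma~\ref{Lem:Homomorphism}, and the two remaining identities $h(\DIAMOND x) = \DIAMOND^c h(x)$ and $h(\BOX x) = \BOX^c h(x)$ are exactly the content of Lemma~\ref{Lem:Homomorphism}. Hence $h$ is an H2GC+FS-algebra homomorphism.

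It then remains to show $h$ is injective. Here I would argue contrapositively: if $x \neq y$ in $H$, then, since the underlying order of $H$ is a partial order, without loss of generality $x \not\leq y$, so the filter ${\uparrow}x$ and the ideal ${\downarrow}y$ are disjoint (indeed $y \notin {\uparrow}x$), and the Prime Filter Theorem for distributive lattices yields a prime filter $F \in X(H)$ with $x \in F$ and $y \notin F$. Thus $F \in h(x) \setminus h(y)$, so $h(x) \neq h(y)$. Combined with the homomorphism property, this makes $h$ an embedding, which is the assertion of the theorem.

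I do not expect this final step to pose any real difficulty: the genuine work has already been discharged in Lemmas~\ref{Lem:CanFrameConnection}, \ref{Lem:CanFrameConnection2}, and \ref{Lem:Homomorphism}, where the two operator pairs had to be reconciled with the single canonical relation $R^c$ via the Fischer Servi axioms (FS3), (FS4) and repeated applications of the Prime Filter Theorem. The only point in the theorem that is not pure bookkeeping is that $h$ respects the ``past'' operators $\DIAMOND$, $\BOX$, and that is settled by Lemma~\ref{Lem:Homomorphism}; so the theorem is essentially a corollary of the preceding lemmas together with the injectivity argument sketched above.
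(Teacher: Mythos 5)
Your proposal is correct and follows essentially the same route as the paper, which states the theorem as a direct corollary of the homomorphism identities quoted from \cite{OrlRadRew13} together with Lemma~\ref{Lem:Homomorphism}, the complex algebra having already been verified to be an H2GC+FS-algebra. Your explicit injectivity argument via the Prime Filter Theorem (separating $x \not\leq y$ by a prime filter) is exactly the standard step the paper leaves implicit, so nothing is missing.
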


\begin{remark} \label{Rem:Representation}
In general, the embedding $h$ is not an isomorphism, but in some cases it can be also surjective. For instance,
in \cite[Theorem~7.2]{DzJaKo10}, we showed that every finite HGC-algebra  is isomorphic to the complex
algebra of its canonical frame, and a similar proof could be presented here. More generally, in \cite[Theorem~18]{DzJaKo14},
we showed that for every spatial HGC-algebra $\mathbb{H}$, there exists an {\sf IntGC}-frame $\mathcal{F}$ such that 
$\mathbb{H}$ is isomorphic to the complex algebra of $\mathcal{F}$. The same idea could be applied for H2GC+FS-algebras,
because it is known that spatial (and thus complete) Heyting algebras are order-isomorphic to some Alexandrov topologies.

Note also that a representation theorem for tense symmetric Heyting algebras is given in \cite{figallo2012TSH}, but their
algebras differ essentially from ours. 
\end{remark}

In terms of Theorem~\ref{Thm:Representation}, we can prove the Key Lemma.

\begin{lemma}\label{Lem:Key}
Let  $(H,\vee,\wedge,\to,0,1, \Diamond, \Box,\DIAMOND, \BOX)$ be an H2GC+FS-algebra.
Then, for all $A \in \Phi$ and $F \in X(H)$,
\begin{equation} \label{Eq:Key}
 F \models_c A \iff v(A) \in F. 
\end{equation}
\end{lemma}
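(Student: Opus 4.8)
The plan is to prove \eqref{Eq:Key} by induction on the structure of $A$, working in the canonical model obtained by equipping the canonical frame $(X(H),\subseteq,R^c)$ with the satisfiability relation $\models_c$ defined by $F\models_c p \iff v(p)\in F$. The base case is then immediate by that very definition. For the propositional connectives $\wedge$, $\vee$, $\to$, $\neg$ the argument is the standard one for intuitionistic Kripke completeness over the prime-filter frame of a Heyting algebra: the $\wedge$ and $\vee$ steps use that prime filters are closed under meets and that $a\vee b\in F$ forces $a\in F$ or $b\in F$; the $\to$ and $\neg$ steps use the Prime Filter Theorem to manufacture, whenever $v(A\to B)\notin F$, a prime filter $G\supseteq F$ with $v(A)\in G$ and $v(B)\notin G$. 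These are exactly the steps carried out for the HK1-reducts in \cite{OrlRadRew13}, and Lemma~\ref{Lem:Perisisent} guarantees that each set $\{F\in X(H)\mid F\models_c A\}$ is $\subseteq$-closed, hence an element of the complex algebra $C(X(H))$.

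For the modal operators I would not check each clause by hand but instead use the homomorphism $h\colon H\to C(X(H))$, $h(x)=\{F\in X(H)\mid x\in F\}$, of Lemma~\ref{Lem:Homomorphism} together with the corresponding identities for $\vee,\wedge,\to,0,1,\Diamond,\Box$ from \cite{OrlRadRew13}. The case of the pair $(\Diamond,\Box)$ is already the truth lemma for the HK1-reduct $(H,\vee,\wedge,\to,0,1,\Diamond,\Box)$, so it may be quoted. For $\DIAMOND A$, the inductive hypothesis says $\{G\mid G\models_c A\}=h(v(A))$; comparing the {\sf IK$^2$}-satisfiability clause for $\DIAMOND$ on the frame $(X(H),\subseteq,R^c)$ with the definition of $\DIAMOND^c$ gives $\{F\mid F\models_c\DIAMOND A\}=\DIAMOND^c\{G\mid G\models_c A\}=\DIAMOND^c h(v(A))=h(\DIAMOND v(A))=h(v(\DIAMOND A))$, which by the definition of $h$ is precisely $F\models_c\DIAMOND A\iff v(\DIAMOND A)\in F$. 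The case of $\BOX A$ is identical, using $\BOX^c$ and Lemma~\ref{Lem:Homomorphism}(b).

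If one prefers to avoid $h$, the same two cases can be done directly. For $\BOX A$, Lemma~\ref{Lem:CanFrameConnection2}(b) rewrites the clause as ``$v(A)\in G$ for every prime filter $G\supseteq\BOX^{-1}F$'', which by the Prime Filter Theorem holds iff $v(A)\in\BOX^{-1}F$, i.e.\ iff $v(\BOX A)\in F$. For $\DIAMOND A$, Lemma~\ref{Lem:CanFrameConnection} turns the existential clause into the existence of a prime filter $G$ with $\BOX^{-1}F\subseteq G\subseteq\DIAMOND^{-1}F$ and $v(A)\in G$; one direction is trivial ($v(A)\in G\subseteq\DIAMOND^{-1}F$ yields $\DIAMOND v(A)\in F$), and the other is a routine filter/ideal separation against the ideal $-\DIAMOND^{-1}F$, mimicking the proof of Lemma~\ref{Lem:Homomorphism}(a).

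The only real work lies in the bookkeeping of the canonical accessibility relations $R^c$, ${\subseteq}\circ R^c$ and $R^c\circ{\supseteq}$ against the operators $\Box^{-1},\Diamond^{-1},\BOX^{-1},\DIAMOND^{-1}$, plus the Prime Filter Theorem separations in the $\to$ and $\DIAMOND/\BOX$ steps; but all of this has already been packaged into Lemmas~\ref{Lem:CanFrameConnection}, \ref{Lem:CanFrameConnection2} and \ref{Lem:Homomorphism}, so once those are available the induction itself is entirely routine. I therefore expect the written proof to amount to citing \cite{OrlRadRew13} for the connectives and for $(\Diamond,\Box)$, and giving the two short chains of equalities above for $\DIAMOND$ and $\BOX$.
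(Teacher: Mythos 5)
Your proposal is correct and takes essentially the same route as the paper: an induction that dismisses the propositional connectives as standard and settles the modal cases via Lemma~\ref{Lem:CanFrameConnection2} and Lemma~\ref{Lem:Homomorphism} (the paper treats $\Diamond$ and $\BOX$ explicitly and declares $\DIAMOND$, $\Box$ analogous). Your chain $\{F \mid F \models_c \DIAMOND A\} = {\DIAMOND}^c h(v(A)) = h(v(\DIAMOND A))$ is just a slightly slicker packaging of the same argument, with Lemma~\ref{Lem:Perisisent} correctly invoked so that the satisfaction sets lie in the complex algebra and the satisfiability clauses match the complex operators.
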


\begin{proof} We consider the operators $\Diamond$ and $\BOX$ only, because for connectives $\vee$, $\wedge$, $\to$
the claim is well known, and for $\DIAMOND$ and $\Box$ the proof is analogous.

($\Diamond$) \ Suppose $A = \Diamond B$ for some $B \in \Phi$ and $B$ satisfies \eqref{Eq:Key}. If $F \models_c \Diamond B$, then
there is a prime filter $G$ such that $F \, R^c \, G$ and $G \models_c B$, that is, $v(B) \in G$. Now,
$G \subseteq \Diamond^{-1}F$ implies $v(B) \in \Diamond^{-1}F$ and $v(A) = v(\Diamond B) = \Diamond v(B) \in F$.
Conversely, suppose $v(A) \in F$, that is, $F \in h(v(A)) = h(\Diamond v(B)) = {\Diamond}^c h(v(B))$. Then, there
exists $G \in X(H)$ such that $G \in h(v(B))$ and $F \, R^c \, G$, that is, $v(B) \in G$, or equivalently $G \models_c B$. Hence,
$F \models_c \Diamond B$.

($\BOX$) \ Suppose that $A = \BOX B$ and $B$ satisfies \eqref{Eq:Key}. Assume that $v(\BOX B) = \BOX v(B) \in F$. If
$G \, (R^c \circ {\supseteq}) \, F$, then by  Lemma~\ref{Lem:CanFrameConnection2}(b),
$\BOX^{-1} F \subseteq G$. Then, $\BOX v(B) \in F$ gives $v(B) \in G$, $G \models_c B$, and $F \models_c \BOX B$.
On the other hand, if $F \models_c \BOX B$, then $G \, (R^c \circ {\supseteq}) \, F$ implies $G \models_c B$, that is,
$v(B) \in G$ and $G \in h(v(B))$. Hence, $F \in {\BOX}^c h(v(B)) = h ( \BOX v(B)) = h(v(\BOX B))$ and $v(\BOX B) \in F$. 
\end{proof}

We are now able to prove relational completeness.

\begin{theorem}
Every formula $A \in \Phi$ is {\sf Int2GC+FS}-provable if and only if $A$ is relationally valid.
\end{theorem}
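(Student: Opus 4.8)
The plan is to prove the two implications separately, and most of the work has already been set up. The ``only if'' direction is exactly the earlier Proposition stating that every {\sf Int2GC+FS}-provable formula is relationally valid, so nothing new is needed there. For the ``if'' direction I would argue by contraposition: assuming $A$ is not {\sf Int2GC+FS}-provable, I would exhibit an {\sf IK$^2$}-model and a point refuting $A$, using the canonical model of the Lindenbaum--Tarski algebra.

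Concretely, I would first take $H = \mathcal{F}(\Phi)/{\equiv}$, which by the discussion preceding Theorem~\ref{Thm:CompletenessI} is an H2GC+FS-algebra carrying the canonical valuation $v(p) = [p]$ with $v(A) = [A]$ for every formula $A$. Since $A$ is not provable, $A \leftrightarrow \top$ is not provable, hence $v(A) = [A] \neq [\top] = 1$ in $H$. Then the filter $\{1\}$ and the principal ideal ${\downarrow}[A]$ are disjoint, so by the Prime Filter Theorem of distributive lattices there is a prime filter $F \in X(H)$ with $[A] \notin F$. Next I would check that the canonical structure $(X(H),\subseteq,R^c,\models_c)$ is a genuine {\sf IK$^2$}-model: the frame $(X(H),\subseteq,R^c)$ is an {\sf IK}-frame by the result of Or{\l}owska and Rewitzky recalled above (with $\subseteq$ a partial, hence quasi, order), and $\models_c$ is persistent on variables because $v(p) \in F$ together with $F \subseteq G$ gives $v(p) \in G$. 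With these hypotheses in place, Lemma~\ref{Lem:Key} applies and yields $F \models_c A \iff v(A) \in F$; since $v(A) = [A] \notin F$, we obtain $F \not\models_c A$, so $A$ is not relationally valid, which is the contrapositive of the desired implication.

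I do not expect a serious obstacle: the genuinely hard ingredients --- algebraic completeness via Lindenbaum--Tarski algebras, the fact that canonical frames of H2GC+FS-algebras are {\sf IK}-frames, and the truth lemma (Lemma~\ref{Lem:Key}) --- are already available. The only points that require care are the prime-filter separation of $1$ from $[A]$ and the verification that the canonical structure satisfies the definition of an {\sf IK$^2$}-model so that the Key Lemma is applicable; both are routine, so the proof will essentially be the short assembly of these facts.
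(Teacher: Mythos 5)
Your proof is correct and follows essentially the same route as the paper: soundness via the earlier proposition, and for completeness a refuting point in the canonical model of a countermodel algebra obtained through the Key Lemma. The only cosmetic difference is that you work directly with the Lindenbaum--Tarski algebra and invoke the Prime Filter Theorem explicitly to separate $[A]$ from $1$, whereas the paper cites algebraic completeness and uses the embedding $h$ (so $h(v(A)) \neq X(H)$) to obtain the prime filter; these amount to the same argument.
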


\begin{proof} We have already noted that every {\sf Int2GC+FS}-provable formula is relationally valid.
On the other hand, if $A$ is not  {\sf Int2GC+FS}-provable, there exists an H2GC+FS-algebra on some set $H$
and a valuation $v$ such that $v(A) \ne 1$. Let $(X(H),\subseteq,R,\models_c)$ be the corresponding canonical frame. 
Now,  $h(v(A)) \neq X(H)$, which implies that there is a prime filter $F$ such that $v(A) \notin F$.
Using the Key Lemma, this implies $F \not \models_c A$, and thus $A$ is not relationally valid.
\end{proof}

Kripke completeness of {\sf K$_t$} is provided by Kripke frames $(X,R)$, where $R$ is an arbitrary binary relation on $X$. Thus, relational
completeness for {\sf Cl2GC+FS} is standard. Note also that the frames $(X,R)$ can be considered as relational
{\sf IK}-frames $(X,R,=)$ and then the satisfiability relation is the same in both settings.
The result analogous to Theorem~\ref{Thm:Representation} but for classical logic follows from the fundamental representation theorem for 
Boolean algebras with operators by J{\'o}nsson and Tarski \cite{JoTa51}, which says that every Boolean algebra with additive and normal 
operators can be embedded into the complex of its canonical frame. Therefore, if we have a Boolean algebra with two Galois connections $(\Diamond, \BOX)$
and $(\DIAMOND, \Box)$, then the operators $\Diamond$ and $\DIAMOND$ are additive and normal. Since $\Box$ and $\BOX$ are connected to
$\Diamond$ and $\DIAMOND$ by Fischer Servi axioms, this means that they are completely determined by De~Morgan dualities.

As far as relational semantics is concerned, moving from intuitionistic logic (and classical logic) to intermediate logics, as a ``base logic'', 
is no longer as straightforward as in the case of algebraic semantics. Some intermediate logics are Kripke-incomplete, that is, they do not have adequate 
relational semantics. Since canonicity proofs for intermediate logics have different patterns (there are various kinds of canonicity 
like hypercanonicity, $\omega$-canonicity, extensive canonicity; see \cite{GhiMig99}, for instance), a uniform approach to completeness for 
intermediate logics seems unlikely.  Therefore, the relational completeness and the representation theorems in case of particular intermediate 
logic are left for separate studies. 

\section{Some concluding remarks}

We have introduced method (A), which for each intermediate logic {\sf L} uniformly defines the corresponding tense logic {\sf LK$_t$}.
Method (B) introduced by Davoren \cite{Davoren} cannot be applied to every intermediate logic {\sf L}, because this method first
builds the fusion {\sf LK}\FUSE{\sf LK} of two copies of intuitionistic modal logic {\sf LK} and then adds Brouwerian axioms interlinking
the modalities -- but in many cases, it is unclear, what  the modal logic {\sf LK} actually is.

Approach (A) allows a uniform treatment of algebraic semantics and we have shown algebraic completeness of {\sf L2GC+FS} for any intermediate logic {\sf L}. 
It is well known that there are many intermediate logics that are proved to be Kripke-incomplete and also there are several intermediate
logics for which even their frames are not known.
Here, we have presented a completeness theorem for {\sf Int2GC+FS} = {\sf IK$_t$} in a way that uses 
{\sf IK}-frames introduced by Fischer Servi \cite{FishServ84}. This hints that in an analogous way, Kripke completeness of {\sf L2GC+FS} = {\sf LK$_t$} 
can be proved for  several intermediate modal logics {\sf L} that are known to be at least Kripke-complete. For instance, G{\"o}del--Dummett 
logic is characterized by the frames $(X,\leq)$ such that $(x \leq y \mbox { and } x \leq z) \: \Rightarrow \: (y  \leq z \mbox { or  } z \leq y)$. 
For other examples, see \cite{ChaZak97}. Notice also that different intermediate logics may need different tools to carry out a completeness proof, and these
``apparatuses'' are considered in \cite{GhiMig99}.

We have also presented a representation theorem stating that every H2GC+FS-algebra can be embedded into the complex 
algebra of its canonical {\sf IK}-frame. A similar proof for some intermediate logic algebras probably can be obtained, but this requires careful study of
complex algebras and canonical frames. These will be studied in the future.

\section*{Acknowledgement}

The authors of this paper thank the anonymous referee for the comments and suggestions which helped us to improve our paper significantly.


\begin{bibdiv}
\begin{biblist}

\bib{alechina}{article}{
      author={Alechina, Natasha},
      author={Mendler, Michael},
      author={Paiva, Valeria},
      author={Ritter, Eike},
       title={Categorical and {K}ripke semantics for constructive {S4} modal logic},
        date={2001},
     journal={Lecture Notes in Computer Science},
      volume={2142},
       pages={292\ndash 307},
}

\bib{BaDw74}{book}{
      author={Balbes, Raymond},
      author={Dwinger, {Ph}ilip},
       title={Distributive Lattices},
   publisher={University of Missouri Press},
     address={Columbia, Missouri},
        date={1974},
}

\bib{Benton95}{article}{
      author={Benton, P.~N.},
       title={A Mixed Linear and Non-Linear Logic: Proofs, Terms and Models (Preliminary Report)},
        eprint={http://citeseerx.ist.psu.edu/viewdoc/summary?doi=10.1.1.13.7158}
        date={1994},
}

\bib{BiermanPaiva99}{article}{
      author={Bierman, Gavin~M.},
      author={de~Paiva, Valeria},
       title={On an intuitionistic modal logic},
        date={2000},
     journal={Studia Logica},
      volume={65},
       pages={383\ndash 416},
}

\bib{Belo99}{article}{
      author={B{\u e}lohl{\'a}vek, Radim},
       title={Fuzzy {G}alois connections},
        date={1999},
     journal={Mathematical Logic Quarterly},
      volume={45},
       pages={497\ndash 504},
}

\bib{BlokPig89}{book}{
      author={Blok, Wim},
	author={Pigozzi, Don},
       title={Algebraizable Logics},
        date={1989},
     series={Memoirs of the AMS},
      volume={77, nr.~396},
       publisher={American Mathematical Society},
       address={Providence, Rhode Island}
}

\bib{BuSa81}{book}{
      author={Burris, Stanley~N.},
      author={Sankappanavar, H.~P.},
       title={A Course in Universal Algebra},
      series={Graduate Texts in Mathematics},
   publisher={Springer},
     address={New York},
        date={1981},
      volume={78},
}

\bib{Caicedo10}{article}{
      author={Caicedo, Xavier},
      author={Rodriguez, Ricardo~O.},
      title={Standard {G}{\"o}del modal logics},
      journal={Studia Logica},
      date={2010},
      pages={189\ndash 214},
}

\bib{Caicedo12}{article}{
      author={Caicedo, Xavier},
      author={Rodriguez, Ricardo~O.},
      title={Bi-modal {G}{\"o}del logic over $[0,1]$-valued {K}ripke frames},
      journal={Journal of Logic and Computation},
      date={2012},
      doi={10.1093/logcom/exs036},
}

\bib{Caicedo13}{article}{
      author={Caicedo, Xavier},
      author={Metcalfe, George},
      author={Rodriguez, Ricardo~O.},
      author={Rogger, Jonas},
       title={A Finite Model Property for {G}{\"o}del  Modal Logics},
        date={2013},
     journal={Lecture Notes in Computer Science},
      volume={ 8071},
       pages={226\ndash 237},
}

\bib{ChaZak97}{book}{
      author={Chagrov, Alexander},
      author={Zakharyaschevr, Michael},
       title={Modal Logic},
   publisher={Oxford University Press},
     address={Oxford},
        date={1997},
}

\bib{chajda11}{article}{
      author={Chajda, Ivan},
       title={Algebraic axiomatization of tense intuitionistic logic},
        date={2011},
     journal={Central European Journal of Mathematics},
      volume={9},
       pages={1185\ndash 1191},
}

\bib{Davoren}{article}{
      author={Davoren, J.~M.},
       title={Topological Semantics and Bisimulations for Intuitionistic Modal Logics and Their Classical Companion Logics},
        date={2007},
         journal={Lecture Notes in Computer Science},
      volume={4514},
       pages={162\ndash 179},
}

\bib{Dunn94}{article}{
      author={Dunn, J.~Michael},
       title={Positive modal logic},
        date={1995},
     journal={Studia Logica},
      volume={55},
       pages={301\ndash 317},
}

\bib{DzJaKo10}{article}{
      author={Dzik, Wojciech},
      author={J{\"a}rvinen, Jouni},
      author={Kondo, Michiro},
       title={Intuitionistic propositional logic with {G}alois connections},
        date={2010},
     journal={Logic Journal of the IGPL},
      volume={18},
       pages={837\ndash 858},
}

\bib{DzJaKo12}{article}{
      author={Dzik, Wojciech},
      author={J{\"a}rvinen, Jouni},
      author={Kondo, Michiro},
       title={Intuitionistic modal logic with a {G}alois connection has the finite model property},
        date={2013},
     journal={Logic Journal of the IGPL},
      volume={21},
       pages={199\ndash 204},
}

\bib{DzJaKo12A}{article}{
      author={Dzik, Wojciech},
      author={J{\"a}rvinen, Jouni},
      author={Kondo, Michiro},
       title={Intuitionistic  logic with two {G}alois connections combined with {F}ischer {S}ervi  axioms},
       journal={\tt arXiv:1208.2971 [math.LO]},
        date={2012}
}

\bib{DzJaKo14}{article}{
      author={Dzik, Wojciech},
      author={J{\"a}rvinen, Jouni},
      author={Kondo, Michiro},
       title={Representing expansions of bounded distributive lattices with Galois connections in terms of rough sets},
        date={2014},
     journal={International Journal of Approximate Reasoning},
      volume={55},
       pages={427\ndash 435},
}

\bib{ErKoMeSt93}{article}{
      author={Ern{\'e}, M.},
      author={Koslowski, J.},
      author={Melton, A.},
      author={Strecker, G.~E.},
       title={A primer on {G}alois connections},
        date={1993},
     journal={Annals of the New York Academy of Sciences},
      volume={704},
       pages={103\ndash 125},
}

\bib{Ewald86}{article}{
      author={Ewald, W.~B.},
       title={Intuitionistic tense and modal logic},
        date={1986},
     journal={The Journal of Symbolic Logic},
      volume={51},
       pages={166\ndash 179},
}

\bib{figallo2012remarks}{article}{
      author={Figallo, A.~V.},
      author={Pelaitay, G.},
       title={Remarks on {H}eyting algebras with tense operators},
        date={2012},
     journal={Bulletin of the Section of Logic},
      volume={41},
       pages={71\ndash 74},
}

\bib{figallo2012TSH}{article}{
      author={Figallo, Aldo~V.},
      author={Pelaitay, Gustavo},
      author={Sanza, Claudia},
       title={Discrete duality for {TSH}-algebras},
        date={2012},
     journal={Communications of the Korean Mathematical Society},
      volume={27},
       pages={47\ndash 56},
}

\bib{FishServ84}{article}{
      author={{Fischer Servi}, Gisele},
       title={Axiomatizations for some intuitionistic modal logics},
        date={1984},
     journal={Rendiconti del Seminario Matematico della Universit{\`a} Politecnica di Torino},
      volume={42},
       pages={179\ndash 194},
}

\bib{GarciaPardo2013}{article}{
      author={Garc\'ia~Pardo, F.},
      author={Cabrera, I.P.},
      author={Cordero, P.},
      author={Ojeda-Aciego, Manuel},
       title={On {G}alois connections and soft computing},
        date={2013},
     journal={Lecture Notes in Computer Science},
      volume={7903},
       pages={224\ndash 235},
}

\bib{GeoPop04}{article}{
      author={Georgescu, George},
      author={Popescu, Andrei},
       title={Non-dual fuzzy connections},
        date={2004},
     journal={Archive for Mathematical Logic},
      volume={43},
        pages={1009\ndash 1039},
}

\bib{GhiMig99}{article}{
      author={Ghilardi, Silvio},
	author={Miglioli, Pierangelo},
	 title={ On canonicity and strong completeness conditions in intermediate propositional logics},
  	date={1999},
	 journal={Studia Logica},
	 volume={63},
	 pages={353\ndash 385},
}

\bib{Grat98}{book}{
      author={Gr{\"a}tzer, George},
       title={General lattice theory},
     edition={2nd ed.},
   publisher={Birkh{\"a}user},
     address={Basel},
        date={1998},
}

\bib{JaKoKo06}{article}{
      author={J{\"a}rvinen, Jouni},
      author={Kondo, Michiro},
      author={Kortelainen, Jari},
       title={Modal-like operators in {B}oolean algebras, {G}alois connections and fixed points},
        date={2007},
     journal={Fundamenta Informaticae},
      volume={76},
      pages={129\ndash 145},
}

\bib{JaKoKo08}{article}{
      author={J{\"a}rvinen, Jouni},
      author={Kondo, Michiro},
      author={Kortelainen, Jari},
       title={Logics from {G}alois connections},
        date={2008},
     journal={International Journal of Approximate Reasoning},
      volume={49},
       pages={595\ndash 606},
}

\bib{JoTa51}{article}{
      author={J{\'o}nsson, Bjarne},
      author={Tarski, Alfred},
       title={Boolean algebras with operators. {P}art {I}},
        date={1951},
     journal={American Journal of Mathematics},
      volume={73},
       pages={891\ndash 939},
}

\bib{vonKarger95}{article}{
      author={Karger, Burghard~{von}},
       title={Temporal algebra},
        date={1995},
     journal={Mathematical Structures in Computer Science},
      volume={8},
       pages={277\ndash 320},
}

\bib{Kurucz2007}{article}{
      author={Kurucz, Agi},
      title={Combining modal logics},
      pages = {869\ndash 924},
      book={
      editor={Blackburn, Patrick},
      editor={{Van~Benthem}, Johan},
      editor={Wolter, Frank},
       title={Handbook of Modal Logic},
      series={Studies in Logic and Practical Reasoning},
   publisher={Elsevier},
        date={2007},
      volume={3},
      },
}

\bib{Menni2013}{article}{
      author={Menni, M.},
      author={Smith, C.},
       title={Modes of adjointness},
        date={2014},
     journal={Journal of Philosophical Logic},
      volume={43},
       pages={365\ndash 391},
      
}

\bib{Ore44}{article}{
      author={Ore, O.},
       title={Galois connexions},
        date={1944},
     journal={Transactions of American Mathematical Society},
      volume={55},
       pages={493\ndash 513},
}

\bib{OrlRadRew13}{book}{
      author={Or{\l}owska, Ewa},
      author={Radzikowska, Anna~Maria},
      author={Rewitzky, Ingrid},
       title={Discrete Duality: When Algebraic and Frame Semantics are Equivalent},
        date={2013},
        note={Manuscript},
}

\bib{OrlRew07}{article}{
      author={Or{\l}owska, Ewa},
      author={Rewitzky, Ingrid},
       title={Discrete dualities for {H}eyting algebras with operators},
        date={2007},
     journal={Fundamenta Informaticae},
      volume={81},
       pages={275\ndash 295},
}

\bib{RasSik68}{book}{
      author={Rasiowa, Helena},
      author={Sikorski, Roman},
       title={The Mathematics of Metamathematics},
     edition={2nd ed.},
   publisher={PWN-Polish Scientific Publishers},
     address={Warsaw},
        date={1968},
}

\bib{ReyZawad91}{article}{
      author={Reyes, Gonzaio E.},
	author={Zawadowski, Marek W.},
	 title={Formal systems for modal operators on locales},
  	date={1993},
	 journal={Studia Logica},
	 volume={52},
	 pages={595\ndash 613},
}

\bib{ReyZolfa91}{article}{
      author={Reyes, Gonzaio E.},
	author={Zolfaghari, Houman},
	 title={Topos-theoretic approaches to modality},
  	date={1991},
	 journal={Lecture Notes in Mathematics},
	 volume={1488},
	 pages={359\ndash 378},
}

\bib{SadDyc10}{article}{
      author={Sadrzadeh, Mehrnoosh},
      author={Dyckhoff, Roy},
       title={Positive logic with adjoint modalities: Proof theory, semantics, and reasoning about information},
        date={2010},
     journal={The Review of Symbolic Logic},
      volume={3},
       pages={351\ndash 373},
}

\bib{simpson1994proof}{thesis}{
      author={Simpson, Alex~K},
       title={The proof theory and semantics of intuitionistic modal logic},
        date={1994},
        type={Ph.D. Thesis},
        institution={University of Edinburgh, College of Science and Engineering, School of Informatics},
}

\bib{Venema2007}{article}{
      author={Venema, Y.},
      title={Algebras and coalgebras},
      pages = {331\ndash 426},
      book={
      editor={Blackburn, Patrick}, 
      editor={{Van~Benthem}, Johan},
      editor={Wolter, Frank},
       title={Handbook of Modal Logic},
      series={Studies in Logic and Practical Reasoning},
   publisher={Elsevier},
        date={2007},
      volume={3}
      },
}

\end{biblist}
\end{bibdiv}

\end{document}